\documentclass[a4paper,11pt,titlepage,twoside]{article}

\usepackage{graphicx}
\usepackage[T1]{fontenc} 
\usepackage[utf8]{inputenc}
\usepackage[english]{babel}
\usepackage{soul}
\usepackage{amsfonts}
\usepackage{amsmath}
\usepackage{amsthm}
\usepackage{amssymb}
\usepackage{mathrsfs}
\usepackage[top=5.5cm, bottom=3cm, left=4cm, right=4cm]{geometry}
\usepackage{setspace}
\usepackage{afterpage}
\usepackage{extarrows}
\usepackage{fancyhdr}
\usepackage{titlesec}
\usepackage{enumitem} \setlist{nosep}
\usepackage[pdftex,breaklinks,colorlinks,linkcolor=blue,
anchorcolor=blue]{hyperref}

\theoremstyle{definition}
\newtheorem{defin}{Definition}[section]
\theoremstyle{plain}
\newtheorem{theo}[defin]{Theorem}
\newtheorem{lem}[defin]{Lemma}
\newtheorem{pro}[defin]{Proposition}
\newtheorem{cor}[defin]{Corollary}
\theoremstyle{definition}
\newtheorem{exm}[defin]{Example}
\newtheorem{rem}[defin]{Remark}

\renewcommand{\O}{\Omega}
\newcommand{\D}{\mathfrak{D}}
\renewcommand{\H}{\mathcal{H}}

\newcommand{\B}{\mathcal{B}}
\newcommand{\dom}{\mathcal{D}}

\newcommand{\Up}{\Upsilon}
\renewcommand{\L}{\Lambda}

\newcommand{\n}[1]{\|#1\|}
\newcommand{\nor}{\|\cdot\|}

\renewcommand{\l}{\langle}
\renewcommand{\r}{\rangle}
\newcommand{\N}{\mathbb{N}}
\newcommand{\R}{\mathbb{R}}
\newcommand{\C}{\mathbb{C}}

\newcommand{\pint}{\l\cdot,\cdot\r}
\newcommand{\pin}[2]{\l#1 , #2\r}
\newcommand{\no}{\noindent}
\newcommand{\ol}{\overline}

\newcommand{\mez}{\frac{1}{2}}
\numberwithin{equation}{section}

\setlength{\headsep}{10pt}

\titleformat{\section}
{\normalfont\fillast \fontsize{12}{15}\scshape}{\thesection.}{0.8em}{}

\titleformat{\subsection}
{\normalfont\fillast \fontsize{11}{12}\scshape}{\thesubsection.}{0.8em}{}

\pagestyle{fancy}

\fancyhf{}

\fancyhead[CE]{{\footnotesize  ROSARIO CORSO}}
\fancyhead[CO]{ {\fontsize{7}{10}    \textsc{\uppercase{Sesquilinear forms associated to sequences on Hilbert spaces}}}}
\fancyhead[LE]{{\small \thepage}}
\fancyhead[RO]{{\small  \thepage}}

\begin{document}
	
\thispagestyle{plain}

\begin{center}
	\large
	{\uppercase{\bf Sesquilinear forms associated \\ to sequences on Hilbert spaces}} \\
	\vspace*{0.5cm}
	{\scshape{Rosario Corso}}
\end{center}

\normalsize 
\vspace*{1cm}	

\small 

\begin{minipage}{11.8cm}
	{\scshape Abstract.} 
	The possibility of defining sesquilinear forms starting from one or two sequences of elements of a Hilbert space is investigated. One can associate operators to these forms and in particular look for conditions to apply representation theorems of sesquilinear forms, such as Kato's theorems. \\
	The associated operators correspond to classical frame operators or weakly-defined multipliers in the bounded context. In general some properties of them, such as the invertibility and the resolvent set, are related to properties of the sesquilinear forms.  \\
	As an upshot of this approach new features of sequences (or pairs of sequences) which are semi-frames (or reproducing pairs) are obtained.
\end{minipage}

\vspace*{.5cm}

\begin{minipage}{11.8cm}
	{\scshape Keywords:} sesquilinear forms, representation theorems, frames, semi-frames, Bessel sequences, reproducing pairs, associated operators.
\end{minipage}

\vspace*{.5cm}

\begin{minipage}{11.8cm}
	{\scshape MSC (2010):} 42C15, 47A07, 47A05, 46C05. 
\end{minipage}

\vspace*{1cm}
\normalsize

\section{Introduction}

\no Let $\H$ be a Hilbert space with inner product $\pint$ and norm $\nor$. Given two sequences $\xi:=\{\xi_n\}$ and $\eta:=\{\eta_n\}$ of elements of $\H$, a sesquilinear form on a suitable domain $\D_1\times \D_2$ can be defined as
$$
\O_{\xi,\eta}(f,g)=\sum_{n=1}^{\infty} \pin{f}{\xi_n}\pin{\eta_n}{g}, \qquad f\in \D_1,g\in\D_2.
$$

\no Obviously, a particular case appears when both subspaces coincide with $\H$. Assuming that $\xi=\eta$, $\O_{\xi,\xi}$ is defined on $\H\times \H$ and it is bounded if and only if $\xi$ is a Bessel sequence. 
The case with different sequences includes the notion of {\it reproducing pair}, that was introduced in \cite{Speck_Bal_15,Speck_Bal_16} and studied also in \cite{AST,Anto_Tp}. In the discrete formulation, two sequences $\xi,\eta$ constitute a reproducing pair of $\H$ if $\O_{\xi,\eta}$ is defined on $\H\times \H$, is bounded and the operator $T_{\xi,\eta}$ associated to $\O_{\xi,\eta}$, i.e., 
$$
\O_{\xi,\eta}(f,g)=\pin{T_{\xi,\eta} f}{g}, \qquad \forall f,g\in \H,
$$
is invertible with bounded inverse. This leads to the following formulas, in {\it weak sense}, to express an element $f\in \H$
$$
f=\sum_{n=1}^{\infty}\pin{f}{{T_{\xi,\eta}^*}^{-1}\xi_n}\eta_n=\sum_{n=1}^{\infty}\pin{f}{T_{\xi,\eta}^{-1}\eta_n}\xi_n.
$$
In the case where $\xi$ is a Bessel sequence and $\xi=\eta$, $T_{\xi,\xi}$ is given by $T_{\xi,\xi} f= \sum_{n=1}^\infty \pin{f}{\xi_n}\xi_n$ in strong sense. If moreover $\xi$ is a frame, then $T_{\xi,\xi}$ is bijective and is called, as known, the frame operator of $\xi$.

In addition to reproducing pairs, other generalizations of the notion of frame have been introduced; for instance semi-frames \cite{Classif,Anto_Bal_1,Anto_Bal,Casazza_lower}. \\
For two general sequences $\xi,\eta$ the form $\O_{\xi,\eta}$ might be unbounded. Nevertheless, for unbounded sesquilinear forms $\O$ on a domain $\D_1\times \D_2$ several representation theorems through operators $T$,
$$
\O(f,g)=\pin{Tf}{g}, \qquad\forall f\in \dom(T)\subseteq \D_1, g\in \D_2,
$$
have been formulated (see \cite{RC_CT} where the notion of {\it solvable} form is developed). Our aim is to apply these theorems in the context of the sesquilinear forms associated to one or two sequences and study the associated operators. An analogous approach for one sequence was applied in \cite{Bag_sesq} in the framework of generalized Riesz systems. \\
We will consider in particular the following type of forms: {\it closed nonnegative} (studied by Kato \cite{Kato}), {\it $\lambda$-closed} where $\lambda \in \C$ (studied by McIntosh \cite{McIntosh70}) and solvable forms. The operators associated to these forms are closed, and also densely defined provided that both $\D_1$ and $\D_2$ are dense. In the first case they are self-adjoint and positive. In the second case  
their resolvent sets are always not empty.

This paper is structured as follows. We recall some notions on sesquilinear forms and on sequences in Section \ref{sec:prel}. Here we state the representation theorem for solvable forms and, in particular, for $\lambda$-closed forms. 

In Section \ref{sec:1seq} we start with defining the sesquilinear form $\O_\xi:=\O_{\xi,\xi}$ associated to a sequence $\xi$ on  
$\dom(\xi):=\left \{f\in \H: \sum_{n=1}^{\infty}|\pin{f}{\xi_n}|^2<\infty \right \}$, that is the greatest possible domain. This form is nonnegative and closed. Therefore, if it is densely defined, by Kato's theorems, it is represented by a nonnegative self-adjoint operator $T_\xi$, that is exactly $C_\xi^*C_\xi=|C_\xi|^2$, where $C_\xi$ is the analysis operator of $\xi$. Clearly, $T_\xi$ is an extension of the operator 
$
S_\xi f = \sum_{n=1}^\infty \pin{f}{\xi_n}\xi_n,
$
defined for $f\in \H$ such that the series converges in strong sense (called the 'frame-operator' of $\xi$ in some papers like \cite{Classif}). Differently from the bounded case (i.e., when $\xi$ is a Bessel sequence), $T_\xi$ may be different from $S_\xi$ (see Example \ref{count_exm_TS_xi}). We also give some characterization of $\xi$ in terms of $\O_\xi$ and $T_\xi$ in Propositions \ref{car_form_1_seq} and \ref{car_seq_T_xi}, respectively. \\
Furthermore, we consider also another sesquilinear form for a sequence $\xi$. More precisely, with $\Theta_\xi(\{c_n\},\{d_n\})=\sum_{i,j\in \N} c_i\ol{d_j}\pin{\xi_i}{\xi_j}$ one can define a nonnegative form $\Theta_\xi$  on $\dom(D_\xi)\times\dom(D_\xi)$, where $D_\xi$ denotes the synthesis operator of $\xi$. In contrast with $\Omega_\xi$, $\Theta_\xi$ is always densely defined; moreover, $\Theta_\xi$ is closable if and only if $\dom(\xi)$ is dense.

Section \ref{sec:2seq} deals with sesquilinear forms associated to two sequences. One of the main problems is the domain on which $\O_{\xi,\eta}$ can be defined. If the sequences are different then typically there does not exist the greatest domain. First, we analyze the bounded case: the operator that represents the form acts as $T_{\xi,\eta} f = \sum_{n=1}^\infty \pin{f}{\xi_n}\eta_n$ in  weak sense for $f\in \H$.  \\
In the general case, we define the form $\O_{\xi,\eta}$ on $\dom(\xi)\times \dom(\eta)$. Under this assumption we find in Theorem \ref{th_cns_0clos} that $\O_{\xi,\eta}$ is $0$-closed if and only if $\xi,\eta$ are lower semi-frames and $R(C_\xi) \dotplus R(C_\eta)^\perp= l_2$ (or equivalently $R(C_\eta) \dotplus R(C_\xi)^\perp= l_2$) holds, where $\dotplus$ stands for the direct sum of subspaces. If $\dom(\eta)$ is dense, the operator associated to this form is $C_\eta ^* C_\xi$, and it is invertible with bounded inverse if and only if the form is $0$-closed. As a consequence we recover reconstruction formulas in weak sense. Finally, the possibility of choosing different domains of $\O_{\xi,\eta}$ and existence of maximal domains is discussed.  

In Section \ref{sec:exm} we apply the obtained results in examples involving weighted Riesz basis or weighted Bessel sequences. 
In the last section we write two sequences $\xi,\eta$ as $\xi_n=V e_n$ and $\eta_n=Z e_n$ for a fixed orthonormal basis $\{e_n\}$ and for some operators $V,Z$. We analyze the relations between $V,Z$ and $\O_{\xi,\eta}$.

\section{Preliminaries}
\label{sec:prel}

Let $\H$ be a Hilbert space with inner product $\pint$ and norm $\nor$.  
We denote by $\dom(T),N(T),R(T),\rho(T)$ the domain, kernel, the range and the resolvent set of an operator $T$ from $\H_1$ into $\H_2$, respectively, where $\H_1,\H_2$ are Hilbert spaces. We indicate by $I$ the identity operator and by $\B(\H)$ the set of bounded operators, everywhere defined on $\H$. An operator $T$ is called {\it semi-bounded} if there exists $c>0$ such that $\n{Tf}\geq c \n{f}$ for all $f\in \dom(T)$.\\
For a complex sequence $\alpha=\{\alpha_n\}$ we set $\alpha^2:=\{\alpha_n^2\}$. 
Moreover $l_2(\alpha)$ stands for the Hilbert space of complex sequences $\{c_n\}$ satisfying $\sum_{n=1}^\infty |\alpha_n||c_n|^2 <\infty$. The norm of $\{c_n\}\in l_2(\alpha)$ is given by $(\sum_{n=1}^\infty |\alpha_n||c_n|^2)^\mez$. For simplicity, we use the classic notation $l_2$ for the space $l_2(\{1\})$. 

\subsection{Sesquilinear forms}

Basic notions on sesquilinear forms can be found in \cite[Ch. VI]{Kato}. We recall that if $\D_1,\D_2$ are subspaces of $\H$ and $\O$ is a sesquilinear form on $\D_1\times \D_2$ then, the {\it adjoint} $\O^*$ of $\O$ is defined on $\D_2\times \D_1$ as
$$
\O^*(\phi,\psi)=\ol{\O(\psi,\phi)}, \qquad  \phi\in \D_2, \psi\in \D_1.
$$
In the case where $\D:=\D_1=\D_2$, 
$\O$ on $\D\times \D$ is called 
\begin{itemize}
	\item {\it symmetric} if $\O=\O^*$;
	\item {\it semi-bounded} with {\it lower bound} $\gamma \in \R$ if $\O(f,f)\geq \gamma \n{f}^2$ for all $f \in \D$.
	\item {\it nonnegative} if $\O(f,f)\geq 0$ for all $f \in \D$ (in this case we use the symbol $\O\geq 0$).
\end{itemize}

\no A sesquilinear form $\O$ on $\D_1\times \D_2$ is said to be
\begin{itemize}
	\item {\it densely defined} if $\D_1, \D_2$ are dense (in $\H$);
	\item {\it bounded on $\H$} if for some $C> 0$, 
	$|\O(\phi,\psi)|\leq C \n{\phi}\n{\psi}$ for all $\phi\in \D_1,\psi \in \D_2$. If $\D_1=\D_2=\H$, then the {\it norm} of $\O$ is 
	$$\n{\O}:=\sup_{f,g\in \H\backslash{\{0\}}} \frac{|\O(f,g)|}{\n{f}\n{g}}.$$
\end{itemize} 

\no We denote by $N(\O):=\{f\in\D_1: \O(f,g)=0, \forall g\in\D_2\}$. Then $N(\O^*)=\{g\in\D_2: \O(f,g)=0, \forall f\in\D_1\}$. Moreover, we put $\iota(f,g):=\pin{f}{g}$ for all $f,g\in \H$.\\

Let $\O$ be a sesquilinear form on $\D_1\times \D_2$ with $\D_2$ dense. The well-defined operator $T$ on 
\begin{align}
\label{eq_dom_intro1}
\dom(T)=\{f \in \D_1:\exists h \in \H, \O(f,g)=\pin{h}{g}, \forall g \in \D_2\}
\end{align}
given by $Tf=h$, for all $f\in \dom(T)$ and $h$ as in (\ref{eq_dom_intro1}), is called the {\it operator associated} to $\O$. This operator is the greatest one satisfying $\dom(T)\subseteq \D_1$ and for which the representation $\O(f,g)=\pin{Tf}{g}$ with $f\in \dom(T),g\in \D_2$ holds. \\
It is not densely defined nor closed, in general. However, under some further conditions that we are going to introduce, these two properties are obtained.

\begin{defin}
\label{def_qclo}
A sesquilinear form $\O$ on $\D_1\times \D_2$ is called {\it q-closed} if there exist Hilbertian norms $\nor_1$ on $\D_1$ and $\nor_2$ on $\D_2$ such that
\begin{enumerate}[label={(\roman*)}]
	\item the embeddings $\D_1[\nor_1]\to \H$ and $\D_2[\nor_2]\to \H$ are continuous;
	\item there exists $\beta >0$ such that $|\O(f,g)|\leq \beta\n{f}_1\n{g}_2$, for all  $f\in \D_1,g \in \D_2$. 
\end{enumerate}
\end{defin}

\no The next definition is based by \cite{McIntosh70}, but here we do not assume that the subspaces are dense.

\begin{defin}
Let $\lambda \in \C$. A q-closed sesquilinear form $\O$ on $\D_1\times \D_2$ is called {\it $\lambda$-closed} if 
\begin{enumerate}[label={(\roman*)}]
	\item if $(\O-\lambda \iota)(f,g)=0$ for all $g \in \D_2$, then $f=0$, i.e., $N(\O-\lambda \iota)=\{0\}$;
	\item for every anti-linear continuous functional $\L$ on $\D_2[\nor_2]$ there exists $f\in \D_1$ such that $\L(g)=(\O-\lambda \iota)(f,g)$.
\end{enumerate}
\end{defin}

\begin{defin}
\label{def_solvable}
A q-closed sesquilinear form $\O$ on $\D_1\times \D_2$ is called {\it solvable} if there exists a bounded sesquilinear form $\Up$ on $\H\times \H$ such that $\O+\Up$ is $0$-closed. 
\end{defin}

Solvable (and in particular $\lambda$-closed) sesquilinear forms are generalizations of Kato's closed sectorial forms \cite[Ch. VI]{Kato}. In particular, a semi-bounded form $\O$ with lower bound $\gamma$ on $\D\times \D$ is {\it closed} if $\D$ is complete when endowed with the inner product $(\O-\gamma\iota) (f,g)$, $f,g\in \D$.

The original definition of solvable forms goes back to \cite{Tp_DB,RC_CT,Second} where $\D_1=\D_2$ (and dense) is always assumed. Note also that in Definition \ref{def_solvable} we have preferred to use the simple terminology 'solvable' instead of 'solvable with respect to an inner product' as in 	\cite{RC_CT,Second}. \\
However, the next results can be easily adapted from \cite[Theorems 4.6, 4.11]{RC_CT} and \cite[Theorem 2.5]{Second} (see also \cite[Proposition 2.1]{McIntosh70} and \cite[Theorem 7.2]{RC_CT}).

\begin{theo}
	\label{th_rapp_risol}
	Let $\O$ be a solvable sesquilinear form on $\D_1\times \D_2$ with $\D_2$ dense in $\H$ and $T$ its associated operator. The following statements hold.
	\begin{enumerate}[label=\emph{(\roman*)}]
		\item $\dom(T)$ is dense in $\D_1[\nor_1]$. If $\D_1$ is dense in $\H$, then also $\dom(T)$ is dense in $\H$. 
		\item Let $\Up$ be a bounded sesquilinear form and $B\in \B(\H)$ the bounded operator associated to $\Up$. Then $\O+\Up$ is $0$-closed if and only if $0\in \rho(T+B)$.
		In particular, $\O$ is $\lambda$-closed with $\lambda \in \C$ if and only if $\lambda \in \rho(T)$.
		\item $T$ is closed.
		\item $\O^*$ is solvable. More precisely, if $\Up$ is a bounded sesquilinear form, then $\O+\Up$ is $0$-closed if and only if $\O^*+\Up^*$ is $0$-closed.
		\item Assume that $\D_1$ is dense in $\H$. The operator associated to $\O^*$ is $T^*$.
		\item If $\D_1=\D_2$, then $\O$ is symmetric if and only if $T$ is self-adjoint.
	\end{enumerate}
\end{theo}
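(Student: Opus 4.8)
The plan is to make part (ii) the engine of the whole theorem and to derive the remaining statements from it. The starting observation is a reduction: if $\Up$ is bounded on $\H\times\H$ with associated operator $B\in\B(\H)$, then $\O+\Up$ is again q-closed with the same norms $\nor_1,\nor_2$ (only the constant $\beta$ grows, using continuity of the two embeddings), and its associated operator is exactly $T+B$ on $\dom(T+B)=\dom(T)$, since $\Up(f,g)=\pin{Bf}{g}$ is everywhere defined. Hence the equivalence in (ii) collapses to the single case $\Up=0$, namely \emph{$\O$ is $0$-closed if and only if $0\in\rho(T)$}; the $\lambda$-closed statement is then the instance $\Up=-\lambda\iota$, $B=-\lambda I$, for which ``$\O-\lambda\iota$ is $0$-closed'' means precisely that $\O$ is $\lambda$-closed, and $0\in\rho(T-\lambda I)$ means $\lambda\in\rho(T)$.

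To prove the core equivalence I would encode the form by a bounded operator between the relevant Hilbert spaces. Writing $\D_2^\times$ for the anti-dual of $\D_2[\nor_2]$, the bound $|\O(f,g)|\le\beta\n{f}_1\n{g}_2$ makes $\mathcal{O}\colon f\mapsto\O(f,\cdot)$ a bounded map from $\D_1[\nor_1]$ into $\D_2^\times$, and density of $\D_2$ in $\H$ gives a bounded injective $\kappa\colon\H\to\D_2^\times$, $h\mapsto\pin{h}{\cdot}$, with the defining property that $f\in\dom(T)$, $Tf=h$ is equivalent to $\mathcal{O}f=\kappa h$. Under this dictionary the two clauses of $0$-closedness say exactly that $\mathcal{O}$ is injective and onto all of $\D_2^\times$. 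For $0$-closed $\Rightarrow 0\in\rho(T)$: injectivity of $T$ is immediate from $N(\O)=\{0\}$; surjectivity holds because every $\kappa h$ lies in the range of $\mathcal{O}$; and, the form spaces being complete, the open mapping theorem turns the bijection $\mathcal{O}$ into a topological isomorphism, so that composing with the continuous embedding $\D_1[\nor_1]\to\H$ gives $\n{T^{-1}h}\le C_1\n{\mathcal{O}^{-1}}\n{\kappa}\n{h}$.

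The converse, $0\in\rho(T)\Rightarrow 0$-closed, is where I expect the real work, and it is the chief point to be adapted from \cite[Theorems 4.6, 4.11]{RC_CT} and \cite[Theorem 2.5]{Second}. Injectivity of the form is again free, but one must \emph{upgrade} surjectivity of $T$ onto $\H$ (i.e. $\mathcal{O}(\D_1)\supseteq\kappa(\H)$) to surjectivity of $\mathcal{O}$ onto the whole anti-dual $\D_2^\times$. Boundedness of $T^{-1}$ yields a two-sided estimate $\n{f}_1\le c\,\n{\mathcal{O}f}_{\D_2^\times}$ on $\dom(T)$, so $\mathcal{O}$ is bounded below with closed range; since that range contains $\kappa(\H)$, it suffices that $\kappa(\H)$ be dense in $\D_2^\times$, which is equivalent to the extended embedding $\widetilde{\D_2}\to\H$ being injective. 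Controlling these completions, an issue that is vacuous when $\D_1=\D_2$ is dense, is the genuine obstacle; I would isolate it as a lemma in the spirit of \cite[Proposition 2.1]{McIntosh70} and prove it in tandem with the density statement of (i), since the two are intertwined: once $\mathcal{O}$ is a topological isomorphism, $\dom(T)=\mathcal{O}^{-1}(\kappa(\H))$ is dense in $\D_1[\nor_1]$ precisely because $\kappa(\H)$ is dense in $\D_2^\times$, and density in $\H$ (when $\D_1$ is dense) follows by composing with the continuous embedding.

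With (ii) and (i) settled the rest is soft. For (iii), solvability gives a bounded $\Up$ with $\O+\Up$ $0$-closed, hence $0\in\rho(T+B)$ by (ii); a nonempty resolvent set forces $T+B$ to be closed, and $T=(T+B)-B$ is then closed because $B$ is bounded. For (iv) I would check directly that $\O^*$ is q-closed with $\nor_1,\nor_2$ interchanged and establish the adjoint-symmetry of $0$-closedness at the form level: the inf--sup (Babu\v{s}ka) conditions defining $0$-closedness for $\O$ on $\D_1\times\D_2$ are equivalent to the same conditions for $\O^*$ on $\D_2\times\D_1$, so taking $\Up^*$ as the regularizing form makes $\O^*$ solvable and yields the stated equivalence. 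Part (v) identifies $T^*$ as the operator associated to $\O^*$ when $\D_1$ is dense; I would deduce it from the definition of the Hilbert-space adjoint together with the representation $\O(f,g)=\pin{Tf}{g}$ valid on the core $\dom(T)$, as in \cite[Ch.~VI]{Kato}. Finally (vi) is the specialization $\D_1=\D_2$ of (v): if $\O=\O^*$ then $T$ and $T^*$ are the associated operators of one and the same form, hence $T=T^*$; conversely $T=T^*$ forces $\O$ and $\O^*$ to agree on the core $\dom(T)$, and continuity in the form norm propagates the equality to all of $\D_1\times\D_2$.
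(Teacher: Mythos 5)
The paper itself offers no proof of this theorem: it explicitly imports it ("can be easily adapted") from \cite[Theorems 4.6, 4.11]{RC_CT} and \cite[Theorem 2.5]{Second}. Your overall architecture is in fact the architecture of those references: reduce (ii) to the single equivalence ``$\O$ is $0$-closed iff $0\in\rho(T)$'' via the (correct) observation that the operator associated to $\O+\Up$ is $T+B$ on $\dom(T)$, encode the form as a bounded map $\mathcal{O}\colon\D_1[\nor_1]\to\D_2^\times$ with $\mathcal{O}f=\kappa(Tf)$ on $\dom(T)$, and derive (i), (iii)--(vi) afterwards. Your forward direction ($0$-closed $\Rightarrow$ $0\in\rho(T)$, via the open mapping theorem) is sound, as are (iii), (iv) (using Lemma \ref{lem_cns_0clos}) and (vi) modulo (v).

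The genuine gap is the converse direction of the core equivalence, and it is fatal as written: your argument never uses solvability, yet ``q-closed and $0\in\rho(T)$ imply $0$-closed'' is \emph{false} in general, so no such argument can work. The specific unjustified step is the claim that boundedness of $T^{-1}$ yields $\n{f}_1\leq c\,\n{\mathcal{O}f}_{\D_2^\times}$ on $\dom(T)$. Counterexample: let $A$ be positive, self-adjoint, unbounded, with $0\in\rho(A)$, and put $\O(f,g)=\pin{Af}{g}$ on $\dom(A)\times\dom(A)$ with $\nor_1=\nor_2=\n{A\,\cdot}$. This form is q-closed, its associated operator is $T=A$, so $0\in\rho(T)$; but $\sup_{\n{Ag}\leq 1}|\O(f,g)|=\n{A^{-1}Af}=\n{f}$, so your estimate would force $\n{Af}\leq c\n{f}$ on $\dom(A)$, which is absurd. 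Indeed this $\O$ is not $0$-closed (the $\nor_2$-continuous functionals $g\mapsto\pin{h}{Ag}$ with $h\notin\dom(A)$ are not of the form $\O(f,\cdot)$), and it is not solvable either, which is precisely why the theorem is not contradicted: solvability must enter the proof. The repair, which is what \cite{RC_CT} actually does, is a perturbation argument: pick a bounded $\Up_0$ with $\O+\Up_0$ $0$-closed, let $B_0$ be its operator and $\iota_1\colon\D_1[\nor_1]\to\H$ the embedding; then $\mathcal{O}_0:=\mathcal{O}+\kappa B_0\iota_1$ is a topological isomorphism onto $\D_2^\times$ by your forward argument, and $\mathcal{O}=\mathcal{O}_0(I-K)$ with $K:=\mathcal{O}_0^{-1}\kappa B_0\iota_1$ bounded on $\D_1[\nor_1]$. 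Injectivity of $T$ gives $N(I-K)=\{0\}$; surjectivity of $T$ gives $\kappa(\H)\subseteq R(\mathcal{O})$, hence $R(K)\subseteq\mathcal{O}_0^{-1}\kappa(\H)\subseteq R(I-K)$, which forces $R(I-K)=\D_1$ (write $u=(I-K)u+Ku$); so $I-K$, and therefore $\mathcal{O}$, is a topological isomorphism, i.e.\ $\O$ is $0$-closed. Two secondary points: the issue you single out as the ``genuine obstacle'' --- density of $\kappa(\H)$ in $\D_2^\times$ --- is actually automatic, since under the Riesz identification $\kappa$ is the adjoint of the injective embedding $\D_2[\nor_2]\to\H$ and hence has dense range (the Hilbertian norms here make $\D_1,\D_2$ complete); and in (v) your sketch only yields the easy inclusion of the operator associated to $\O^*$ into $T^*$, while the reverse inclusion (in particular $\dom(T^*)\subseteq\D_2$) requires the surjectivity--injectivity argument made available by (ii) and (iv).
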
	

In the next sections we will need the following criterion to establish if a given q-closed form is also $0$-closed. The proof is similar to the one of Lemma 5.1 of \cite{RC_CT}.

\begin{lem}
	\label{lem_cns_0clos}
	Let $\O$ be a q-closed sesquilinear form on $\D_1\times \D_2$. Let $\nor_1$ and $\nor_2$ be the norms on $\D_1$ and $\D_2$ according to Definition \ref{def_qclo}, respectively. The following statements are equivalent.
	\begin{enumerate}[label=\emph{(\roman*)}]
		\item $\O$ is $0$-closed;
		\item  $N(\O)=\{0\}$ and there exists $c_2>0$ such that
		$$
		c_2\n{g}_2\leq \sup_{\n{f}_1=1} |\O(f,g)| \qquad \forall g \in \D_2;
		$$
		\item $N(\O^*)=\{0\}$ and there exists $c_1>0$ such that
		$$
		c_1\n{f}_1\leq \sup_{\n{g}_2=1} |\O(f,g)| \qquad \forall f \in \D_1;
		$$
		\item there exist $c_1,c_2>0$ such that
		$$
		c_1\n{f}_1\leq \sup_{\n{g}_2=1} |\O(f,g)| \qquad \forall f \in \D_1,
		$$
		$$
		c_2\n{g}_2\leq \sup_{\n{f}_1=1} |\O(f,g)| \qquad \forall g \in \D_2.
		$$
	\end{enumerate}
\end{lem}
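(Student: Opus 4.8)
The plan is to reduce the four conditions to statements about a single bounded operator between the Hilbert spaces $\D_1[\nor_1]$ and $\D_2[\nor_2]$ and then to invoke standard operator theory. Write $\pint_1,\pint_2$ for the inner products inducing $\nor_1,\nor_2$, and use the completeness of $\D_1[\nor_1]$ and $\D_2[\nor_2]$ (which I take to be part of the meaning of \emph{Hilbertian norm} in Definition \ref{def_qclo}). Fixing $f\in\D_1$, condition (ii) of Definition \ref{def_qclo} shows that $g\mapsto \O(f,g)$ is an anti-linear continuous functional on $\D_2[\nor_2]$ of norm at most $\beta\n{f}_1$. The Riesz representation theorem then yields a unique $Af\in\D_2$ with $\O(f,g)=\pin{Af}{g}_2$ for all $g\in\D_2$, and $A\colon\D_1[\nor_1]\to\D_2[\nor_2]$ is linear and bounded. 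Denoting by $A^*$ its Hilbert-space adjoint, so that $\O(f,g)=\pin{f}{A^*g}_1$, I would record the identities
\begin{align*}
\sup_{\n{g}_2=1}|\O(f,g)|=\n{Af}_2,\qquad \sup_{\n{f}_1=1}|\O(f,g)|=\n{A^*g}_1,
\end{align*}
together with $N(\O)=N(A)$ and, using that $\D_1$ is the whole space $\D_1[\nor_1]$, $N(\O^*)=N(A^*)$.

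With these identities the statement becomes a fact about $A$. First I would translate $0$-closedness: condition (i) of $\lambda$-closedness with $\lambda=0$ is exactly $N(A)=\{0\}$, while condition (ii), read through Riesz, says that every $h\in\D_2[\nor_2]$ has the form $Af$, i.e.\ $R(A)=\D_2[\nor_2]$; hence $\O$ is $0$-closed if and only if $A$ is bijective. Next, since ``$A$ bounded below'' means precisely the existence of $c_1>0$ with $\n{Af}_2\geq c_1\n{f}_1$, and ``$A^*$ bounded below'' the existence of $c_2>0$ with $\n{A^*g}_1\geq c_2\n{g}_2$, the identities above turn statement (ii) into ``$A$ injective and $A^*$ bounded below'', statement (iii) into ``$A^*$ injective and $A$ bounded below'', and statement (iv) into ``$A$ and $A^*$ both bounded below''.

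It then remains to prove the purely operator-theoretic equivalence: for a bounded operator $A$ between Hilbert spaces, $A$ is bijective if and only if ($A$ injective and $A^*$ bounded below), if and only if ($A^*$ injective and $A$ bounded below), if and only if ($A$ and $A^*$ both bounded below). Here I would use three standard facts: being bounded below is equivalent to being injective with closed range (completeness of the domain supplies the closed range); the closed range theorem ($R(A)$ closed $\iff R(A^*)$ closed); and the orthogonality relation $\ol{R(A)}=N(A^*)^\perp$ (and symmetrically for $A^*$). For instance, if $A$ and $A^*$ are both bounded below then $A$ is injective with $R(A)$ closed and $N(A^*)=\{0\}$, whence $R(A)=\ol{R(A)}=N(A^*)^\perp=\D_2[\nor_2]$ and $A$ is bijective; conversely a bijective bounded $A$ has bounded inverse by the open mapping theorem, and $(A^{-1})^*=(A^*)^{-1}$ is bounded, so both $A,A^*$ are bounded below. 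The implications involving (ii) and (iii) follow by applying the same closed-range argument to $A^*$ and to $A$ respectively, the injectivity hypothesis replacing the lower bound that is otherwise derived.

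I expect the only genuinely delicate points to be bookkeeping ones: getting the (anti-)linearity conventions right in the Riesz step so that $A$ and $A^*$ are correctly identified, and justifying $N(\O^*)=N(A^*)$ (which uses only that $\D_1$ is dense in $\D_1[\nor_1]$, not any density in $\H$). The substantive content — the equivalence of the four operator conditions — is standard Hilbert-space theory, so once the reduction of the first two paragraphs is in place there is no real obstacle; the argument is, as the text notes, parallel to that of Lemma 5.1 of \cite{RC_CT}.
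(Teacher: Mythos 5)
Your proof is correct and follows essentially the route the paper intends: the paper itself gives no argument beyond citing Lemma 5.1 of \cite{RC_CT}, whose technique is exactly your reduction — represent the q-closed form by a bounded operator between the Hilbert spaces $\D_1[\nor_1]$ and $\D_2[\nor_2]$ via Riesz representation, identify $0$-closedness with bijectivity and the sup-conditions with lower bounds for $A$ and $A^*$, and conclude by the standard closed-range/orthogonality facts. Your adaptation to the two-domain setting (including the observation that $N(\O^*)=N(A^*)$ needs no density in $\H$) is precisely the "easy adaptation" the paper alludes to.
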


\subsection{Sequences}

For a sequence $\xi=\{\xi_n\}$ of $\H$ we denote by 
$$\dom(\xi):=\left \{f\in \H: \sum_{n=1}^{\infty}|\pin{f}{\xi_n}|^2<\infty \right \}.$$
If an element $h\in \H$ is the strong limit of $\sum_{n=1}^k \xi_n$, then we write $h=\sum_{n=1}^\infty \xi_n$; while if it is the weak limit of $\sum_{n=1}^k \xi_n$, i.e., $\pin{h}{g}=\sum_{n=1}^\infty \pin{\xi_n}{g}$ for all $g\in \H$, we write $h=(w)\sum_{n=1}^\infty \xi_n$.

We will use the abbreviation ONB to mean orthonormal basis. For the 
following notions we refer to \cite{Classif,Anto_Bal,Anto_Bal_1,Chris}.
A sequence $\xi$ is an {\it Bessel sequence} of $\H$ with upper bound $B>0$ if
\begin{equation}
\label{up_frame}
\sum_{n=1}^\infty |\pin{f}{\xi_n}|^2\leq B\n{f}^2, \qquad \forall f\in \H.
\end{equation}
In particular, if in (\ref{up_frame}) the left hand side is zero only for $f=0$, then $\xi$ is called {\it upper semi-frame}. \\
A sequence $\xi$ is a {\it lower semi-frame} of $\H$ with lower bound $A>0$ if 
$$
A\n{f}^2 \leq \sum_{n=1}^\infty |\pin{f}{\xi_n}|^2, \qquad \forall f\in \H.
$$ 
Note that the series on the right may diverge for some $f\in \H$. More precisely (see \cite[Proposition 4.1]{Classif}), the series is convergent for all $f\in \H$ if and only if $\xi$ is also a {\it frame}, i.e., there exists $A,B>0$ such that
$$
A\n{f}^2 \leq \sum_{n=1}^\infty |\pin{f}{\xi_n}|^2\leq B\n{f}^2, \qquad \forall f\in \H.
$$ 
A {\it Riesz basis} $\xi$  is a sequence satisfying for some $A,B>0$
$$
A \sum_{n=1}^\infty |c_n|^2 \leq \left \| \sum_{n=1}^\infty c_n \xi_n \right \|^2 \leq B\sum_{n=1}^\infty |c_n|^2, \qquad\forall \{c_n\}\in l_2.
$$
Instead, a sequence $\xi$ satisfying only the first inequality above, for $\{c_n\}\in l_2$ such that $\sum_{n=1}^\infty c_n \xi_n$ exists, is called {\it Riesz-Fischer sequence}.

Two sequences $\xi=\{\xi_n\}$ and $\eta=\{\eta_n\}$ are said to be {\it biorthogonal} if $\pin{\xi_n}{\eta_m}=\delta_{n,m}$, where $\delta_{n,m}$ is the Kronecker symbol.

There are three operators that are classically associated to a sequence $\xi$. 
The {\it analysis operator} $C_\xi :\dom(C_\xi )\subseteq \H\to l_2$ is given by $\dom(C_\xi )=\dom(\xi)$  and $C_\xi f=\{\pin{f}{\xi_n}\}$, for all $f\in \dom(C_\xi)$. The {\it synthesis operator} $D_\xi:\dom(D_\xi)\subseteq l_2 \to \H$ is given by 
$$\dom(D_\xi):=\left \{ \{c_n\}\in l_2:\sum_{n=1}^\infty c_n \xi_n \text{ exists in } \H\right \}$$ 
and $D_\xi\{c_n\}=\sum_{n=1}^\infty c_n \xi_n$, for $ \{c_n\}\in \dom(D_\xi)$. Finally let $S_\xi$ be the operator with  
$$\dom(S_\xi):=\left \{f\in \H: \sum_{n=1}^\infty \pin{f}{\xi_n}\xi_n \text{ exists in } \H\right \}$$
and $S_\xi f =\sum_{n=1}^\infty \pin{f}{\xi_n}\xi_n$, for $f\in \dom(S_\xi)$. The basic properties of these operators are listed below.

\begin{pro}[{\cite[Prop. 3.3]{Classif}}] The following statements hold.
	\label{pro_oper_1}
	\begin{enumerate}[label=\emph{(\roman*)}]
		\item $C_\xi=D_\xi^*$ and $C_\xi$ is closed.
		\item If $C_\xi$ is densely defined, then $D_\xi \subseteq C_\xi^*$ and $D_\xi$ is closable.
		\item $S_\xi=D_\xi C_\xi$.
	\end{enumerate}
\end{pro}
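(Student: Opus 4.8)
The plan is to prove the three statements in order, deriving (ii) and (iii) from the basic adjoint relation established in (i).

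For (i), I would first note that $D_\xi$ is densely defined, since every finitely supported sequence $\{c_n\}$ lies in $\dom(D_\xi)$ (the series reduces to a finite sum) and such sequences are dense in $l_2$; hence $D_\xi^*$ is well defined. To identify $D_\xi^*$, take $h\in\dom(D_\xi^*)$ with $D_\xi^*h=k=\{k_n\}$ and test the defining identity $\pin{D_\xi c}{h}=\pin{c}{k}$ against the canonical basis vectors $c=e_m\in\dom(D_\xi)$; since $D_\xi e_m=\xi_m$, this yields $k_m=\pin{h}{\xi_m}$. Thus $k=\{\pin{h}{\xi_n}\}\in l_2$, so $h\in\dom(\xi)=\dom(C_\xi)$ and $D_\xi^*h=C_\xi h$, i.e. $D_\xi^*\sub C_\xi$. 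For the reverse inclusion, take $h\in\dom(C_\xi)$; for any $c\in\dom(D_\xi)$ continuity of the inner product gives $\pin{D_\xi c}{h}=\sum_n c_n\pin{\xi_n}{h}=\pin{c}{C_\xi h}$, the middle series converging absolutely by Cauchy--Schwarz since both $c$ and $C_\xi h$ lie in $l_2$; hence $h\in\dom(D_\xi^*)$ with $D_\xi^*h=C_\xi h$. Therefore $C_\xi=D_\xi^*$, and $C_\xi$ is closed because every adjoint is closed.

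Statement (ii) then follows from the general theory of unbounded adjoints. Since $D_\xi$ is densely defined, it is closable if and only if its adjoint $D_\xi^*=C_\xi$ is densely defined, and in that case $\ol{D_\xi}=D_\xi^{**}$. Under the hypothesis that $C_\xi$ is densely defined we therefore obtain that $D_\xi$ is closable and $\ol{D_\xi}=D_\xi^{**}=(D_\xi^*)^*=C_\xi^*$; in particular $D_\xi\sub\ol{D_\xi}=C_\xi^*$.

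For (iii), I would compute the composition domain $\dom(D_\xi C_\xi)=\{f\in\dom(C_\xi): C_\xi f\in\dom(D_\xi)\}$ and compare it with $\dom(S_\xi)$. If $f$ lies in this domain then $C_\xi f=\{\pin{f}{\xi_n}\}$ and the condition $C_\xi f\in\dom(D_\xi)$ mean precisely that $\sum_n\pin{f}{\xi_n}\xi_n$ converges in $\H$, so $f\in\dom(S_\xi)$ and $D_\xi C_\xi f=S_\xi f$; this inclusion is immediate. The point requiring care is the reverse inclusion $\dom(S_\xi)\sub\dom(C_\xi)$, since a priori $f\in\dom(S_\xi)$ only asserts convergence of the synthesis series, not square-summability of the coefficients. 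The hard part is exactly this implication, and I would settle it by pairing the series with $f$ itself: if $h:=\sum_n\pin{f}{\xi_n}\xi_n$ converges, then by continuity of the inner product $\pin{h}{f}=\lim_k\sum_{n=1}^k|\pin{f}{\xi_n}|^2$, so the partial sums of $\sum_n|\pin{f}{\xi_n}|^2$ are bounded and $f\in\dom(\xi)=\dom(C_\xi)$. Consequently $C_\xi f\in\dom(D_\xi)$ and $f\in\dom(D_\xi C_\xi)$, giving $\dom(S_\xi)=\dom(D_\xi C_\xi)$ with equal actions, i.e. $S_\xi=D_\xi C_\xi$.
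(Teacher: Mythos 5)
Your proof is correct. The paper gives no proof of its own here (it imports the statement from \cite[Prop.~3.3]{Classif}), and your argument is essentially the standard one behind that citation: identifying $D_\xi^*$ on the canonical basis vectors and using Cauchy--Schwarz for (i), the closability criterion ``$T$ closable iff $T^*$ densely defined'' for (ii) (which in fact gives you slightly more, namely $\ol{D_\xi}=C_\xi^*$), and, for the only delicate point in (iii), pairing the convergent series $\sum_n \pin{f}{\xi_n}\xi_n$ with $f$ itself to deduce $f\in\dom(C_\xi)$.
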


\no If $C_\xi$ is densely defined then it may happen that $D_\xi = C_\xi^*$ (for instance if $\xi$ is a frame) or $D_\xi \neq C_\xi^*$ (like in the last example of \cite{Ole}). More precisely, the operator $C_\xi^*$ has domain
\begin{align*}
\dom(C_\xi^*)
&=\{\{c_n\}\in l_2:f\to\pin{C_\xi f}{\{c_n\}}_{l_2} \text{ is bounded on }\dom(\xi)\}\\
&=\left\{\{c_n\}\in l_2:f\to\sum_{n=1}^\infty  \pin{f}{\xi_n}\ol{c_n} \text{ is bounded on }\dom(\xi)\right \}.
\end{align*}

\section{Sesquilinear forms associated to a sequence}
\label{sec:1seq}

\no Now, consider the nonnegative sesquilinear form
$$
\O_\xi(f,g)=\sum_{n=1}^\infty \pin{f}{\xi_n}\pin{\xi_n}{g}.
$$
The largest domain $\dom(\O_\xi)$ on which $\O_\xi$ is defined is exactly $\dom(\xi)$. Then, clearly,
\begin{equation}
\label{O_xi_max}
\O_\xi(f,g)=\pin{C_\xi f}{C_\xi g}_2, \qquad\forall f,g\in \dom(\xi).
\end{equation}

\no Since $C_\xi$ is a closed operator, $\O_\xi$ is a closed nonnegative form. Basing on Proposition 4.1 of \cite{Classif} we can state also some characterizations of $\xi$ in terms of $\O_\xi$.

\begin{pro}
\label{car_form_1_seq}
		Let $\xi$ be a sequence of $\H$. The following statements hold.
	\begin{enumerate}[label=\emph{(\roman*)}]
		\item $\xi$ is complete if and only if $N(\O_\xi)=\{0\}$.
		\item $\xi$ is a Bessel sequence if and only if $\dom(\O_\xi)=\H$. 
		\item $\xi$ is a Bessel sequence with upper bound $B$ if and only if $\dom(\O_\xi)=\H$,  $\O_\xi$ is bounded and $\n{\O_\xi}\leq B$.
		\item $\xi$ is an upper semi-frame if and only if $\dom(\O_\xi)=\H$ and $N(\O_\xi)=\{0\}$.
		\item $\xi$ is a lower semi-frame with lower bound $A$ if and only if $\O_\xi$ is semi-bounded with lower bound $A$.
		\item $\xi$ is a frame if and only if $\dom(\O_\xi)=\H$ and $\O_\xi$ is semi-bounded with positive lower bound.
		\item $\xi$ is a frame if and only if $\dom(\O_\xi)=\H$, $N(\O_\xi)=\{0\}$ and for every $h\in \H$ there exists $f\in \H$ such that $\O_\xi(f,g)=\pin{h}{g}$ for all $g\in \H$.
		\item If $\xi$ is a Riesz-Fischer sequence, then for every $f',g'\in \H$ there exist $f,g\in \dom(\O_\xi)$ such that $\O_\xi(f,g)=\pin{f'}{g'}$.
		\item If $\xi$ is a Riesz basis, then $\dom(\O_\xi)=\H$ and for every $f',g'\in \H$ there exist $f,g\in \H$ such that $\O_\xi(f,g)=\pin{f'}{g'}$.
	\end{enumerate} 
\end{pro}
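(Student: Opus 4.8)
The plan is to reduce every item to a statement about the analysis operator $C_\xi$, using the identity \eqref{O_xi_max}, i.e. $\O_\xi(f,g)=\pin{C_\xi f}{C_\xi g}_2$, together with its two immediate consequences $\dom(\O_\xi)=\dom(\xi)=\dom(C_\xi)$, with $\O_\xi(f,f)=\n{C_\xi f}_2^2=\sum_{n=1}^\infty|\pin{f}{\xi_n}|^2$, and $N(\O_\xi)=N(C_\xi)$. From these, item (i) is immediate: $\xi$ is complete exactly when $\pin{f}{\xi_n}=0$ for all $n$ forces $f=0$, and since such an $f$ automatically lies in $\dom(\xi)$, this is precisely $N(C_\xi)=\{0\}=N(\O_\xi)$. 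For (ii) I would use that $C_\xi$ is closed (Proposition \ref{pro_oper_1}): the hypothesis $\dom(\O_\xi)=\H$ means $C_\xi$ is everywhere defined, so by the closed graph theorem it is bounded, which is the Bessel inequality, and the converse is trivial. Item (iv) is then the conjunction of (ii) with the kernel computation of (i), since an upper semi-frame is a Bessel sequence with $N(C_\xi)=\{0\}$.

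For the quantitative items I would pass to the associated operator $T_\xi=C_\xi^*C_\xi$, for which $\O_\xi(f,g)=\pin{T_\xi f}{g}$. In (iii), once $\dom(\O_\xi)=\H$ the operator $T_\xi$ is bounded, nonnegative and self-adjoint, so $\n{\O_\xi}=\n{T_\xi}=\n{C_\xi}^2$ coincides with the optimal Bessel bound $\sup_{f\neq 0}\O_\xi(f,f)/\n{f}^2$; hence $\n{\O_\xi}\le B$ is equivalent to the Bessel inequality with bound $B$. Item (v) needs one care point: the lower semi-frame inequality is asked for all $f\in\H$, whereas semi-boundedness of $\O_\xi$ only constrains $f\in\dom(\xi)$. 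But for $f\notin\dom(\xi)$ one has $\sum_{n=1}^\infty|\pin{f}{\xi_n}|^2=+\infty$, so $A\n{f}^2\le\sum_{n=1}^\infty|\pin{f}{\xi_n}|^2$ holds there automatically, while on $\dom(\xi)$ it reads literally $\O_\xi(f,f)\ge A\n{f}^2$; hence the two conditions are equivalent. Item (vi) is the conjunction of (ii) and (v), a frame being a Bessel lower semi-frame with positive lower bound.

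Item (vii) is where the representation point does real work. Via $T_\xi=C_\xi^*C_\xi$ the three hypotheses translate into: $T_\xi$ is bounded and everywhere defined (Bessel), $N(T_\xi)=N(\O_\xi)=\{0\}$, and $T_\xi$ is surjective, since solving $\O_\xi(f,\cdot)=\pin{h}{\cdot}$ on the whole $\H$ is exactly $T_\xi f=h$. A bounded bijection of $\H$ has bounded inverse, and a positive self-adjoint boundedly invertible operator satisfies $T_\xi\ge cI$ with $c=\n{T_\xi^{-1}}^{-1}>0$, which is the lower frame bound; together with the Bessel bound this gives a frame, and the converse is the standard boundedness and bounded invertibility of the frame operator. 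Finally, for (viii) and (ix) the key input is the characterization of Riesz--Fischer sequences through solvability of the moment problem, i.e. $R(C_\xi)=l_2$. Given $f',g'\in\H$ I would choose $\{c_n\},\{d_n\}\in l_2$ with $\sum_{n=1}^\infty c_n\ol{d_n}=\pin{f'}{g'}$ (for instance $c_1=\pin{f'}{g'}$, $d_1=1$ and all other entries zero), solve $C_\xi f=\{c_n\}$, $C_\xi g=\{d_n\}$, and read off $\O_\xi(f,g)=\pin{C_\xi f}{C_\xi g}_2=\sum_{n=1}^\infty c_n\ol{d_n}=\pin{f'}{g'}$, with $f,g\in\dom(\xi)=\dom(\O_\xi)$ automatically. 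For (ix) the sequence is in addition Bessel, so $\dom(\O_\xi)=\H$ by (ii) and the solutions $f,g$ lie in $\H$. The main obstacle I anticipate is not any single computation but pinning down the correct surjectivity characterization of Riesz--Fischer sequences and verifying the boundedness of $T_\xi^{-1}$ in (vii); both follow from standard closed-range and open-mapping arguments once \eqref{O_xi_max} has recast the statements in operator-theoretic form.
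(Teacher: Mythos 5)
Your proof is correct and takes essentially the same route as the paper: the paper obtains all nine items by translating properties of $\O_\xi$ into properties of the analysis operator $C_\xi$ (and of $T_\xi=C_\xi^*C_\xi$) via \eqref{O_xi_max} and then invoking Proposition 4.1 of \cite{Classif}, which contains precisely the operator-level facts (closedness of $C_\xi$, closed graph and bounded inverse arguments, solvability of the moment problem for Riesz--Fischer sequences) that you prove by hand.
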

%

\no Suppose that $\O_\xi$ is densely defined, i.e., $\dom(\xi)$ is dense (a sufficient condition for this property is given by \cite[Lemma 3.1]{Classif}). By Kato's first representation theorem \cite[Theorem VI.2.1]{Kato}, the operator $T_\xi$ associated to $\O_\xi$ is positive and self-adjoint. Moreover, by Kato's second representation theorem \cite[Theorem VI.2.23]{Kato} we have also that $\dom(\O_\xi)=\dom(T_\xi^\mez)$ and
\begin{align}
\label{op_ass_xi}
\O_\xi(f,g)&=\pin{T_\xi^\mez f}{T_\xi^\mez g}, \qquad \forall f, g\in \dom(\O_\xi). \nonumber
\end{align}
By (\ref{O_xi_max}) one can easily see that $T_\xi=C_\xi^*C_\xi=|C_\xi|^2$. Thus the domain of $T_\xi$ is 
$$
\dom(T_\xi)=\left \{f\in \H:g\mapsto \sum_{n=1}^\infty \pin{f}{\xi_n}\pin{\xi_n}{g} \text{ is bounded on } \dom(\xi) \right\} =\dom(|C_\xi|^2).
$$


\no Then $T_\xi$ is an extension of $S_\xi$.
It is a well-known fact that if $\xi$ is a Bessel sequence, then the operator associated to $\O_\xi$ is $S_\xi$, i.e., $T_\xi=S_\xi$. The following example shows however that, in the general case, $T_\xi$ does not always coincide with $S_\xi$.

\begin{exm}
	\label{count_exm_TS_xi}
	Let $\{e_n\}$ be an ONB of $\H$. For $f\in \H$ we denote by $f_n$ the coefficient of $f$ with respect to that basis.\\
	Let us define $\xi_1=e_1$ and $ \xi_n=n(e_n-e_{n-1})$ for $n\geq 2$. Then
$\dom(\xi)=\{f\in \H: \sum_{n=1}^\infty n^2|f_n-f_{n-1}|^2< \infty\}$. 
	For $k>1$ and $\{c_n\}\in l_2$
	\begin{align*}
	\sum_{n=1}^k c_n\xi_n 
	&= \sum_{n=1}^{k-1} (n c_n -(n+1)c_{n+1})e_n+kc_k e_k.
	\end{align*}
	Let $f\in \H$ be such that $f_n=\frac{1}{n}$, for $n\geq 1$. 
	Since
	\begin{align*}
	\sum_{n=1}^k \pin{f}{\xi_n}\xi_n &=\sum_{n=1}^{k-1} (n\pin{f}{\xi_n} -(n+1)\pin{f}{\xi_{n+1}})e_n +k\pin{f}{\xi_k} e_k  \\
	&=-\sum_{n=1}^{k-1}  \frac{1}{n(n-1)} e_n -\frac{k}{k-1} e_k,
	\end{align*}	
	$f\not \in \dom (S_\xi)$, but the functional $g\mapsto \sum_{n=1}^\infty c_n \pin{\xi_n}{g}$ is bounded for $g\in \dom(\xi)$, i.e., $f\in \dom(T_\xi)$. 
	
\end{exm}

Taking a sequence $\xi$, it is easy to define a new sequence $\xi'$ which is a lower semi-frame. Indeed, one can take $\{\xi_n'\}=\{e_1,\xi_1,\dots, e_n,\xi_n, \dots\}$ where $\{e_n\}$ is a ONB. Clearly, $\sum_{n=1}^{\infty} |\pin{f}{\xi_n'}|^2=\sum_{n=1}^{\infty} |\pin{f}{\xi_n}|^2+\n{f}^2$. Hence $\dom(C_{\xi'})=\dom(C_\xi)$ and $\n{C_{\xi'} f}\geq \n{f}$. Moreover, we have also $\dom(D_{\xi'})=\dom(D_{\xi})$, $\dom(S_{\xi})=\dom(S_{\xi'})$ e $\dom(T_{\xi})=\dom(T_{\xi'})$.

\begin{exm}
	The operator $T_\xi$ and $S_\xi$ may be different even if $\xi$ is a lower semi-frame. Indeed with the notation of the previous example, let  $\xi'=\{e_1,\xi_1,\\ \dots, e_n,\xi_n, \dots\}.$ 
	This sequence is then a lower semi-frame with $\dom(\xi)$ dense, but $T_{\xi'}$ is a proper extension of $S_{\xi'}$.
\end{exm}

\no The equality $T_\xi=S_\xi$ holds if $D_\xi$ is closed (indeed we have $D_\xi=C_\xi^*$ in that case).  We give now another characterization of $\xi$ which involves now the operator $T_\xi$.

\begin{pro}
	\label{car_seq_T_xi}
	Let $\xi$ be a sequence of $\H$ with $\dom(\xi)$ dense. The following statements hold.
	\begin{enumerate}[label=\emph{(\roman*)},ref=(\roman*)]
		\item \label{pro_comp} $\xi$ is complete if and only if $T_\xi$ is injective.
		\item \label{pro_Bess} $\xi$ is a Bessel sequence if and only if $\dom(T_\xi)=\H$ if and only if $T_\xi\in \B(\H)$.
		\item \label{pro_Bess2} $\xi$ is a Bessel sequence with bound $B$ if and only if $T_\xi\in \B(\H)$ and $\n{T_\xi}\leq B$.
		\item \label{pro_upper} $\xi$ is an upper semi-frame if and only if $T_\xi\in \B(\H)$ and $T_\xi$ is injective.
		\item \label{pro_lower} $\xi$ is a lower semi-frame with bound $A$ if and only if $0 \in \rho(T_\xi)$ and $\n{T_\xi^{-1}}\leq A$. 
		\item \label{pro_frame1} $\xi$ is a frame if and only if $\dom(T_\xi)=\H$ and $T_\xi$ is bijective.
		\item \label{pro_frame2} $\xi$ is a frame if and only if $\dom(T_\xi)=\H$ and $T_\xi$ is surjective.
		\item \label{pro_Riesz} $\xi$ is a Riesz basis if and only if $\dom(T_\xi)=\H$, $T_\xi$ is injective and $\{T_\xi^{-1}\xi_n\}$ is biorthogonal to $\xi$. 
	\end{enumerate} 
\end{pro}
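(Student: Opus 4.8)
The plan is to obtain every equivalence by translating the statements about the form $\O_\xi$ collected in Proposition \ref{car_form_1_seq} into statements about its representing operator. The bridge is the identity $T_\xi=C_\xi^*C_\xi=|C_\xi|^2$ together with Kato's second representation theorem, which give $\dom(\O_\xi)=\dom(\xi)=\dom(T_\xi^{1/2})$, the equality $\O_\xi(f,g)=\langle C_\xi f,C_\xi g\rangle_2$, and $\O_\xi(f,g)=\langle T_\xi f,g\rangle$ for $f\in\dom(T_\xi)$. Two tools will be used throughout: the closed graph theorem (since $T_\xi$ is self-adjoint, hence closed, $\dom(T_\xi)=\H$ forces $T_\xi\in\B(\H)$) and the spectral theorem for the positive self-adjoint operator $T_\xi$.

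First I would record the kernel computation $N(T_\xi)=N(T_\xi^{1/2})=N(|C_\xi|)=N(C_\xi)=N(\O_\xi)$; matched against Proposition \ref{car_form_1_seq}(i) this yields \ref{pro_comp}. For \ref{pro_Bess} I would use $\dom(\O_\xi)=\dom(T_\xi^{1/2})$, so that $\dom(\O_\xi)=\H$ is equivalent to $T_\xi^{1/2}\in\B(\H)$, hence to $T_\xi\in\B(\H)$, while $\dom(T_\xi)=\H\Leftrightarrow T_\xi\in\B(\H)$ is the closed graph theorem. Item \ref{pro_Bess2} follows because in the Bessel case $\O_\xi(f,g)=\langle T_\xi f,g\rangle$ holds on all of $\H\times\H$, so $\|\O_\xi\|=\|T_\xi\|$, and \ref{pro_upper} is merely the conjunction of \ref{pro_comp} and \ref{pro_Bess} placed against Proposition \ref{car_form_1_seq}(iv).

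The semi-boundedness items \ref{pro_lower}, \ref{pro_frame1}, \ref{pro_frame2} I would handle through the spectral theorem. The condition $\O_\xi(f,f)\geq A\|f\|^2$ on $\dom(T_\xi^{1/2})$ is equivalent to $T_\xi\geq A I$, i.e. to $0\in\rho(T_\xi)$ together with the stated bound on $\|T_\xi^{-1}\|$; combined with Proposition \ref{car_form_1_seq}(v) this gives \ref{pro_lower}. For \ref{pro_frame1}, a frame is exactly a Bessel lower semi-frame, so by \ref{pro_Bess} and \ref{pro_lower} it corresponds to $T_\xi\in\B(\H)$ with $0\in\rho(T_\xi)$, i.e. $\dom(T_\xi)=\H$ and $T_\xi$ bijective. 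For \ref{pro_frame2} I would note that a bounded self-adjoint surjection satisfies $N(T_\xi)=R(T_\xi)^\perp=\{0\}$, so surjectivity already forces bijectivity and \ref{pro_frame2} reduces to \ref{pro_frame1}.

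The delicate item is \ref{pro_Riesz}. In the forward direction a Riesz basis is in particular a frame, so \ref{pro_frame1} gives $\dom(T_\xi)=\H$ with $T_\xi$ bijective (hence injective), and I would invoke the standard fact that the canonical dual $\{T_\xi^{-1}\xi_n\}$ of a Riesz basis is biorthogonal to $\xi$. The converse is the main obstacle: from $\dom(T_\xi)=\H$ (so $\xi$ is Bessel), $T_\xi$ injective (so $\xi$ is complete), and the biorthogonality relation I must recover the lower Riesz bound. My plan is to set $u_n:=T_\xi^{-1/2}\xi_n$, well defined since $\xi_n\in R(T_\xi)\subseteq R(T_\xi^{1/2})$, and observe that biorthogonality becomes orthonormality, $\langle u_n,u_m\rangle=\langle T_\xi^{-1}\xi_n,\xi_m\rangle=\delta_{nm}$; the task is then to show that $\{u_n\}$ is an orthonormal basis and that $T_\xi^{1/2}$ is boundedly invertible, so that $\xi_n=T_\xi^{1/2}u_n$ exhibits $\xi$ as the image of an ONB under a bounded bijection, i.e. a Riesz basis. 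Equivalently one verifies that the biorthogonal system is itself Bessel and appeals to the characterization of Riesz bases as complete sequences admitting a biorthogonal Bessel sequence. Turning injectivity and biorthogonality into the quantitative lower bound — that is, excluding an injective but not boundedly invertible $T_\xi$ — is exactly where the argument needs the most care and where external frame-theoretic input such as \cite{Chris} is likely required.
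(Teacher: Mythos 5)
Your arguments for items (i)--(vii) are correct, and they are in fact more self-contained than the paper's own proof: the paper handles (ii), (iii), (iv), (vi), (vii) and (viii) simply by citing \cite[Prop.~4.3]{Classif}, declares (i) clear, and argues directly only for (v), using the same identity $\n{T_\xi^{1/2}f}^2=\O_\xi(f,f)$ on which your treatment of (v) rests. Your substitute derivations --- the kernel chain $N(T_\xi)=N(T_\xi^{1/2})=N(C_\xi)=N(\O_\xi)$ for (i), Kato's second representation theorem plus the closed graph theorem for (ii)--(iv), the spectral theorem and the self-adjointness relation $N(T_\xi)=R(T_\xi)^\perp$ for (v)--(vii) --- are all sound, so up to item (vii) your proof is a valid, more explicit alternative to the paper's proof by citation.

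The unresolved point is the converse of (viii), and the gap you flagged there is not one that more care can close: the implication, with ``injective'' read literally, is false. Let $\{e_n\}$ be an ONB and $\xi_n=n^{-1}e_n$. Then $\xi$ is Bessel and complete, $T_\xi$ is the diagonal operator with entries $n^{-2}$, so $\dom(T_\xi)=\H$ and $T_\xi$ is injective; moreover $\xi_n=T_\xi(ne_n)\in R(T_\xi)$, so $T_\xi^{-1}\xi_n=ne_n$ is well defined, and $\pin{T_\xi^{-1}\xi_n}{\xi_m}=(n/m)\delta_{nm}=\delta_{nm}$, so $\{T_\xi^{-1}\xi_n\}$ is biorthogonal to $\xi$. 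Nevertheless $\n{\xi_n}=n^{-1}\to 0$, so $\xi$ admits no lower Riesz bound and is not a Riesz basis. In this example your $u_n:=T_\xi^{-1/2}\xi_n=e_n$ do form an ONB, yet $T_\xi^{1/2}$ (the diagonal operator with entries $n^{-1}$) is not boundedly invertible, and the biorthogonal system $\{ne_n\}$ is not Bessel: both of the routes you propose collapse exactly at the step you singled out, and necessarily so. The statement, and your construction with it, does work once ``injective'' is strengthened to ``bijective'': then $0\in\rho(T_\xi)$, so $\xi$ is a frame by (vi), $T_\xi^{-1/2}\in\B(\H)$ makes $\{u_n\}$ a complete orthonormal system (your density argument for completeness of $\{u_n\}$ becomes valid because $T_\xi^{-1/2}$ is everywhere defined), and $\xi_n=T_\xi^{1/2}u_n$ with $T_\xi^{1/2}$ a bounded bijection exhibits $\xi$ as a Riesz basis; alternatively one may quote the classical fact that a frame possessing a biorthogonal sequence is a Riesz basis \cite{Chris}. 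Since the paper's proof of (viii) consists only of the citation to \cite[Prop.~4.3]{Classif}, there is no argument there to compare yours with; but the counterexample shows that bijectivity is genuinely needed, and any correct proof must use it.
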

\begin{proof}
	Point \ref{pro_comp} is clear. Points \ref{pro_Bess}, \ref{pro_Bess2}, \ref{pro_upper}, \ref{pro_frame1}, \ref{pro_frame2} and \ref{pro_Riesz} follows by \cite[Prop. 4.3]{Classif}. To prove \ref{pro_lower}, note that $\xi$ is a lower semi-frame if and only if $\O_\xi$ is semi-bounded with positive lower bound if and only if $0 \in \rho(T_\xi)$. Moreover, if $A>0$, $\n{T_\xi^\mez f}^2=\O_\xi(f,f)\geq A\n{f}^2$ if and only if $\n{T_\xi^{-1}}\leq A$.
\end{proof}

Assume that $\xi$ is a lower semi-frame with $\dom(\xi)$ dense. Thus $0\in \rho(T_{\xi})$. If $S_\xi=T_\xi$ then we obtain the following {\it reconstruction formula} in strong sense
\begin{align*}
f&=T_\xi T_\xi^{-1}f=\sum_{n=1}^{\infty}\pin{f}{T_\xi^{-1}\xi_n}\xi_n, \qquad \forall f\in \H.
\end{align*}
If $S_\xi \subsetneq T_\xi$, then we have only a formula in strong sense on $R(S_\xi)$
\begin{align*}
f&=\sum_{n=1}^{\infty}\pin{S_\xi^{-1}f}{\xi_n}\xi_n,  \qquad \forall f\in R(S_{\xi}).
\end{align*}
In general, the reconstruction formula in weak sense (\ref{weak_rec_1seq(a)}) below holds. Let $h\in \H$. For all $g\in \dom(\xi)$
\begin{equation}
\label{weak_rec_1seq(a)}
\pin{h}{g}=\pin{T_\xi T_\xi ^{-1}h}{g}=\sum_{n=1}^\infty \pin{T_\xi^{-1}h}{\xi_n}\pin{\xi_n}{g}=\sum_{n=1}^\infty \pin{h}{T_\xi^{-1}\xi_n}\pin{\xi_n}{g}.
\end{equation}
Note that $\{T_\xi^{-1}g_n\}$ is a Bessel sequence. Indeed, for every $f\in \H$, $T_\xi^{-1}f\in \D(C)$ and
\begin{align*}
\sum_{n=1}^\infty |\pin{f}{T_\xi^{-1}g_n}|^2=\|C_\xi T_\xi^{-1}f\|^2_2=\|U|C_\xi|^{-1}f\|^2\leq \|U|C_\xi|^{-1}\|^2\|f\|^2
\end{align*}
taking into account the polar decomposition of $C_\xi$, $C_\xi=U|C_\xi|$ with partial isometry $U$ and modulus $|C_\xi|$. Thus, we obtain from  \eqref{weak_rec_1seq(a)} the following reconstruction in strong sense
\begin{equation}
\label{weak_rec_1seq(b)}
g=\sum_{n=1}^\infty \pin{g}{\xi_n} T_\xi^{-1}\xi_n, \qquad \forall g\in \dom(\xi).
\end{equation}
Actually, a formula like \eqref{weak_rec_1seq(b)} involving a lower semi-frame $\xi$ and a Bessel sequence holds even if $\dom(\xi)$ is not dense (see \cite[Proposition 3.4]{Casazza_lower}). However, $\dom(\xi)$ must be dense to define $T_\xi$ and, following the case with frames, we can call $\{T_\xi^{-1}\xi_n\}$ in \eqref{weak_rec_1seq(b)} the {\it canonical dual} of the lower semi-frame $\xi$.\\

%

\no Another form that can be defined starting from a sequence $\xi=\{\xi_n\}$ of $\H$ is 
$$
\Theta_\xi(\{c_n\},\{d_n\})=\sum_{i,j\in \N} c_i\ol{d_j}\pin{\xi_i}{\xi_j}.
$$
This form is well-defined on $\dom(D_\xi)\times \dom(D_\xi)$. More precisely, if $\{c_n\},\{d_n\} \in \dom(D_\xi)$ then $\Theta_\xi(\{c_n\},\{d_n\})=\pin{D_\xi\{c_n\}}{D_\xi\{d_n\}}$. Basing on classic properties of closed nonnegative forms and on \cite[Prop. 4.2]{Classif}, we can formulate the next results, where we consider $\Theta_\xi$ always on the domain $\dom(\Theta_\xi):=\dom(D_\xi)$. 

\begin{pro}
	Let $\xi$ be a sequence of $\H$. The following statements hold.
	\begin{enumerate}[label=\emph{(\roman*)}]
		\item $\Theta_\xi$ is nonnegative and densely defined.
		\item $\Theta_\xi$ is closable if and only if $\dom(\xi)$ is dense.
		\item $\Theta_\xi$ is closed if and only if $D_\xi$ is closed if and only if $\dom(\xi)$ is dense and $D_\xi=C_\xi^*$. 
		\item If $\dom(\xi)$ is dense, then the closure $\ol{\Theta_\xi}$ of $\Theta_\xi$ is the sesquilinear form on $\dom(\ol{\Theta_\xi})=\dom(C_\xi^*)$ given by
		\begin{align*}
		\ol{\Theta_\xi}(\{c_n\},\{d_n\})&=\pin{C_\xi^*\{c_n\}}{C_\xi^*\{d_n\}}, 
		\qquad \forall \{c_n\},\{d_n\}\in \dom(C_\xi^*),
		\end{align*}
		and the operator associated to $\ol{\Theta_\xi}$ is $C_\xi C_\xi^*=:|C_\xi^*|^2$.
		\item $\xi$ is a Bessel sequence if and only if $\dom(\Theta_\xi)=l_2$.
		\item $\xi$ is a Riesz-Fischer sequence if and only if $\Theta_\xi$ is semi-bounded with positive lower bound.
		\item $\xi$ is a Riesz basis if and only if $\Theta_\xi$ is bounded and semi-bounded with positive lower bound.
		\item If $\xi$ is a frame then $\dom(\Theta_\xi)=l_2$ and for every $f',g'\in \H$ there exists $f,g\in \dom(\Theta_\xi)$ such that $\Theta_\xi(f,g)=\pin{f'}{g'}$.
	\end{enumerate}
\end{pro}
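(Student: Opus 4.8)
The plan is to recognize $\Theta_\xi$ as the quadratic form generated by the synthesis operator $D_\xi$. Indeed, for $\{c_n\},\{d_n\}\in\dom(D_\xi)$ continuity of the inner product gives $\Theta_\xi(\{c_n\},\{d_n\})=\pin{D_\xi\{c_n\}}{D_\xi\{d_n\}}$, exactly the identity already noted before the statement; in particular $\Theta_\xi(\{c_n\},\{c_n\})=\n{D_\xi\{c_n\}}^2\ge 0$, so $\Theta_\xi$ is nonnegative. Since $\dom(D_\xi)$ contains every finitely supported sequence, it is dense in $l_2$, which settles (i). Throughout I will use $D_\xi^*=C_\xi$ and the closedness of $C_\xi$ from Proposition \ref{pro_oper_1}, together with $\dom(C_\xi)=\dom(\xi)$.

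For (ii)--(iv) I would invoke the standard dictionary between a form of type $\pin{T\cdot}{T\cdot}$ and the operator $T=D_\xi$. The form norm $\n{\{c_n\}}^2+\Theta_\xi(\{c_n\},\{c_n\})$ is precisely the graph norm of $D_\xi$, so $\Theta_\xi$ is closed if and only if $D_\xi$ is closed; likewise the closability criterion for the nonnegative form ($\{c_n^{(k)}\}\to 0$ with $D_\xi\{c_n^{(k)}\}$ Cauchy forces the limit to vanish) is verbatim the closability of $D_\xi$. Now $D_\xi$ is densely defined, so it is closable exactly when its adjoint $D_\xi^*=C_\xi$ is densely defined, i.e. when $\dom(\xi)$ is dense; this gives (ii). For (iii), if $\Theta_\xi$ is closed then $D_\xi$ is closed, hence closable, so $\dom(\xi)$ is dense and $D_\xi=D_\xi^{**}=C_\xi^*$, while conversely $C_\xi^*$ is always closed; this yields the chain of equivalences. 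Finally, when $\dom(\xi)$ is dense the closure of $\Theta_\xi$ is the form generated by $\overline{D_\xi}=D_\xi^{**}=C_\xi^*$ on $\dom(C_\xi^*)$, and the operator associated to the form generated by the closed densely defined operator $C_\xi^*$ is $(C_\xi^*)^*C_\xi^*=C_\xi C_\xi^*$ (using $C_\xi^{**}=C_\xi$), which is (iv).

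Parts (v)--(viii) then translate the metric conditions on $\Theta_\xi$ into conditions on $D_\xi$ and appeal to \cite[Prop. 4.2]{Classif}. For (v), $\xi$ is Bessel precisely when $D_\xi$ is bounded and everywhere defined, so $\dom(\Theta_\xi)=l_2$ in that case; conversely $\dom(D_\xi)=l_2$ forces $D_\xi$ to be bounded by the uniform boundedness principle applied to the partial-sum operators $\{c_n\}\mapsto\sum_{n\le N}c_n\xi_n$. For (vi) the defining inequality of a Riesz--Fischer sequence, $A\n{\{c_n\}}^2\le\n{D_\xi\{c_n\}}^2=\Theta_\xi(\{c_n\},\{c_n\})$ on $\dom(D_\xi)$, is literally semi-boundedness with positive lower bound. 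For (vii), boundedness of $\Theta_\xi$ gives $\n{D_\xi\{c_n\}}\le\sqrt{B}\,\n{\{c_n\}}$ on $\dom(D_\xi)$, whence for any $\{c_n\}\in l_2$ the truncations lie in $\dom(D_\xi)$ and their $D_\xi$-images are Cauchy, so $\dom(D_\xi)=l_2$; combined with the lower bound from (vi) this is exactly the two-sided Riesz-basis estimate on all of $l_2$, and the converse is immediate. For (viii), a frame is in particular Bessel, so $\dom(\Theta_\xi)=l_2$ by (v), while $D_\xi$ maps $l_2$ onto $\H$; choosing $\{c_n\},\{d_n\}$ with $D_\xi\{c_n\}=f'$ and $D_\xi\{d_n\}=g'$ yields $\Theta_\xi(\{c_n\},\{d_n\})=\pin{f'}{g'}$.

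The main obstacle is not any single computation but making the form--operator correspondence airtight in the non-closed case: in particular, pinning down that the closure $\overline{\Theta_\xi}$ lives exactly on $\dom(C_\xi^*)$ and not merely on some domain containing $\dom(D_\xi)$, which hinges on the identification $\overline{D_\xi}=C_\xi^*$ and hence crucially on the density of $\dom(\xi)$. Once that identification is in place, all eight items follow from the reductions above.
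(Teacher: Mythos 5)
Your proof is correct and takes essentially the same approach as the paper: the paper gives no separate argument, stating only that the proposition follows from ``classic properties of closed nonnegative forms'' applied to $\Theta_\xi(\{c_n\},\{d_n\})=\pin{D_\xi\{c_n\}}{D_\xi\{d_n\}}$ together with \cite[Prop.\ 4.2]{Classif}, which is exactly the dictionary you spell out (closability/closedness/closure of the form $\pin{T\cdot}{T\cdot}$ versus the operator $T=D_\xi$, the identifications $D_\xi^*=C_\xi$, $\ol{D_\xi}=C_\xi^*$, associated operator $C_\xi C_\xi^*$, and the translation of Bessel, Riesz--Fischer, Riesz basis and frame conditions into properties of $D_\xi$). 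Your filled-in details are sound, so there is nothing to flag.
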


\subsection{Lower semi-frames as frames in a different Hilbert space}

\no We conclude this section noting that lower semi-frames are frames in some Hilbert space continuously embedded into $\H$.

\begin{pro}
	\label{pro_lower_frame}
	Let $\xi$ be a sequence of $\H$. The following statements are equivalent.
	\begin{enumerate}[label=\emph{(\roman*)}]
		\item $\xi$ is a lower semi-frame;
		\item there exists a inner product $\pint_+$ inducing a norm $\nor_+$ on $\dom(\xi)$ such that $\dom(\xi)[\nor_+]$ is complete and, for some $\alpha,A,B>0$, 
		$$\alpha \n{f} \leq \n{f}_+ \qquad \text{and}$$ 
		$$
		A\n{f}_+^2 \leq \sum_{n=1}^{\infty}|\pin{f}{\xi_n}|^2 \leq B\n{f}_+^2, \qquad \forall f\in \dom(\xi);
		$$
		\item for all inner product $\pint_+$ inducing a norm $\nor_+$ on $\dom(\xi)$ such that $\dom(\xi)[\nor_+]$ is complete and $\alpha \n{f} \leq \n{f}_+$ for some $\alpha >0$, there exist $A,B>0$, such that
		\begin{equation}
		\label{eq_quasi_frame}
			A\n{f}_+^2 \leq \sum_{n=1}^{\infty}|\pin{f}{\xi_n}|^2 \leq B\n{f}_+^2, \qquad \forall f\in \dom(\xi).
		\end{equation}
	\end{enumerate}
\end{pro}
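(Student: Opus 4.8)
The plan is to prove the cyclic chain $(1)\Rightarrow(3)\Rightarrow(2)\Rightarrow(1)$, putting almost all the work into $(1)\Rightarrow(3)$. Two of the implications are immediate. For $(2)\Rightarrow(1)$ I would combine the hypotheses: for $f\in\dom(\xi)$ one has $\sum_{n=1}^\infty|\pin{f}{\xi_n}|^2\geq A\n{f}_+^2\geq A\alpha^2\n{f}^2$, and for $f\notin\dom(\xi)$ the series diverges, so $A\alpha^2\n{f}^2\leq\sum_{n=1}^\infty|\pin{f}{\xi_n}|^2$ holds for all $f\in\H$ and $\xi$ is a lower semi-frame. For $(3)\Rightarrow(2)$ I would merely exhibit one admissible inner product, the graph inner product $\pin{f}{g}_+:=\pin{f}{g}+\pin{C_\xi f}{C_\xi g}_2$; since $C_\xi$ is closed (Proposition \ref{pro_oper_1}) the induced norm is complete on $\dom(\xi)$, and $\n{f}\leq\n{f}_+$, so (3) applies to it and yields (2).

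For $(1)\Rightarrow(3)$, assume $\xi$ is a lower semi-frame with lower bound $A_0$ and let $\pint_+$ be any inner product as in (3). Alongside $\nor_+$ I would introduce the graph norm $\n{f}_{C_\xi}^2:=\n{f}^2+\sum_{n=1}^\infty|\pin{f}{\xi_n}|^2$, which is complete on $\dom(\xi)$ because $C_\xi$ is closed. The crucial point is that $\nor_+$ and $\nor_{C_\xi}$ are two complete norms on the same vector space $\dom(\xi)$, both dominating the ambient norm $\nor$ (the first by hypothesis, $\n{f}\leq\alpha^{-1}\n{f}_+$; the second trivially, $\n{f}\leq\n{f}_{C_\xi}$). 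Hence the identity map between $\dom(\xi)[\nor_+]$ and $\dom(\xi)[\nor_{C_\xi}]$ has closed graph: if $f_k\to f$ in one norm and $f_k\to g$ in the other, then $f_k$ tends to both $f$ and $g$ in $\nor$, so $f=g$. Applying the closed graph theorem in each direction gives the equivalence $c_1\n{f}_{C_\xi}\leq\n{f}_+\leq c_2\n{f}_{C_\xi}$ for some $c_1,c_2>0$.

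It then remains to compare $\nor_{C_\xi}$ with the frame sum. The lower bound yields $\n{f}^2\leq A_0^{-1}\sum_{n=1}^\infty|\pin{f}{\xi_n}|^2$, so
$$\sum_{n=1}^\infty|\pin{f}{\xi_n}|^2\ \leq\ \n{f}_{C_\xi}^2\ \leq\ (1+A_0^{-1})\sum_{n=1}^\infty|\pin{f}{\xi_n}|^2;$$
chaining these comparisons with $\nor_+\approx\nor_{C_\xi}$ produces $A,B>0$ with $A\n{f}_+^2\leq\sum_{n=1}^\infty|\pin{f}{\xi_n}|^2\leq B\n{f}_+^2$, which is exactly \eqref{eq_quasi_frame}. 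I expect the main obstacle to be the upper frame bound: contrary to the lower bound it is not available pointwise, and the only way to produce it is through completeness and the closed graph theorem. The lower semi-frame hypothesis enters precisely here, making the graph norm $\nor_{C_\xi}$ (hence $\nor_+$) comparable to the frame sum and thereby converting the one-sided semi-frame estimate into a two-sided frame estimate in the new norm.
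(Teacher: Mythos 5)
Your proof is correct and takes essentially the same approach as the paper's: the key tool in both is the closed graph theorem, giving that any two complete Hilbertian norms on $\dom(\xi)$ dominating $\nor$ are equivalent, combined with the observation that the lower semi-frame bound makes the frame sum (equivalently, your graph norm of $C_\xi$) one such norm. The only differences are cosmetic: you run the cycle $(1)\Rightarrow(3)\Rightarrow(2)\Rightarrow(1)$ instead of the paper's $(i)\Rightarrow(ii)\Rightarrow(iii)\Rightarrow(i)$, and you spell out explicitly the steps (completeness of the witness norm, the closed-graph argument, and the chaining of constants) that the paper leaves terse.
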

\begin{proof}
	(i)$\Rightarrow$(ii) It is sufficient to take $\n{f}_+=(\sum_{n=1}^{\infty}|\pin{f}{\xi_n}|^2)^\mez$ for $f\in \dom(\xi)$.\\
	(ii)$\Rightarrow$(iii) By the closed graph theorem all norm which turn $\dom(\xi)$ into a Hilbert space continuously embedded into $\H$ are equivalent. \\
	(iii)$\Rightarrow$(i) The assertion follows easily since a norm $\nor_+$ satisfying (\ref{eq_quasi_frame}) is equivalent to the norm $f\to \left (\sum_{n=1}^{\infty}|\pin{f}{\xi_n}|^2 \right )^\mez$.
\end{proof}

\no Let $\xi$ be a lower semi-frame and $\pint_+$ be a inner product that makes $\dom(\xi)$ into a complete space (when we write $\dom(\xi)$ here we mean that it is endowed with this inner product). For every $n\in \N$ and $f\in \dom(\xi)$, $f \mapsto \pin{f}{\xi_n}$ defines a bounded functional on $\dom(\xi)$. By Riesz's Lemma there exists a sequence $\xi'=\{\xi_n'\}$ in $\dom(\xi)$ such that
\begin{equation}
\label{Riesz_qrp}
\pin{f}{\xi_n}=\pin{f}{\xi_n'}_+,
\end{equation}
for all $f\in \D$. Hence, by Proposition \ref{pro_lower_frame}, $\xi'$ is a frame of $\dom(\xi)$.\\

\no Now, assume that $\phi$ is a frame of $\dom(\xi)$. 
A natural question arises: does there exist a lower semi-frame $\xi$ of $\H$  such that $\phi$ is the frame constructed from $\xi$ in the described way?
To answer this question, we note that $\pint_+$ is a positive closed sesquilinear form on $\D$. By Kato's  representation theorems there exists a positive self-adjoint operator $R$  such that $\dom(R)\subseteq \dom(\xi)$, $\dom(R^\mez)=\dom(\xi)$, $0\in \rho(R)$ and 
$$
\pin{f}{g}_+=\pin{R^\mez f}{R^\mez g}, \qquad \forall f,g \in \dom(\xi),
$$
\begin{equation}
\label{def_R}
\pin{f}{g}_+=\pin{f}{Rg}, \qquad \forall f \in \dom(\xi),g\in \dom(R).
\end{equation}
By (\ref{Riesz_qrp}) we have $\xi_n=R\xi_n'$ for all $n\in \N$. Then we can state the following.

\begin{pro}
	Let $\phi$ be a frame of $\dom(\xi)[\pint_+]$.
	\begin{enumerate}[label=\emph{(\roman*)}]
		\item There exists a lower semi-frame $\xi=\{\xi_n\}$ on $\dom(\xi)$ such that $\phi=\xi'$ if, and only if, $\phi_n\in \dom(R)$ for all $n\in \N$.
		\item If $\phi=\xi'$ for some lower semi-frame  $\xi=\{\xi_n\}$ on $\dom(\xi)$, then $\xi_n=R\phi_n$ for all $n\in\N$.
	\end{enumerate}
\end{pro}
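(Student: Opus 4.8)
The plan is to treat both implications through the single candidate $\psi_n:=R\phi_n$, and to isolate the identification of the domain as the one genuinely delicate point.

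First I would dispose of (ii) together with the necessity in (i). Suppose $\phi=\psi'$ for some lower semi-frame $\psi=\{\psi_n\}$ whose associated domain, equipped with $\pint_+$, is $\dom(\xi)[\pint_+]$. Then the computation made just before the statement (the one producing $\xi_n=R\xi_n'$ from \eqref{Riesz_qrp}) applies to $\psi$. Writing $\pin{f}{\psi_n}=\pin{f}{\phi_n}_+$ for $f\in\dom(\xi)$ and using that $R^\mez$ is isometric from $\dom(\xi)[\pint_+]$ onto $\H$ (since $\pin{f}{g}_+=\pin{R^\mez f}{R^\mez g}$ and $R^{-\mez}\in\B(\H)$), one puts $u=R^\mez f$ and obtains $\pin{u}{R^{-\mez}\psi_n}=\pin{u}{R^\mez\psi_n'}$ for all $u\in\H$, hence $R^{-\mez}\psi_n=R^\mez\psi_n'$. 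Thus $\psi_n'=R^{-1}\psi_n\in\dom(R)$, i.e. $\phi_n=\psi_n'\in\dom(R)$ and $\psi_n=R\phi_n$. This proves (ii) and forces $\phi_n\in\dom(R)$.

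For the converse in (i) I would set $\psi_n:=R\phi_n$ (well defined as $\phi_n\in\dom(R)$) and verify three things. That $\psi'=\phi$ is immediate from \eqref{def_R}: for $f\in\dom(\xi)$, $\pin{f}{\psi_n}=\pin{f}{R\phi_n}=\pin{f}{\phi_n}_+$. Next, because $\phi$ is a frame of $\dom(\xi)[\pint_+]$ with bounds $A_\phi,B_\phi$,
\[
A_\phi\n{f}_+^2\le\sum_{n}|\pin{f}{\psi_n}|^2=\sum_n|\pin{f}{\phi_n}_+|^2\le B_\phi\n{f}_+^2, \qquad f\in\dom(\xi),
\]
so that $\dom(\xi)\subseteq\dom(\psi)$ and the frame inequalities hold on $\dom(\xi)$ in the norm $\nor_+$. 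If in addition $\dom(\psi)=\dom(\xi)$, then $\pint_+$ is a complete, continuously embedded inner product on the whole of $\dom(\psi)$ meeting the bounds above, and Proposition~\ref{pro_lower_frame} ((ii)$\Rightarrow$(i)) yields at once that $\psi$ is a lower semi-frame, closing the proof.

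Hence the only real obstacle is the inclusion $\dom(\psi)\subseteq\dom(\xi)$. My approach would be to transport it to $\H$ through the unitary $R^\mez\colon\dom(\xi)[\pint_+]\to\H$: setting $\chi_n:=R^\mez\phi_n$, the sequence $\chi$ is a frame of $\H$ (with canonical dual $\tilde\chi_n=R^\mez\tilde\phi_n$) and $\psi_n=R^\mez\chi_n$. Writing $B:=R^\mez$, the claim becomes the abstract statement that, for a frame $\chi$ of $\H$ and a positive self-adjoint $B$ with $0\in\rho(B)$ and $\chi_n\in\dom(B)$, the analysis domain of $\{B\chi_n\}$ equals $\dom(B)$. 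On $\dom(B)$ one has $C_\psi=C_\chi B$, and since $C_\psi$ is closed (Proposition~\ref{pro_oper_1}(i)) the frame lower bound gives $\sqrt{A_\chi}\,\n{Bf}\le\n{C_\psi f}_2$ there. Now take $f\in\dom(\psi)$ and put $Sf:=\sum_n\pin{f}{\psi_n}\tilde\chi_n\in\H$ (the series converges, $\tilde\chi$ being Bessel and $\{\pin{f}{\psi_n}\}\in l_2$); I would prove $\pin{Sf}{u}=\pin{f}{Bu}$ for every $u\in\dom(B)$, whereupon self-adjointness of $B$ gives $f\in\dom(B^*)=\dom(B)=\dom(\xi)$. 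Using $Bu=\sum_n\pin{u}{\psi_n}\tilde\chi_n$ (valid on $\dom(B)$ via $C_\chi Bu=C_\psi u$ and reconstruction), both sides become double series in the Hermitian array $\pin{\tilde\chi_l}{B\chi_n}$, and the equality is a rearrangement of these. The hard part is exactly the justification of this interchange: the cancellation is transparent when $\chi$ is orthonormal (the array is Hermitian and Fubini applies directly), but for a general frame one must secure absolute convergence of the double sum, equivalently that $\dom(B)$ is a core for the closed operator $C_\psi$. A fallback is a spectral truncation: with $f_k=E([\delta,k])f$ (where $\sigma(B)\subseteq[\delta,\infty)$) one has $f_k\in\dom(B)$, $f_k\to f$ and $\sqrt{A_\chi}\,\n{Bf_k}\le\n{C_\psi f_k}_2$, so a uniform bound on $\n{C_\psi f_k}_2$ would again force $f\in\dom(B)$; producing that uniform bound is the same essential difficulty.
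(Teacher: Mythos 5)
Your handling of (ii), of the necessity half of (i), and of the candidate $\psi_n:=R\phi_n$ --- the identity $\pin{f}{\psi_n}=\pin{f}{\phi_n}_+$ via \eqref{def_R} and the transfer of the frame bounds to $A_\phi\n{f}_+^2\le\sum_n|\pin{f}{\psi_n}|^2\le B_\phi\n{f}_+^2$ on $\dom(\xi)$ --- is correct, and it is in substance the paper's entire proof: the paper states the proposition as an immediate consequence of $\xi_n=R\xi_n'$, which is just the definition of the operator associated to the closed form $\pint_+$ applied to \eqref{Riesz_qrp}; your route through the unitary $R^\mez\colon\dom(\xi)[\pint_+]\to\H$ is an equivalent formulation. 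The obstacle you then isolate, namely whether the analysis domain of $\psi$ can exceed $\dom(\xi)$, is never raised by the paper: it implicitly reads ``lower semi-frame on $\dom(\xi)$'' as the validity of the two inequalities above on the given space, and under that reading your proof is already finished before your last third begins.

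That last third, however, is not merely incomplete; the claim it aims at is false, so no refinement of the rearrangement or truncation arguments can close it. A counterexample (which is orthonormal, contradicting your remark that the ONB case is ``transparent'' via Fubini): let $\H=l_2$ over $\N_0$ with ONB $\{v_i\}$, let $Bv_i=(i+1)v_i$ (positive, self-adjoint, $B\geq 1$), and let $\chi$ be the discrete Haar system $h_{j,k}=2^{-j/2}\bigl(\sum_{i\in L_{j,k}}v_i-\sum_{i\in R_{j,k}}v_i\bigr)$, $j\ge 1$, $k\ge 0$, where $L_{j,k}$ and $R_{j,k}$ are the two halves of the dyadic block $\{2^jk,\dots,2^j(k+1)-1\}$. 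Then $\{h_{j,k}\}$ is an ONB of $\H$ consisting of finitely supported vectors, hence contained in $\dom(B)$, while $f^*:=\sum_i(i+1)^{-1}v_i\in\H\setminus\dom(B)$ satisfies $\pin{f^*}{Bh_{j,k}}=2^{-j/2}\,(|L_{j,k}|-|R_{j,k}|)=0$ for all $j,k$, because the weights of $B$ exactly cancel those of $f^*$. So the analysis domain of $\psi=\{Bh_{j,k}\}$ strictly contains $\dom(B)$, and $\psi$ is not even a lower semi-frame of $\H$. This also shows why your spectral-truncation fallback must fail: for $f^*_N=\sum_{i\le N}(i+1)^{-1}v_i$ one has $\n{C_\psi f^*_N}_2=\n{Bf^*_N}\to\infty$ although $C_\psi f^*=0$; closedness of $C_\psi$ is not violated because $C_\psi f^*_N$ simply does not converge. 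Worse, feeding this example back into the proposition (take $R=B^2$, $\pin{f}{g}_+=\pin{Bf}{Bg}$, $\phi_{j,k}=B^{-1}h_{j,k}$, an ONB of $\dom(\xi)[\pint_+]$ contained in $\dom(R)$) and invoking your part (ii), any admissible $\xi$ would have to be $\{Bh_{j,k}\}$; hence under your strong reading of ``lower semi-frame on $\dom(\xi)$'' (analysis domain equal to the given space) statement (i) is itself false. The gap you flagged is therefore real but unfillable: the proposition stands only under the weaker reading, and in that case your proof should simply stop after the verification of the two frame inequalities on $\dom(\xi)$, with the entire last third deleted.
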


As an application, we show two particular ways to construct lower semi-frames.

\begin{exm}
	\begin{enumerate}[label={(\roman*)}]
	\item
	Let $S$ be a closed operator on $\H$ with dense domain $\D$, and let $\pin{f}{g}_S=\pin{f}{g}+\pin{Sf}{Sg}$, $f,g\in \D$. Then, $\D:=\D[\pint_S]$ is a Hilbert space continuously embedded in $\H$. The operator associated to $\pint_S$ is $I+S^*S$; hence if $\{e_n\}$ is an ONB of $\D$, contained in $D(|S|^2)$, then $\{(I+S^*S)e_n\}$ is a lower semi-frame of $\H$ on $\D$.
	\item
	A slight different argument leads to another example. Assume also that $0\in \rho(S)$ then $\{f|g\}_S=\pin{Sf}{Sg}$, $f,g\in \D$, is a inner product inducing the same topology of $\D$. The associated operator to $\{\cdot|\cdot\}_S$ is $S^*S$, therefore if $\{e_n\}$ is an ONB as above, then $\{(S^*S)e_n\}$ is a lower semi-frame of $\H$ on $\D$.
\end{enumerate} 
\end{exm}


\section{Sesquilinear forms associated to two sequences}
\label{sec:2seq}

In this section we consider two sequences $\xi=\{\xi_n\},\eta=\{\eta_n\}$ of  $\H$. In addition to the analysis and synthesis operators of both sequences one can also define the operator $S_{\xi,\eta}$ on
$$\dom(S_{\xi,\eta}):=\left \{f\in \H: \sum_{n=1}^\infty \pin{f}{\xi_n}\eta_n \text{ exists in }\H \right \}$$
as $S_{\xi,\eta} f =\sum_{n=1}^\infty \pin{f}{\xi_n}\eta_n$, for $f\in \dom(S_{\xi,\eta})$. This operator is actually a {\it multiplier} in the sense of \cite{Balazs_mult}.\\
Clearly, $D_\eta C_\xi \subseteq S_{\xi,\eta}$. However, unlike the case when $\xi=\eta$, the following example demonstrates that the equality $D_\eta C_\xi = S_{\xi,\eta}$ does not always hold.

\begin{exm}
	\label{exm_DC<S}
	Let $\H$ be $\{e_n\}$ an ONB on $\H$.
	Let 
	$$
	\xi:=\left \{e_1,e_1,e_2,2e_2,\dots,e_n,n e_n, \dots\right \}
	$$
	and
	$$
	\eta:=\left \{e_1,0,e_2,0,\dots,e_n,0, \dots\right \}.
	$$
	In particular, $\xi$ is a lower semi-frame and $\eta$ is a frame. 	The operator $D_\eta C_\xi$ is defined on $\dom(D_\eta C_\xi)=\dom(C_\xi)\neq \H$ and acts as $D_\eta C_\xi f =\sum_{n=1}^\infty f_{n}e_n=f $ for $f\in \dom(D_\eta C_\xi)$. However, $S_{\xi,\eta}$ is equal to the identity operator $I$ on $\H$.
\end{exm}

\no  
If $S_{\xi,\eta}$ is invertible, then we have the following reconstruction formula in strong sense
$$
f=S_{\xi,\eta} S_{\xi,\eta}^{-1}f=\sum_{n=1}^\infty \pin{S_{\xi,\eta}^{-1}f}{\xi_n}\eta_n, \qquad \forall f\in R(S_{\xi,\eta}).
$$
Note also that when $\xi,\eta$ are Bessel sequences then $\dom(S_{\xi,\eta})=\H$ and $S_{\xi,\eta}$ is bounded. 
In addition if $\xi,\eta$ are Bessel sequences and $S_{\xi,\eta}=I$, then $\xi,\eta$ are in particular frames (see \cite[Proposition 6.1]{Casazza_dual}).\\

\no Now, we turn our attention to the sesquilinear form defined by two sequences, i.e., 
$$
\O_{\xi,\eta}(f,g)=\sum_{n=1}^\infty \pin{f}{\xi_n}\pin{\eta_n}{g},
$$
which need not be nonnegative nor symmetric. The series above is not necessarily unconditionally convergent. In analogy to the case of one sequence, we can consider also the sesquilinear form 
$$
\Theta_{\xi,\eta}(\{c_n\},\{d_n\})=\sum_{i,j\in \N} c_i\ol{d_j}\pin{\xi_i}{\eta_j}.
$$
However, we will only focus on $\O_{\xi,\eta}$ in this paper.
Our task is to consider this form on some domain $\D_1\times \D_2$ such that $\D_2$ is dense and make it a $0$-closed form. In this way, by Theorem \ref{th_rapp_risol}, the operator $\mathcal{T}$ associated to $\O_{\xi,\eta}$ on $\D_1\times \D_2$ is 
closed, invertible with bounded inverse and with domain $\dom(T)\subseteq \D_1$.
This leads to a reconstruction formula in weak sense, i.e., if $h\in \H$  then for all $g\in\D_2$ 
\begin{align}
\pin{h}{g}&=\pin{\mathcal{T}\mathcal{T}^{-1}h}{g}=\O_{\xi,\eta}(\mathcal{T}^{-1}h,g) \nonumber\\
&=\sum_{n=1}^\infty \pin{\mathcal{T}^{-1}h}{\xi_n}\pin{\eta_n}{g}=\sum_{n=1}^\infty \pin{h}{(\mathcal{T}^{-1})^*\xi_n} \pin{\eta_n}{g} 
\label{rec_weak_1}
\end{align}
or, equivalently, $ g=(w)\sum_{n=1}^\infty \pin{g}{\eta_n}(\mathcal{T}^{-1})^*\xi_n$.
If also $\D_1$ is dense, then $\dom(\mathcal{T})$ is dense, $\mathcal{T}^*$ is the operator associated to $\O_{\xi,\eta}^*$ (i.e., $\O_{\eta,\xi}$ on $\D_2\times \D_1$) and $(\mathcal{T}^{-1})^*=(\mathcal{T}^*)^{-1}$. Therefore, in a similar way, for $f\in \D_1$ and $h\in \H$
\begin{equation*}
\label{rec_weak_2}
\pin{h}{f}= \sum_{n=1}^\infty \pin{h}{\mathcal{T}^{-1}\eta_n}\pin{\xi_n}{f},
\end{equation*}
i.e., $f=(w)\sum_{n=1}^\infty \pin{f}{\xi_n}\mathcal{T}^{-1}\eta_n$.
\no The idea of looking at $0$-closed forms is also justified by the following consideration. Assume that $(\xi,\eta)=(\{\xi_n\},\{\eta_n\})$ is a reproducing pair on $\H$ in the sense of \cite{AST,Anto_Tp,Speck_Bal_15,Speck_Bal_16}. It is easy to see that $\O_{\xi,\eta}$ is $0$-closed. However, it is well-defined and bounded on $\H$. Our approach then gives a generalization of the notion of reproducing pairs in cases where $\O_{\xi,\eta}$ is unbounded.


\subsection{Bounded case}

First of all, let us study the case when $\O_{\xi,\eta}$ is bounded. As proved below, this is always the case when the form is defined on the whole space, in analogy to the situation with one sequence.

\begin{pro}
	Let $\xi,\eta$ be sequences such that $\O_{\xi,\eta}$ is defined on $\H\times \H$. Then $\O_{\xi,\eta}$ is bounded.
\end{pro}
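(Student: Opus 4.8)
The plan is to reduce joint boundedness to \emph{separate} continuity of the form, and then upgrade the latter to a uniform bound by a Baire-category argument. First I would introduce the truncated forms
$$
\O^{(k)}(f,g)=\sum_{n=1}^{k}\pin{f}{\xi_n}\pin{\eta_n}{g},\qquad f,g\in\H,
$$
each of which is a bounded sesquilinear form on $\H\times\H$ (a finite sum of the rank-one forms $(f,g)\mapsto\pin{f}{\xi_n}\pin{\eta_n}{g}$, so that $|\O^{(k)}(f,g)|\le\big(\sum_{n=1}^k\n{\xi_n}\n{\eta_n}\big)\n{f}\n{g}$). By the hypothesis that $\O_{\xi,\eta}$ is defined on $\H\times\H$, the series converges for every pair $(f,g)$, hence $\O^{(k)}(f,g)\to\O_{\xi,\eta}(f,g)$ pointwise.

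Next I would extract separate continuity from this pointwise convergence. Fixing $g\in\H$, the functionals $f\mapsto\O^{(k)}(f,g)$ are bounded and linear on $\H$ and converge pointwise; by the Banach--Steinhaus theorem they are uniformly bounded and the limit $f\mapsto\O_{\xi,\eta}(f,g)$ is a bounded linear functional. Symmetrically, for fixed $f$ the map $g\mapsto\O_{\xi,\eta}(f,g)$ is a bounded anti-linear functional. Thus $\O_{\xi,\eta}$ is continuous in each variable separately. Using the Riesz representation of the anti-linear functional $g\mapsto\O_{\xi,\eta}(f,g)$, I obtain an everywhere-defined operator $T\colon\H\to\H$, linear in $f$, with $\O_{\xi,\eta}(f,g)=\pin{Tf}{g}$ for all $f,g\in\H$.

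To conclude that $T\in\B(\H)$, I would invoke the closed graph theorem. Suppose $f_k\to f$ and $Tf_k\to h$. For every $g\in\H$ one has $\pin{Tf_k}{g}=\O_{\xi,\eta}(f_k,g)\to\O_{\xi,\eta}(f,g)=\pin{Tf}{g}$ by the continuity in the first variable established above, while simultaneously $\pin{Tf_k}{g}\to\pin{h}{g}$; hence $\pin{Tf}{g}=\pin{h}{g}$ for all $g$, so $h=Tf$ and the graph of $T$ is closed. Being closed and everywhere defined, $T$ is bounded, and therefore $|\O_{\xi,\eta}(f,g)|=|\pin{Tf}{g}|\le\n{T}\,\n{f}\,\n{g}$, which is exactly the boundedness of $\O_{\xi,\eta}$.

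The only delicate point, and the step I expect to be the crux, is the passage from separate to joint boundedness: on a general topological vector space separate continuity does not yield a uniform bound, so the essential input is the completeness of $\H$, exploited twice through Baire-category arguments (Banach--Steinhaus to obtain the separate bounds from pointwise convergence of the truncations, and the closed graph theorem to make them uniform). This mirrors the one-sequence situation, where $\dom(\O_\xi)=\H$ forces $\xi$ to be Bessel precisely because the everywhere-defined closed operator $C_\xi$ is automatically bounded.
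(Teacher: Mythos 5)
Your proof is correct and takes essentially the same route as the paper: truncate the form, apply Banach--Steinhaus to the pointwise-convergent truncations to obtain boundedness in each variable separately, represent the form by an everywhere-defined operator $T$, and finish with an automatic-boundedness argument. The only difference is cosmetic: you apply the closed graph theorem directly to $T$, whereas the paper reaches the same conclusion by showing $\dom(T_{\xi,\eta}^*)=\H$, which rests on the same closed-graph fact.
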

\begin{proof}
	Denote by $\O_{\xi,\eta}^k(f,g)= \sum_{n=1}^k \pin{f}{\xi_n}\pin{\eta_n}{g}$, for all $f,g\in \H$. Clearly, there exists $T_k\in \B(\H)$ such that $\O_{\xi,\eta}^k(f,g)=\pin{T_k f}{g}$. 
	The Banach–Steinhaus theorem ensures that the operator $T_{\xi,\eta}$ associated to $\O_{\xi,\eta}$ has domain the whole $\H$. Applying the same theorem to the linear functional $f\mapsto \pin{f}{T_k^* g}=\O_{\xi,\eta}^k(f,g)$ for $g\in\H$, we get
	$\displaystyle \pin{T_{\xi,\eta} f}{g}=\lim_{k\to \infty} \pin{T_kf}{g}=\lim_{k\to \infty} \pin{f}{T_k^* g}=\pin{f}{h}$ for some $h\in \H$. Therefore $D(T_{\xi,\eta}^*)=\H$, i.e., $T_{\xi,\eta}$ (and consequently $\O_{\xi,\eta}$) is bounded.	
\end{proof}

\no As a consequence we can say that $(\xi,\eta)$ is a reproducing pair if and only if $\O_{\xi,\eta}$ is defined on $\H\times \H$ and it is $0$-closed.\\
We recall again that the operator associated to a (bounded) form $\O_\xi$ on $\H\times \H$ is the operator $S_\xi$. Nevertheless, as shown in the next example, when we turn to two sequences $\xi,\eta$ such that $\O_{\xi,\eta}$ is defined on $\H\times \H$ (and therefore bounded), then the operator $T_{\xi,\eta}$ associated to $\O_{\xi,\eta}$ (which is an element of $\B(\H)$) need not be $S_{\xi,\eta}$. More precisely, it is defined as $T_{\xi,\eta} f =(w)\sum_{n=1}^\infty \pin{f}{\xi_n}\eta_n$ for $f\in \H$.

\begin{exm}
	Let again $\{e_n\}$ be an ONB of $\H$. We set
	\begin{align*}
	\xi&=\{e_1,e_1,-e_1,e_2,e_1,-e_1,e_3,e_1,-e_1,\dots\},\\
	\eta&=\{e_1,e_1,e_1,e_2,e_2,e_2,e_3,e_3,e_3,\dots\}.
	\end{align*}
	It is easy to see that $\O_{\xi,\eta}$ is well defined on $\H \times \H$ and $\O_{\xi,\eta}(f,g)=\pin{f}{g}$ for $f,g\in \H$. Thus the operator associated to $\O_{\xi,\eta}$ is the identity operator. However, $\dom(S_{\xi,\eta})=\{f\in \H: \pin{f}{e_1}=0\}$.  
	Note also that $\dom(S_{\eta,\xi})=\H$ and $S_{\eta,\xi}=I$. Then it is the operator associated to $\O_{\eta,\xi}$ (which is exactly $\O_{\xi,\eta}$ since it is symmetric). 
\end{exm}

\no It is worth to mention that if $\xi$ is a Bessel sequence with upper bound $B$ and $\eta$ a sequence such that $\O_{\xi,\eta}=\iota$, then $\eta$ is a lower semi-frame with lower bound $B^{-1}$ (the proof is analogous to the one of Lemma 2.5 of \cite{Anto_Bal} in the discrete version).

\subsection{General case}

Now we return to consider two sequences $\xi,\eta$ generating a generic form $\O_{\xi,\eta}$. This form is clearly defined on $\dom(\xi)\times\dom(\eta)$ and 
$$
\O_{\xi,\eta}(f,g)=\pin{C_\xi f}{C_\eta g}_2, \qquad \forall f\in \dom(\xi),g\in \dom(\eta).
$$
With this domain $\O_{\xi,\eta}$ is a q-closed sesquilinear form. Indeed the definition is satisfied considering the graph norms
$\n{f}_{C_\xi}= (\n{C_\xi f}^2+\n{f}^2)^\mez$ and $\n{g}_{C_\eta}= (\n{C_\eta g}^2+\n{g}^2)^\mez$ on $\dom(\xi)$ and $\dom(\eta)$, respectively. 

Moreover, $N(\O_{\xi,\eta})=\{f\in \dom(\xi): C_\xi f \in R(C_\eta)^\perp\}$. Other properties (in particular equivalent conditions for $\O_{\xi,\eta}$ to be $0$-closed) of this form are stated in the next theorem. Note that if $V,W$ are closed subspaces of a Hilbert space then we denote by $V\dotplus W$ the direct sum of $V$ and $W$.

\begin{theo}
	\label{th_cns_0clos}
	Let us consider $\O_{\xi,\eta}$ on the domain $\dom(\xi)\times\dom(\eta)$. 
	The following statements are equivalent.
			\begin{enumerate}[label=\emph{(\roman*)},ref=(\roman*)]
			\item[\emph{(a)}] $\O_{\xi,\eta}$ is $0$-closed.  
			\item[\emph{(b)}] $\xi,\eta$ are lower semi-frames and 
			$R(C_\xi) \dotplus R(C_\eta)^\perp= l_2$.
			\item[\emph{(c)}] $\xi,\eta$ are lower semi-frames and 
			$R(C_\eta) \dotplus R(C_\xi)^\perp= l_2$.	
		\end{enumerate}
	Assume that $\dom(\eta)$ is dense. Then the operator associated to $\O_{\xi,\eta}$ is $C_\eta^*C_\xi$.
\end{theo}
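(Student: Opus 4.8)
The plan is to reduce both equivalences to the criterion of Lemma \ref{lem_cns_0clos}, applied to the graph norms $\n{f}_{C_\xi}=(\n{C_\xi f}^2+\n{f}^2)^\mez$ and $\n{g}_{C_\eta}=(\n{C_\eta g}^2+\n{g}^2)^\mez$ that witness q-closedness. I would first extract the lower semi-frame property from $0$-closedness. Assuming (a), Lemma \ref{lem_cns_0clos}(iv) supplies $c_1,c_2>0$ realizing the two sup-estimates. Since $\O_{\xi,\eta}(f,g)=\pin{C_\xi f}{C_\eta g}_2$, the Cauchy--Schwarz inequality together with $\n{C_\xi f}\le \n{f}_{C_\xi}$ gives $\sup_{\n{f}_{C_\xi}=1}|\O_{\xi,\eta}(f,g)|\le \n{C_\eta g}$, whence $c_2\n{g}\le c_2\n{g}_{C_\eta}\le \n{C_\eta g}$ for every $g\in\dom(\eta)$; by Proposition \ref{car_form_1_seq}(v) this says exactly that $\eta$ is a lower semi-frame, and the symmetric estimate shows $\xi$ is one. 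In particular $C_\xi,C_\eta$ are bounded below, so by the closedness of $C_\xi,C_\eta$ (Proposition \ref{pro_oper_1}(i)) the ranges $R(C_\xi),R(C_\eta)$ are closed, and the graph norms become equivalent to $\n{C_\xi\cdot}$ and $\n{C_\eta\cdot}$.

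With both ranges closed, I would translate the remaining content of Lemma \ref{lem_cns_0clos} into a statement about the relative position of $M:=R(C_\xi)$ and $N:=R(C_\eta)$. Writing $P_N$ for the orthogonal projection onto $N$, the quantity $\sup_{\n{g}_{C_\eta}\le 1}|\pin{C_\xi f}{C_\eta g}_2|$ is comparable to $\n{P_N\,C_\xi f}$, and symmetrically with the roles of $\xi,\eta$ interchanged; hence the two inequalities of Lemma \ref{lem_cns_0clos}(iv) are equivalent to $P_N|_M$ and $P_M|_N$ both being bounded below. The key functional-analytic step, which I expect to be the main obstacle, is the standard fact that for closed subspaces $M,N$ of a Hilbert space $P_N|_M$ and $P_M|_N$ are both bounded below if and only if $M\dotplus N^\perp=l_2$: boundedness below of $P_N|_M$ yields $M\cap N^\perp=\{0\}$ and closedness of $P_N(M)$, while boundedness below of $P_M|_N$ forces $P_N(M)$ to be dense in $N$ (its orthocomplement in $N$ is $N\cap M^\perp=\ker(P_M|_N)=\{0\}$), so $M+N^\perp=l_2$; the converse passes through the positivity of the Friedrichs angle between $M$ and $N^\perp$. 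Recalling $N(\O_{\xi,\eta})=\{f\in\dom(\xi):C_\xi f\in R(C_\eta)^\perp\}$, this delivers (a)$\Leftrightarrow$(b). For (a)$\Leftrightarrow$(c) I would not repeat the argument but invoke Theorem \ref{th_rapp_risol}(iv) with $\Up=0$: $\O_{\xi,\eta}$ is $0$-closed iff $\O_{\xi,\eta}^*=\O_{\eta,\xi}$ is $0$-closed, and applying the already-established (a)$\Leftrightarrow$(b) to the pair $(\eta,\xi)$ produces exactly condition (c).

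For the final assertion, suppose $\dom(\eta)$ is dense, so that $C_\eta$ is densely defined and $C_\eta^*$ exists. By the definition of the operator associated to a form, $f\in\dom(\mathcal{T})$ precisely when $f\in\dom(\xi)$ and the anti-linear functional $g\mapsto\pin{C_\xi f}{C_\eta g}_2$ is represented on $\dom(\eta)$ by some $h\in\H$, i.e. $\pin{C_\xi f}{C_\eta g}_2=\pin{h}{g}$ for all $g\in\dom(C_\eta)$. By the definition of the adjoint this happens exactly when $C_\xi f\in\dom(C_\eta^*)$, in which case $h=C_\eta^*C_\xi f$. Consequently $\dom(\mathcal{T})=\{f\in\dom(C_\xi):C_\xi f\in\dom(C_\eta^*)\}=\dom(C_\eta^*C_\xi)$ and $\mathcal{T}=C_\eta^*C_\xi$, which also matches, via Theorem \ref{th_rapp_risol}(ii), the earlier observation that $0$-closedness of $\O_{\xi,\eta}$ is equivalent to $0\in\rho(\mathcal{T})$, i.e. to $C_\eta^*C_\xi$ being invertible with bounded inverse.
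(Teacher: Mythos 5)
Your proof is correct and follows the same skeleton as the paper's: both arguments reduce $0$-closedness to Lemma \ref{lem_cns_0clos}(iv) via the graph norms, extract the lower semi-frame property of $\xi$ and $\eta$ from the resulting inequalities, note that the graph norms then become equivalent to $\n{C_\xi\,\cdot\,}$ and $\n{C_\eta\,\cdot\,}$, and convert the two sup-inequalities, through the identity $\sup_{\n{C_\eta g}=1}|\pin{C_\xi f}{C_\eta g}_2|=\n{P_{R(C_\eta)}C_\xi f}$, into the statement that $P_{R(C_\eta)}|_{R(C_\xi)}$ and $P_{R(C_\xi)}|_{R(C_\eta)}$ are bounded below. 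The divergence is in how this is matched with the direct-sum conditions: the paper invokes Theorem 2.3 of \cite{Tang}, which asserts (for closed subspaces $M,N$) that the two bounded-below conditions, $M\dotplus N^\perp=l_2$ and $N\dotplus M^\perp=l_2$ are all equivalent, thereby delivering (b) and (c) simultaneously; you instead prove the equivalence with $M\dotplus N^\perp=l_2$ by hand (closedness of $P_N(M)$ from bounded-belowness, density of $P_N(M)$ in $N$ from $N\cap M^\perp=\{0\}$, bounded oblique projection/Friedrichs angle for the converse --- all sound), and then obtain (c) from (a)$\Leftrightarrow$(b) applied to the adjoint form $\O_{\eta,\xi}$. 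Your route buys self-containedness at the cost of re-proving a known lemma. One small caveat: you justify ``$\O_{\xi,\eta}$ is $0$-closed iff $\O_{\eta,\xi}$ is $0$-closed'' by Theorem \ref{th_rapp_risol}(iv), but that theorem is stated under the hypothesis that $\D_2$ is dense, which is not assumed in the equivalence part of Theorem \ref{th_cns_0clos}; the clean fix stays inside your own framework, since condition (iv) of Lemma \ref{lem_cns_0clos} is manifestly symmetric under passing from $\O$ to $\O^*$ (the two inequalities simply swap roles), so no density is needed. Finally, your explicit verification that the associated operator is $C_\eta^*C_\xi$ --- unwinding the definitions of associated operator and adjoint --- is exactly the argument the paper compresses into ``easy to prove.''
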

\begin{proof}
	Firstly, note that if $\xi,\eta$ are lower semi-frames, then $\nor_{C_\xi}$ and $\nor_{C_\eta}$ are equivalent to the norms given by $\n{f}_\xi=\n{C_\xi f}$ and $\n{g}_\eta=\n{C_\eta g}$ with $f\in \dom(\xi),g\in \dom(\eta)$, respectively.\\
	Assume that $0$-closed. By Lemma \ref{lem_cns_0clos}(iv) 
	\begin{align*}
	c_1\n{f}\leq c_1\n{f}_{C_\xi}\leq \sup_{\n{g}_{C_\eta}=1} |\O_{\xi,\eta}(f,g)|\leq \n{C_\xi f}, \qquad \forall f \in \dom(\xi).
	\end{align*}
	This means that $\xi$ (and in the same way $\eta$) is a lower semi-frame (see \cite[Proposition 4.1]{Classif}). 
	Taking into account the equivalence of norms above, we can rewrite the inequality in Lemma \ref{lem_cns_0clos}(iv) as follows 
	\begin{align}
	\label{dis_1}
	c_1\n{f}_{\xi}\leq \sup_{\n{g}_{\eta}=1} |\O_{\xi,\eta}(f,g)|, \qquad \forall f \in \dom(\xi),\\
	\label{dis_2}
	c_2\n{g}_{\eta}\leq \sup_{\n{f}_{\xi}=1} |\O_{\xi,\eta}(f,g)|, \qquad \forall g \in \dom(\eta).
	\end{align}
	where $c_1,c_2>0$.
	Moreover, denoting by $P_{R(C_\xi)}$ and $P_{R(C_\eta)}$ the orthogonal projections onto the closed ranges $R(C_\xi)$ and $R(C_\eta)$, respectively, one has
	\begin{align}
	\sup_{\n{g}_{\eta}=1} |\O_{\xi,\eta}(f,g)| &= \sup_{\n{g}_{\eta}=1} |\pin{C_\xi f}{C_\eta g}| \nonumber\\
	&=\sup_{\n{C_\eta g}=1} |\pin{P_{R(C_\eta)} C_\xi f}{C_\eta g}| \nonumber\\
	&=\n{P_{R(C_\eta)}C_\xi f}. \label{sup_angle}
	\end{align}
	Then (\ref{dis_1}) and (\ref{dis_2}) carry to $\displaystyle \inf_{\n{C_\xi f}=1} \n{P_{R(C_\eta)}C_\xi f}>0$. In a similar way $\displaystyle \inf_{\n{C_\eta g}=1} \n{P_{R(C_\xi)}C_\eta g}>0$ holds. Theorem 2.3 of \cite{Tang} implies that $R(C_\xi) \dotplus R(C_\eta)^\perp= l_2$ or equivalently that $R(C_\eta) \dotplus R(C_\xi)^\perp= l_2$.
	
	Conversely, let and $\xi,\eta$ be lower semi-frames such that $R(\xi) \dotplus R(C_\eta)^\perp= l_2$ or $R(\eta) \dotplus R(C_\xi)^\perp= l_2$. Again by Theorem 2.3 of \cite{Tang},  we have that
	$$ \inf_{\n{C_\xi f}=1} \n{P_{R(C_\eta)}C_\xi f}>0 \quad \text{and} \quad \inf_{\n{C_\eta g}=1} \n{P_{R(C_\xi)}C_\eta g}>0.$$
	Therefore, equality $(\ref{sup_angle})$ implies that (\ref{dis_1}) and (\ref{dis_2}) hold and $\O_{\xi,\eta}$ is $0$-closed.\\
	Finally, the statement about the associated operator is easy to prove.
\end{proof} 

\begin{rem}
	\begin{enumerate}
		\item If $D_\eta$ is closed, then the operator associated to a form  $\O_{\xi,\eta}$ on $\dom(\xi)\times \dom(\eta)$ with $\dom(\eta)$ dense coincides with $D_\eta C_\xi$.
		\item If $\dom(\eta)$ (resp., $\dom(\xi)$) is dense, then $R(C_\xi) \dotplus R(C_\eta)^\perp= l_2$ (resp., $R(C_\eta) \dotplus R(C_\xi)^\perp= l_2$) is equivalent to $R(C_\xi) \dotplus N(C_\eta^*)= l_2$ (resp., $R(C_\eta) \dotplus N(C_\xi^*)= l_2$).  
	\end{enumerate}
\end{rem}

\no By Theorem \ref{th_rapp_risol} we get the next characterization.

\begin{cor}
	\label{cor_0_in_rho}
	Assume that $\dom(\eta)$ is dense. Then $0\in \rho(C_\eta^*C_\xi)$ if the following equivalent statements are satisfied
	\begin{enumerate}
		\item[\emph{(a)}] $\xi,\eta$ are lower semi-frames and $R(C_\xi) \dotplus N(C_\eta^*)= l_2$. 
		\item[\emph{(b)}] $\xi,\eta$ are lower semi-frames and $R(C_\eta) \dotplus R(C_\xi)^\perp= l_2$.	
	\end{enumerate}
\end{cor}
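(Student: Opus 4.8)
The statement packages two assertions: that (a) and (b) are equivalent, and that either of them forces $0\in\rho(C_\eta^*C_\xi)$. The plan is to reduce both to the results already established, namely Theorem \ref{th_cns_0clos} together with Theorem \ref{th_rapp_risol}(ii), so that essentially no new computation is required; the only genuine ingredient is the adjoint-kernel identity for $C_\eta$ provided by the density of $\dom(\eta)$.

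For the equivalence of (a) and (b) I would first observe that, since $\dom(\eta)$ is dense, $C_\eta$ is densely defined, and hence $N(C_\eta^*)=R(C_\eta)^\perp$ (the orthogonal complement of the range of a densely defined operator always coincides with the kernel of its adjoint). Because $\eta$ is a lower semi-frame, $C_\eta$ is moreover bounded below and closed, so $R(C_\eta)$ is closed and the direct-sum symbol $\dotplus$ is meaningful. Substituting $N(C_\eta^*)=R(C_\eta)^\perp$ turns condition (a) into $R(C_\xi)\dotplus R(C_\eta)^\perp=l_2$, which is exactly condition (b) of Theorem \ref{th_cns_0clos}; this is precisely the content of the preceding remark. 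On the other hand, condition (b) of the corollary is condition (c) of Theorem \ref{th_cns_0clos}. Since that theorem shows its conditions (b) and (c) to be equivalent (both being equivalent to $0$-closedness of $\O_{\xi,\eta}$), the statements (a) and (b) of the corollary are equivalent.

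For the implication I would note that under either condition the hypotheses of Theorem \ref{th_cns_0clos} are met, so the form $\O_{\xi,\eta}$ on $\dom(\xi)\times\dom(\eta)$ is $0$-closed, and since $\dom(\eta)$ is dense its associated operator is $C_\eta^*C_\xi$. A $0$-closed form is in particular solvable (take the perturbing bounded form $\Up$ in Definition \ref{def_solvable} to be zero), and $\dom(\eta)$ is dense, so Theorem \ref{th_rapp_risol}(ii) applies with $\lambda=0$ and yields that $\O_{\xi,\eta}$ is $0$-closed if and only if $0\in\rho(C_\eta^*C_\xi)$. Reading this equivalence in the forward direction gives $0\in\rho(C_\eta^*C_\xi)$, which completes the argument.

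I do not expect a genuine obstacle here: the corollary is essentially a translation of Theorem \ref{th_cns_0clos} through the representation theorem, and the heavy lifting (the angle/direct-sum characterization via Tang's theorem) was already carried out in the proof of that theorem. The only points demanding a little care are that the density of $\dom(\eta)$ is invoked consistently twice — once to replace $R(C_\eta)^\perp$ by $N(C_\eta^*)$, and once to guarantee that the associated operator is the closed operator $C_\eta^*C_\xi$ whose resolvent set is being described — and that $0$-closedness legitimately counts as a special case of solvability, so that Theorem \ref{th_rapp_risol} may indeed be invoked.
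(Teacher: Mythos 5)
Your proof is correct and follows exactly the route the paper intends: the paper states the corollary as an immediate consequence of Theorem \ref{th_rapp_risol} (applied with $\Up=0$ to the $0$-closed form from Theorem \ref{th_cns_0clos}), with the identification $N(C_\eta^*)=R(C_\eta)^\perp$ already recorded in the remark preceding the corollary. The details you supply --- closedness of $R(C_\eta)$, $0$-closedness implying solvability, and the associated operator being $C_\eta^*C_\xi$ --- are precisely the ones the paper leaves implicit.
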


\no If $\O_{\xi,\eta}$ is solvable (in particular, $\lambda$-closed) and $\dom(\eta)$ is dense, then by Theorem \ref{th_rapp_risol} $C_\eta^*C_\xi$ is closed and, moreover, densely defined if $\D(\xi)$ is dense. Otherwise, $C_\eta^*C_\xi$ need not be densely defined nor closed. Note also that if $\xi=\eta$ we regain that $\O_{\xi}$ is $0$-closed if and only if $\xi$ is a lower semi-frame (see Proposition \ref{car_form_1_seq}). A particular case of Theorem \ref{th_cns_0clos} occurs when the domains (or one of them) coincide with the whole space.

\begin{cor} 
	Let $\xi,\eta$ be two sequences of $\H$. The following statements hold.
	\begin{enumerate}[label=\emph{(\roman*)}]
		\item If $\dom(\xi)=\H$ and $\O_{\xi,\eta}$ is $0$-closed, then $\xi$ is a frame of $\H$.
		\item If $\dom(\xi)=\dom(\eta)=\H$, then $(\xi,\eta)$ is a reproducing pair if and only if $\xi,\eta$ are frames and $R(C_\xi) \dotplus N(D_\eta)= l_2$ (resp., $R(C_\eta) \dotplus N(D_\xi)= l_2$).
	\end{enumerate} 
	
\end{cor}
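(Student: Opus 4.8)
The plan is to obtain both parts as immediate corollaries of Theorem \ref{th_cns_0clos}, using repeatedly the single extra fact that a lower semi-frame whose analysis series converges everywhere is automatically a frame. For part (i) I would argue as follows. Since $\O_{\xi,\eta}$ is $0$-closed, the implication (a)$\Rightarrow$(b) of Theorem \ref{th_cns_0clos} gives that $\xi$ is a lower semi-frame. The hypothesis $\dom(\xi)=\H$ means precisely that the series $\sum_{n=1}^\infty |\pin{f}{\xi_n}|^2$ converges for every $f\in \H$; by the characterization recalled after the definition of lower semi-frame (\cite[Proposition 4.1]{Classif}), a lower semi-frame with this property is a frame. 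Hence $\xi$ is a frame of $\H$.

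For part (ii) I would first translate the reproducing-pair condition into a statement about $\O_{\xi,\eta}$. As noted after the Proposition on the bounded case, a pair $(\xi,\eta)$ is a reproducing pair exactly when $\O_{\xi,\eta}$ is defined on $\H\times\H$ and $0$-closed. Under the hypothesis $\dom(\xi)=\dom(\eta)=\H$ the form is automatically defined (and, by that Proposition, bounded) on $\H\times\H$, so $(\xi,\eta)$ is a reproducing pair if and only if $\O_{\xi,\eta}$ is $0$-closed. Theorem \ref{th_cns_0clos} then characterizes $0$-closedness by: $\xi,\eta$ are lower semi-frames and $R(C_\xi)\dotplus R(C_\eta)^\perp=l_2$ (equivalently $R(C_\eta)\dotplus R(C_\xi)^\perp=l_2$). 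Because both domains equal $\H$, the argument of part (i) upgrades each lower semi-frame to a frame, so $\xi$ and $\eta$ are frames; keeping everything as biconditionals makes the resulting chain of equivalences hold in both directions.

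It then remains to rewrite the orthogonal complements as null spaces of the synthesis operators. Since $\eta$ is a frame it is Bessel, so $D_\eta$ is bounded and everywhere defined; together with $C_\eta=D_\eta^*$ from Proposition \ref{pro_oper_1}(i) this forces $D_\eta=C_\eta^*$, whence $R(C_\eta)^\perp=N(C_\eta^*)=N(D_\eta)$, and symmetrically $R(C_\xi)^\perp=N(D_\xi)$. Substituting yields $R(C_\xi)\dotplus N(D_\eta)=l_2$ (resp. $R(C_\eta)\dotplus N(D_\xi)=l_2$), which completes the equivalence. There is no genuine obstacle here: the only step requiring care is the identity $D_\eta=C_\eta^*$, which is where the frame (hence Bessel, hence closed synthesis operator) conclusion is used a second time. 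The Remark following Theorem \ref{th_cns_0clos} already reduces $R(C_\eta)^\perp$ to $N(C_\eta^*)$, so strictly speaking only the final identification $C_\eta^*=D_\eta$ is new.
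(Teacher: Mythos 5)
Your proof is correct and follows exactly the route the paper intends for this corollary (which it leaves unproved as immediate): Theorem \ref{th_cns_0clos} for the lower semi-frame and direct-sum conditions, the fact from \cite[Proposition 4.1]{Classif} that a lower semi-frame with $\dom(\xi)=\H$ is a frame, the earlier observation that $(\xi,\eta)$ is a reproducing pair iff $\O_{\xi,\eta}$ is defined on $\H\times\H$ and $0$-closed, and the identification $D_\eta=C_\eta^*$ (hence $R(C_\eta)^\perp=N(D_\eta)$) for Bessel sequences. No gaps; the care you take with the biconditional chain and with justifying $D_\eta=C_\eta^*$ is exactly what is needed.
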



\no Under condition (b) (or (c)) of Theorem \ref{th_cns_0clos} and that $\dom(\eta)$ is dense, formula (\ref{rec_weak_1}) holds with $\D_1=\dom(\xi)$, $\D_2=\dom(\eta)$ and $\mathcal{T}=C_\eta^* C_\xi$. Moreover, (\ref{rec_weak_1}) can be improved as follows. 
\begin{cor}
	Let $\xi,\eta$ be two sequences of $\H$ with  $\dom(\eta)$ is dense and $\mathcal{T}=C_\eta^*C_\xi$. If conditions \emph{(b)} or \emph{(c)} of Theorem \ref{th_cns_0clos} are satisfied, then  $\{(\mathcal{T}^{-1})^* \xi_n\}$ is a Bessel sequence of $\H$ and 
	\begin{equation*}
	g=\sum_{n=1}^\infty \pin{g}{\eta_n}(\mathcal{T}^{-1})^*\xi_n, \qquad \forall g\in \dom(\eta).
	\end{equation*}
\end{cor}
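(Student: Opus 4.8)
The plan is to establish the two assertions in order: first the Bessel property of $\{(\mathcal{T}^{-1})^*\xi_n\}$, and then to promote the already-known weak reconstruction to a strongly convergent one. By Theorem \ref{th_cns_0clos}, either of the conditions (b), (c) makes $\O_{\xi,\eta}$ $0$-closed, so Theorem \ref{th_rapp_risol}(ii) (with $\lambda=0$) gives $0\in\rho(\mathcal{T})$. Hence $\mathcal{T}=C_\eta^*C_\xi$ maps $\dom(\mathcal{T})$ bijectively onto $\H$ with $\mathcal{T}^{-1}\in\B(\H)$, and $(\mathcal{T}^{-1})^*\in\B(\H)$. Since by construction $\dom(\mathcal{T})=\{f\in\dom(\xi):C_\xi f\in\dom(C_\eta^*)\}\subseteq\dom(\xi)$, I record the inclusion $R(\mathcal{T}^{-1})=\dom(\mathcal{T})\subseteq\dom(C_\xi)$, which is what lets me apply $C_\xi$ after $\mathcal{T}^{-1}$.

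For the Bessel estimate I would show that $C_\xi\mathcal{T}^{-1}$ is a bounded operator from $\H$ into $l_2$. By the inclusion just noted it is everywhere defined, and it is closed: if $f_k\to f$ and $C_\xi\mathcal{T}^{-1}f_k\to\psi$, then $\mathcal{T}^{-1}f_k\to\mathcal{T}^{-1}f$ (as $\mathcal{T}^{-1}$ is bounded), and closedness of $C_\xi$ (Proposition \ref{pro_oper_1}(i)) yields $C_\xi\mathcal{T}^{-1}f=\psi$. The closed graph theorem then gives $C_\xi\mathcal{T}^{-1}\in\B(\H,l_2)$. Using $\pin{f}{(\mathcal{T}^{-1})^*\xi_n}=\pin{\mathcal{T}^{-1}f}{\xi_n}$ together with $\mathcal{T}^{-1}f\in\dom(\xi)$, this produces the bound
$$
\sum_{n=1}^\infty|\pin{f}{(\mathcal{T}^{-1})^*\xi_n}|^2=\n{C_\xi\mathcal{T}^{-1}f}_2^2\leq\n{C_\xi\mathcal{T}^{-1}}^2\n{f}^2,\qquad\forall f\in\H,
$$
so $\{(\mathcal{T}^{-1})^*\xi_n\}$ is a Bessel sequence. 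Here I deliberately avoid the polar-decomposition shortcut used in the one-sequence case, since $\mathcal{T}=C_\eta^*C_\xi$ is not of the symmetric form $C_\xi^*C_\xi$, and the closed graph theorem sidesteps this.

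For the reconstruction formula I would start from the weak identity (\ref{rec_weak_1}), available under (b) or (c) since $\dom(\eta)$ is dense. Conjugating it gives, for each $g\in\dom(\eta)$, the weak convergence $g=(w)\sum_{n=1}^\infty\pin{g}{\eta_n}(\mathcal{T}^{-1})^*\xi_n$. Now $g\in\dom(\eta)$ means $C_\eta g=\{\pin{g}{\eta_n}\}\in l_2$, and since $\{(\mathcal{T}^{-1})^*\xi_n\}$ is Bessel its synthesis operator is bounded on all of $l_2$ and sends $l_2$-sequences to (unconditionally) convergent series; therefore the partial sums $\sum_{n=1}^k\pin{g}{\eta_n}(\mathcal{T}^{-1})^*\xi_n$ converge strongly. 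A strong limit necessarily coincides with the weak limit, so this strong limit equals $g$, which is the asserted formula.

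The main obstacle is the uniform Bessel bound, i.e.\ the boundedness of $C_\xi\mathcal{T}^{-1}$; the rest is a routine upgrade of weak to strong convergence through the synthesis operator of a Bessel sequence. The one point requiring care throughout is the domain bookkeeping: one must know $\mathcal{T}^{-1}f\in\dom(\xi)$ before writing $C_\xi\mathcal{T}^{-1}f$, which is guaranteed precisely by $R(\mathcal{T}^{-1})=\dom(\mathcal{T})\subseteq\dom(\xi)$.
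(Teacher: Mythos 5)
Your proof is correct, and its second half (upgrading the weak identity \eqref{rec_weak_1} to strong convergence via the everywhere-defined bounded synthesis operator of a Bessel sequence, then identifying the strong limit with the weak one) is exactly the paper's argument. Where you genuinely differ is in how you establish the key fact that $C_\xi\mathcal{T}^{-1}$ is bounded. The paper factors $\mathcal{T}=GC_\xi$, where $G$ is the restriction of $C_\eta^*$ to $\dom(C_\eta^*)\cap R(C_\xi)$: from $0\in\rho(\mathcal{T})$ and the lower semi-frame property of $\xi$ it deduces that $C_\xi$ is injective with closed range and that $G$ is a closed bijection onto $\H$, hence $G^{-1}\in\B(\H)$, and then reads off the Bessel bound from $C_\xi\mathcal{T}^{-1}=G^{-1}$. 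You instead note that $C_\xi\mathcal{T}^{-1}$ is everywhere defined (because $R(\mathcal{T}^{-1})=\dom(\mathcal{T})\subseteq\dom(\xi)$) and closed (a closed operator following a bounded one), and apply the closed graph theorem directly. The two routes are of the same depth --- the paper's ``closed bijection has bounded inverse'' step for $G$ is itself an instance of the closed graph theorem --- but yours is more economical, bypassing the injectivity and range-closedness of $C_\xi$ and the surjectivity of $C_\eta^*$, none of which are needed once $0\in\rho(\mathcal{T})$ is in hand. What the paper's factorization buys is an explicit identification of the operator $C_\xi\mathcal{T}^{-1}$ with $G^{-1}$ (so the two Bessel bounds coincide), which is slightly more informative but not required for the corollary; your domain bookkeeping and the use of Theorem \ref{th_rapp_risol}(ii) (equivalently Corollary \ref{cor_0_in_rho}) to get $0\in\rho(\mathcal{T})$ are both accurate.
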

\begin{proof}
	By Corollary \ref{cor_0_in_rho}, $0\in \rho(\mathcal{T})$; hence $C_\xi$ is injective with closed range $R(C_\xi)$ and $R(C_\eta^*)=\H$. Moreover, $\mathcal{T}=GC_\xi$ where $G$ is the restriction of $C_\eta^*$ on $\dom(C_\eta^*)\cap R(C_\xi)$. Note that $G$ is closed, invertible operator and $R(G)=\H$, i.e. $G^{-1}\in \B(\H)$. Thus, for $f\in \H$ we have $\mathcal{T}^{-1} f\in \dom(\xi)$ and
	\begin{align*}
	\sum_{n=1}^\infty |\pin{f}{(\mathcal{T}^{-1})^*\xi_n}|^2=\|C_\xi \mathcal{T}^{-1}f\|^2_2=\|G^{-1}f\|^2\leq \|G^{-1}\|^2\|f\|^2.
	\end{align*}
	Hence $\{(\mathcal{T}^{-1})^* \xi_n\}$ is a Bessel sequence. 
	Thus, for $g\in \dom(\eta)$, $\sum_{n=1}^\infty \pin{g}{\eta_n}(\mathcal{T}^{-1})^*\xi_n$ is convergent and, in particular, by \eqref{rec_weak_1} it is convergent to $g$.
\end{proof}

\subsection{Maximality of domains}

The domain $\dom(\xi) \times \dom(\eta)$ is not always a maximal domain on which $\O_{\xi,\eta}$ can be defined. For instance, let us consider
	\begin{equation}
	\label{exm_extens}
	\xi=\{n e_n\} \text{ and } \eta=\{n^{-1}e_n\}, \text{ where  $\{e_n\}$ is an ONB.} 
	\end{equation}
	Clearly, $\O_{\xi,\eta}$ can be defined on $\H\times \H$, which is larger than $\dom(\xi) \times \dom(\eta)$.\\
	We would stress that there exist more significant examples than the previous one. In \cite{Speck_Bal_16} for a Bessel Gabor sequence $\xi$ the authors found a non Bessel sequence $\eta$ such that $\O_{\xi,\eta}$ is defined on $L^2(\R) \times L^2(\R)$, while  $\dom(\xi) \times \dom(\eta)$ is a proper subspace of  $L^2(\R) \times L^2(\R)$.
	
	This is the general situation for two sequences which are one the dual of the other one. Recall that a sequence $\eta$ is a {\it dual} of a sequence $\xi$ if 
	$$
	f=\sum_{n=1}^\infty \pin{f}{\xi_n}\eta_n=\sum_{n=1}^\infty \pin{f}{\eta_n}\xi_n, \qquad \forall f\in \H,
	$$
	i.e., $S_{\xi,\eta}=S_{\eta,\xi}=I$. The sesquilinear form $\O_{\xi,\eta}$ is clearly defined on $\H\times \H$.
	
	Other possible choices of the domain of $\O_{\xi,\eta}$ for two sequences $\xi,\eta$ are given easily as follows. Let $\{\alpha_n\}$ be a sequence of nonzero complex numbers and let $\xi'=\{\alpha_n \xi_n\}$ and $\eta'=\{\ol{\alpha_n}^{-1}\eta_n\}$. Then 
	$\O_{\xi,\eta}$ is defined on $\dom(\xi')\times \dom(\eta')$. Therefore, as in the case of example (\ref{exm_extens}), with an opportune sequence $\{\alpha_n\}$ the form $\O_{\xi,\eta}$ may be defined on a domain larger than $\dom(\xi) \times \dom(\eta)$.
	

The next result will be useful in the sequel.

\begin{lem}
	\label{lem_W^perp}
	Let $W$ be a closed subspace of $l_2$ with $\dim W^\perp <\infty$ and  $\{a_n\}$ be a complex sequence such that 
	$$
	\sum_{n=1}^\infty a_n b_n \text{ is convergent for all } \{b_n\}\in W.
	$$
	Then $\{a_n\}\in l_2$.   
\end{lem}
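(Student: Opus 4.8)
The plan is to reduce the statement to the case $W=l_2$, where it is a standard consequence of the uniform boundedness principle, and then bridge the gap between $W$ and $l_2$ using that $W$ has finite codimension. Concretely, I would first record the model fact: if $\sum_{n=1}^\infty a_n b_n$ converges for every $\{b_n\}\in l_2$, then $\{a_n\}\in l_2$. This follows by setting $F_N(\{b_n\})=\sum_{n=1}^N a_n b_n$, a bounded functional on $l_2$ whose representing vector is the truncated conjugate of $\{a_n\}$, so that $\n{F_N}=\left(\sum_{n=1}^N|a_n|^2\right)^\mez$. Pointwise convergence of $F_N(\{b_n\})$ makes the family pointwise bounded, so Banach--Steinhaus gives $\sup_N \n{F_N}<\infty$, i.e. $\sum_{n=1}^\infty |a_n|^2<\infty$.

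The heart of the argument is therefore to upgrade the hypothesis ``convergence for all $\{b_n\}\in W$'' to ``convergence for all $\{b_n\}\in l_2$''. Here the assumption $\dim W^\perp<\infty$ enters. Writing $m=\dim W^\perp$, I would show that $W$ can be complemented inside $l_2$ by finitely many canonical basis vectors, i.e. there exist indices $j_1,\dots,j_m$ with $l_2 = W \dotplus \mathrm{span}\{e^{(j_1)},\dots,e^{(j_m)}\}$, where $e^{(j)}$ denotes the $j$-th unit sequence. Once this is available, any $\{b_n\}\in l_2$ decomposes as $b = w + \sum_{l=1}^m \lambda_l e^{(j_l)}$ with $w\in W$, and then for each $N$
\[
\sum_{n=1}^N a_n b_n = \sum_{n=1}^N a_n w_n + \sum_{l:\, j_l\le N} \lambda_l a_{j_l};
\]
the first sum converges by hypothesis (since $w\in W$) and the second is eventually the fixed finite sum $\sum_{l=1}^m\lambda_l a_{j_l}$, so $\sum_{n=1}^\infty a_n b_n$ converges. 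Combined with the model fact, this yields $\{a_n\}\in l_2$.

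The main obstacle is the complementation claim, because the unit vectors $e^{(j)}$ are exactly the directions on which the pairing $\sum_n a_n b_n$ converges automatically (the series being finite), so we must span the ``missing'' directions $W^\perp$ using them. I would argue as follows: the finitely supported sequences are dense in $l_2$, hence their image under the quotient map $q:l_2\to l_2/W\cong \C^m$ is a dense subspace of $\C^m$, thus all of $\C^m$; since this image is $\mathrm{span}\{q(e^{(j)}):j\in\N\}$, finitely many of the $q(e^{(j)})$, say $q(e^{(j_1)}),\dots,q(e^{(j_m)})$, form a basis of $\C^m$, and the corresponding $e^{(j_l)}$ then complement $W$. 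Equivalently, choosing an orthonormal basis $u^{(1)},\dots,u^{(m)}$ of $W^\perp$ and using that the matrix $[u^{(k)}_n]_{k,n}$ has rank $m$, one selects $m$ independent columns $j_1,\dots,j_m$; the invertibility of $[u^{(k)}_{j_l}]_{k,l}$ makes the projection $P_{W^\perp}$ injective on $\mathrm{span}\{e^{(j_l)}\}$, which forces the direct-sum decomposition. With the complementation in hand, the remaining steps are routine.
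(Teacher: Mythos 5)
Your proposal is correct and takes essentially the same route as the paper: both reduce to the standard $W=l_2$ case (Banach--Steinhaus) by showing that every element of $l_2$ differs from an element of $W$ by a finitely supported sequence, so that the convergence hypothesis upgrades from $W$ to all of $l_2$. The only difference is bookkeeping: the paper fixes $p\geq m$ so that the truncations $(d^i_1,\dots,d^i_p)$ of a basis of $W^\perp$ are independent and, for each given sequence, solves a linear system to adjust its first $p$ coordinates into $W$, whereas you select $m$ canonical basis vectors $e^{(j_1)},\dots,e^{(j_m)}$ whose images span $l_2/W$ and obtain the cleaner, once-and-for-all complementation $l_2=W\dotplus\mathrm{span}\{e^{(j_1)},\dots,e^{(j_m)}\}$ --- the same finite-rank argument in slightly different clothing.
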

\begin{proof}
	As it is well-known \cite[Example 34.2]{Heuser}, the statement is true when $W=l_2$. Assume now that $W$ is a proper subspace of $l_2$. Let $m=\dim W^\perp$ and $\{d^1,\dots,d^m\}$ be a basis of $W^\perp$. We will prove that, for $k=\{k_n\}\in l_2$, 
	$\sum_{n=1}^\infty a_n k_n$ is convergent. \\
	Let $p\geq m$ be an index such that the vectors $(d^i_1,\dots, d^i_p)$, for $i=1,\dots, m$, are independent. Such a integer exists because $\{d^1,\dots,d^m\}$ are independent. 
	Put $h=\{h_1,\dots, h_p, k_{p+1},k_{p+2},\dots\}$ where $h_1,\dots, h_p$ are complex numbers. This is an element of $l_2$. Our purpose is to find complex numbers $h_1,\dots, h_j$ such that, in particular, $h\in W$. Since $W$ is closed, it is enough to impose that $h\perp d^i$ for $i=1,\dots,m$. The conditions
	$$
	0=\pin{h}{d^i}=\sum_{n=1}^p h_n\ol{d^i_n} + \sum_{n=p+1}^\infty k_n\ol{d^i_n}, \qquad \forall i=1,\dots, m,
	$$
	constitute a linear system in the variables $h_1,\dots, h_p$. This system admits solutions since the $m$ vectors $(d^i_1,\dots, d^i_p)$ are independent. \\
	For such a solution, $h\in W$ and therefore $\sum_{n=1}^\infty a_n h_n$ is convergent by hypothesis. But 
	$\sum_{n=1}^\infty a_n k_n=\sum_{n=1}^p a_n (k_n-h_n)+ \sum_{n=1}^\infty a_n h_n$. In conclusion  $\sum_{n=1}^\infty a_n k_n$ is convergent for all $k=\{k_n\}\in l_2$ and this implies that $\{a_n\}\in l_2$.  
\end{proof}

\no Now we discuss about the maximality of the domain of $\O_{\xi,\eta}$ where $\xi=\{\xi_n\}$ and $\eta=\{\eta_n\}$ are two sequences. Let $Y$ be a subspace of $\H$. Set
$$
\mathcal{X}(Y):=\left \{f\in \H: \sum_{n=1}^\infty \pin{f}{\xi_n}\pin{\eta_n}{g} \text{ exists in $\H$  for all } g\in Y \right \}.
$$
An analog definition can be given by symmetry for a fixed first component. Some properties of $\mathcal{X}$ are listed in the next proposition, whose easy proof is omitted.

\begin{pro}
	The following statements hold.
	\begin{enumerate}[label=\emph{(\roman*)}]
		\item $\mathcal{X}(Y)$ is the greatest subspace $\D_1$ of $\H$ for which $\O_{\xi,\eta}$ can be defined on $\D_1 \times Y$.
		\item The map $\mathcal{X}$ is decreasing, i.e., if $Y_1\subseteq Y_2$ are two subspaces of $\H$, then $\mathcal{X}(Y_1)\supseteq \mathcal{X}(Y_2)$.
		\item $\mathcal{X}(\H)=\{f\in \H:(w)\sum_{n=1}^\infty \pin{f}{\xi_n}\eta_n \text{ exists in }\H \}$.
		\item $\dom(\xi)\subseteq \mathcal{X}(\dom(\eta))$. If $\dim R(C_\eta)^\perp <\infty$ and $R(C_\eta)$ is closed, then $\mathcal{X}(\dom(\eta))=\dom(\xi)$.
		\item $\mathcal{X}(\dom(S_{\eta,\xi}))=\mathcal{X}(\{g\in \H:(w)\sum_{n=1}^\infty \pin{g}{\eta_n}\xi_n \text{ exists in }\H  \})=\H$.
	\end{enumerate} 	
\end{pro}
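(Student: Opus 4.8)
The plan is to dispatch the five items in turn, using throughout that for fixed $f$ and $g$ the expression $\O_{\xi,\eta}(f,g)=\sum_{n=1}^\infty\pin{f}{\xi_n}\pin{\eta_n}{g}$ is a \emph{scalar} series and that, by definition, $f\in\mathcal{X}(Y)$ precisely when this series converges for every $g\in Y$. Since $f\mapsto\pin{f}{\xi_n}$ is linear, $\mathcal{X}(Y)$ is a subspace of $\H$, and item (i) is then immediate: $\O_{\xi,\eta}$ is definable on $\D_1\times Y$ exactly when the series converges for all $f\in\D_1$, $g\in Y$, i.e. when $\D_1\subseteq\mathcal{X}(Y)$, so $\mathcal{X}(Y)$ is the largest admissible $\D_1$. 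Item (ii) is just restriction of the range of $g$: convergence for all $g\in Y_2$ forces convergence for all $g\in Y_1\subseteq Y_2$, whence $\mathcal{X}(Y_2)\subseteq\mathcal{X}(Y_1)$.

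For item (iii) I would fix $f\in\mathcal{X}(\H)$ and, for each $k$, consider the functional $\ell_k(g)=\ol{\sum_{n=1}^k\pin{f}{\xi_n}\pin{\eta_n}{g}}=\sum_{n=1}^k\pin{\xi_n}{f}\pin{g}{\eta_n}$, which is bounded and linear on $\H$. By assumption $\ell_k(g)$ converges for every $g$, hence is pointwise bounded, so the uniform boundedness principle yields $\sup_k\n{\ell_k}<\infty$ and a bounded linear limit $\ell$. Riesz's representation theorem provides $h\in\H$ with $\ell(g)=\pin{g}{h}$, i.e. $\sum_{n=1}^\infty\pin{f}{\xi_n}\pin{\eta_n}{g}=\pin{h}{g}$ for all $g\in\H$; this is exactly $h=(w)\sum_{n=1}^\infty\pin{f}{\xi_n}\eta_n$. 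The reverse inclusion is immediate from the definition of the weak sum, which gives the stated equality.

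For item (iv) the inclusion $\dom(\xi)\subseteq\mathcal{X}(\dom(\eta))$ is Cauchy--Schwarz in $l_2$: if $f\in\dom(\xi)$ and $g\in\dom(\eta)$ then $C_\xi f,C_\eta g\in l_2$ and $\sum_{n=1}^\infty\pin{f}{\xi_n}\pin{\eta_n}{g}=\pin{C_\xi f}{C_\eta g}_2$ converges absolutely. For the converse, under the hypothesis that $R(C_\eta)$ is closed with $\dim R(C_\eta)^\perp<\infty$, I take $f\in\mathcal{X}(\dom(\eta))$ and set $a=\{\pin{f}{\xi_n}\}$; since $\pin{\eta_n}{g}=\ol{(C_\eta g)_n}$, the membership $f\in\mathcal{X}(\dom(\eta))$ says that $\sum_{n=1}^\infty a_n\ol{b_n}$ converges for every $b\in R(C_\eta)$, equivalently that $\sum_{n=1}^\infty a_nc_n$ converges for every $c$ in the conjugate subspace $W=\{\ol{b}:b\in R(C_\eta)\}$. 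Because complex conjugation is an antilinear isometry of $l_2$, it is a homeomorphism carrying orthogonal complements to orthogonal complements, so $W$ is closed with $\dim W^\perp=\dim R(C_\eta)^\perp<\infty$, and Lemma \ref{lem_W^perp} gives $a\in l_2$, i.e. $f\in\dom(\xi)$. This is the step I expect to be the main obstacle: it is where the closedness and finite-codimension hypotheses become indispensable, and one must track the complex conjugation carefully to land exactly in the hypotheses of Lemma \ref{lem_W^perp}.

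Finally, for item (v) I would first treat the larger set $W_{\eta,\xi}=\{g\in\H:(w)\sum_{n=1}^\infty\pin{g}{\eta_n}\xi_n\text{ exists in }\H\}$. For $g\in W_{\eta,\xi}$ with weak sum $h$, conjugating the identity $\pin{h}{f}=\sum_{n=1}^\infty\pin{g}{\eta_n}\pin{\xi_n}{f}$ gives $\sum_{n=1}^\infty\pin{f}{\xi_n}\pin{\eta_n}{g}=\pin{f}{h}$ for every $f\in\H$; hence the series converges for all $f$ and $\mathcal{X}(W_{\eta,\xi})=\H$. Since $\dom(S_{\eta,\xi})\subseteq W_{\eta,\xi}$ (strong convergence implies weak), the monotonicity in (ii) forces $\mathcal{X}(\dom(S_{\eta,\xi}))\supseteq\mathcal{X}(W_{\eta,\xi})=\H$, and as $\mathcal{X}(\cdot)\subseteq\H$ always holds, both sets equal $\H$.
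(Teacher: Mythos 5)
Your proof is correct, and it follows exactly the route the paper intends: the paper omits the argument as easy, noting only that point (iv) is a consequence of Lemma \ref{lem_W^perp}, which is precisely how you handle it (with the added care of tracking the complex conjugation, and with the same Banach--Steinhaus/Riesz argument for (iii) that the paper uses in its bounded-case proposition). No gaps; the hypotheses of closedness and finite codimension of $R(C_\eta)$ enter exactly where you say they do.
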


\no Note that point (iv) is a consequence of Lemma \ref{lem_W^perp} and occurs, for instance, if $\eta$ is a Riesz-Fischer sequence. A consequence of this proposition is: if $\dim N(C_\xi^*),\dim N(C_\eta^*)<\infty$ and $R(C_\xi),R(C_\eta)$ are closed, then $\dom(\eta)\subseteq \D_2$ and $\dom(\xi)\subseteq \D_1$ imply $\D_1=\dom(\xi)$ and $\D_2=\dom(\eta)$. That is, $\dom(\xi)\times \dom(\eta)$ is the greatest domain in this case.

Suppose now that $Y$ is dense in $\H$. Denote by $\mathcal{T}(Y)$ the operator associated to $\O_{\xi,\eta}$ on $\mathcal{X}(Y)\times Y$. In this way we can define an operator-valued map $\mathcal{T}$ defined on the family of dense subspaces of $\H$.

\begin{pro}
	The following statements hold.
	\begin{enumerate}[label=\emph{(\roman*)}]
		\item The map $\mathcal{T}$ is decreasing, i.e., if $Y_1\subseteq Y_2$ are two dense subspaces of $\H$, then $\mathcal{T}(Y_1)\supseteq \mathcal{T}(Y_2)$.
		\item $\dom(\mathcal{T}(\H))=\mathcal{X}(\H)=\{f\in \H:(w)\sum_{n=1}^\infty \pin{f}{\xi_n}\eta_n \text{ exists in }\H \}$ and $\mathcal{T}(\H)f=(w)\sum_{n=1}^\infty \pin{f}{\xi_n}\eta_n$ for all $f\in \dom(\mathcal{T}(\H))$.
		\item If $\dom(S_{\eta,\xi})$ is dense, then  $\mathcal{T}(\dom(S_{\eta,\xi}))=S_{\eta,\xi}^*$.
	\end{enumerate} 
\end{pro}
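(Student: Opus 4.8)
The plan is to prove the three items in order, each reducing to the definition of the operator associated to a form (domain \eqref{eq_dom_intro1}) together with the preceding proposition on $\mathcal{X}$. For (i) I would start from $f\in\dom(\mathcal{T}(Y_2))$: by definition $f\in\mathcal{X}(Y_2)$ and there is $h\in\H$ with $\O_{\xi,\eta}(f,g)=\pin{h}{g}$ for every $g\in Y_2$, hence a fortiori for every $g\in Y_1\subseteq Y_2$. Since $\mathcal{X}$ is decreasing, $\mathcal{X}(Y_2)\subseteq\mathcal{X}(Y_1)$, so $f\in\mathcal{X}(Y_1)$ and the same $h$ exhibits $f\in\dom(\mathcal{T}(Y_1))$ with $\mathcal{T}(Y_1)f=h=\mathcal{T}(Y_2)f$; density of $Y_1$ guarantees $h$ is unique, so the two values agree. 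This yields $\mathcal{T}(Y_2)\subseteq\mathcal{T}(Y_1)$, that is $\mathcal{T}(Y_1)\supseteq\mathcal{T}(Y_2)$.

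For (ii) the inclusion $\dom(\mathcal{T}(\H))\subseteq\mathcal{X}(\H)$ is immediate, since the domain of an associated operator is contained in its first component, here $\mathcal{X}(\H)$. For the reverse inclusion I take $f\in\mathcal{X}(\H)$; by the preceding proposition this is exactly the assertion that $h:=(w)\sum_{n=1}^\infty\pin{f}{\xi_n}\eta_n$ exists, which unwinds to $\pin{h}{g}=\sum_{n=1}^\infty\pin{f}{\xi_n}\pin{\eta_n}{g}=\O_{\xi,\eta}(f,g)$ for all $g\in\H$. Hence $f\in\dom(\mathcal{T}(\H))$ with $\mathcal{T}(\H)f=h$, which is the stated formula. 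The boundedness of $g\mapsto\O_{\xi,\eta}(f,g)$ required for the weak sum to exist is the Banach--Steinhaus argument already recorded in that proposition, so I would simply cite it.

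For (iii) set $Y=\dom(S_{\eta,\xi})$, dense by hypothesis; the preceding proposition gives $\mathcal{X}(Y)=\H$, so $\mathcal{T}(Y)$ is the operator associated to $\O_{\xi,\eta}$ on $\H\times Y$. The pivotal step is the identity
\[
\O_{\xi,\eta}(f,g)=\sum_{n=1}^\infty\pin{f}{\xi_n}\pin{\eta_n}{g}=\pin{f}{\sum_{n=1}^\infty\pin{g}{\eta_n}\xi_n}=\pin{f}{S_{\eta,\xi}g},\qquad f\in\H,\ g\in Y,
\]
where the middle equality uses that $S_{\eta,\xi}g$ is the strong limit of $\sum_{n\leq k}\pin{g}{\eta_n}\xi_n$ and that $\overline{\pin{g}{\eta_n}}=\pin{\eta_n}{g}$. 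Feeding this into the definition of the associated operator, the condition $f\in\dom(\mathcal{T}(Y))$ with $\mathcal{T}(Y)f=h$ becomes $\pin{f}{S_{\eta,\xi}g}=\pin{h}{g}$ for all $g\in Y$, which is verbatim $f\in\dom(S_{\eta,\xi}^*)$ with $S_{\eta,\xi}^*f=h$. Therefore $\mathcal{T}(Y)=S_{\eta,\xi}^*$.

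I expect the only delicate point to be the middle equality in the displayed identity of (iii): commuting the fixed inner product $\pin{f}{\cdot}$ past a series that is only strongly, and possibly merely conditionally, convergent. This is legitimate precisely because $S_{\eta,\xi}g$ is defined as the strong limit of its partial sums and $\pin{f}{\cdot}$ is continuous, so neither rearrangement nor absolute convergence is invoked. Everything else is a direct translation between the defining conditions of $\mathcal{X}$, $\mathcal{T}$, the associated operator, and the Hilbert-space adjoint.
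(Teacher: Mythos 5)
Your proof is correct, and since the paper states this proposition without proof (treating it, like the preceding one on $\mathcal{X}$, as an immediate consequence of the definition \eqref{eq_dom_intro1} of the associated operator), your argument is exactly the intended unwinding of definitions. In particular you handle the only genuinely delicate points the right way: in (i) the monotonicity of $\mathcal{X}$ plus density of $Y_1$ to get a well-defined extension, in (ii) the Banach--Steinhaus identification of $\mathcal{X}(\H)$ with the weak-sum domain from the preceding proposition, and in (iii) the continuity of $\pin{f}{\cdot}$ along the strong limit defining $S_{\eta,\xi}g$, which turns the associated-operator condition verbatim into the definition of the adjoint $S_{\eta,\xi}^*$.
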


\section{Examples}
\label{sec:exm}

\subsection{Weighted Riesz basis and canonical dual}
Let $\phi:=\{\phi_n\}$ be a Riesz basis of $\H$. Then there exists a bounded bijective operator $V\in \B(\H)$ such that $\phi_n =V e_n$ for all $n\in \N$ (see  \cite{Ole}). The analysis operator $C_\phi$ is defined for $f\in \H$ as $C_\phi f =\{\pin{V^*f}{e_n}\}$.  
The {\it canonical dual} of $\phi$ is the sequence $\psi:=\{(V^{-1})^* e_n\}$. Therefore $\phi$ and $\psi$ are biorthogonal.\\

\no Now let $\alpha=\{\alpha_n\}$ be a complex sequence and $\O^\alpha_{\phi,\psi}$ the sesquilinear form
\begin{equation}
\label{form_weig_Riesz}
\O^\alpha_{\phi,\psi} (f,g)=\sum_{n=1}^\infty \alpha_n \pin{f}{\phi_n}\pin{\psi_n}{g}.
\end{equation}
We can define this form on domains of the type $\dom(\xi)\times \dom(\eta)$ where $\xi=\{\beta_n \phi_n\}$, $\eta=\{\gamma_n \psi_n\}$ and $\ol{\beta_n}\gamma_n=\alpha_n$ for all $n\in \N$. In other words, we can consider $\O^\alpha_{\phi,\psi}$ as a form $\O_{\xi,\eta}$ where $\xi,\eta$ are sequences as above. \\ 

\no First of all, let us determine the (densely defined) analysis operator $C_\xi$ of $\xi=\{\beta_n \phi_n\}$ and its adjoint $C_\xi^*$. The operator $C_\xi$ is defined on the dense domain
$
\dom(\xi)=\left \{g\in \H: \sum_{n=1}^\infty |\beta_n|^2|\pin{V^*g}{e_n}|^2 <\infty \right \}
$
as $C_\xi g =\{\ol{\beta_n}\pin{V^*g}{e_n}\}$. 
The adjoint $C_\eta^*$ has domain $\dom(C_\eta^*)=l_2\cap l_2(\beta^2)$. Indeed, let $\{c_n\}\in l_2$. The linear functional 
\begin{equation}
\label{fun_Riesz}
g \mapsto \sum_{n=1}^\infty c_n\pin{g}{\xi_n}= \sum_{n=1}^\infty \ol{\beta_n} c_n\pin{V^*g}{ e_n}
\end{equation}
is clearly bounded on $\dom(C_\xi)$ if $\{c_n\}\in l_2(\beta^2)$. Conversely assume that (\ref{fun_Riesz}) is bounded on $\dom(C_\xi)$ with bound $M>0$. Since $V^*$ is a bijection of $\H$, for all $k\in \N$ there exists $h\in \H$ such that 
$ V^* h =\sum_{n=1}^k \beta_n\ol{c_n} e_n$. Therefore for all $k\in \N$
\begin{align*}
\sum_{n=1}^k |\beta_n c_n|^2&=\left |\sum_{n=1}^\infty c_n\pin{h}{\xi_n}\right | \leq M \n{h}\\
&\leq M \n{{V^*}^{-1}}\n{V^*h} \\
&=M\n{{V^*}^{-1}} \left (\sum_{n=1}^k |\beta_n c_n|^2\right )^\mez.
\end{align*}
This means that $\{c_n\}\in l_2(\beta^2)$. Moreover, it is easy to see that $D_\xi=C_\xi^*$. If $\eta=\{\gamma_n \psi_n\}$, then in the same way $\dom(C_\eta^*)=\dom(D_\eta)=l_2\cap l_2(\gamma^2)$. \\

\no Coming back to the study of (\ref{form_weig_Riesz}), put $\xi=\{\beta_n \phi_n\}$ and $\eta=\{\gamma_n \psi_n\}$ and $\ol{\beta_n}\gamma_n=\alpha_n$ for all $n\in \N$. The operator associated to $\O_{\xi,\eta}$ is $C_\eta^*C_\xi=D_\eta C_\xi \subseteq S_{\xi,\eta}$, which is defined by 
$$
D_\eta C_\xi f=\sum_{n=1}^\infty \alpha_n \pin{f}{\phi_n}\psi_n
$$
on the domain 
\begin{align*}
\dom(D_\eta C_\xi)&=\{f\in \dom(C_\xi): C_\xi f \in \dom(D_\eta)\}\\
&=\left \{f\in \H:\sum_{n=1}^\infty (|\alpha_n|^2+|\beta_n|^2)|\pin{f}{\phi_n}|^2<\infty \right \}.
\end{align*}
In general, $D_\eta C_\xi$ is densely defined and closable but it need not be closed. In the following we adopt the choice $\xi=\{\phi_n\}$ and $\eta=\{\alpha_n \psi_n\}$.



\begin{pro}
	Let us consider $\O_{\xi,\eta}$ on the domain $\dom(\xi)\times\dom(\eta)$ where $\xi=\{\phi_n\}$ and $\eta=\{\alpha_n \psi_n\}$. The following statements hold.
	\begin{enumerate}[label=\emph{(\roman*)}]
		\item The operator associated to $\O_{\xi,\eta}$ is $S_{\xi,\eta}$. Moreover, $S_{\xi,\eta}$ is defined by
		$$
		S_{\xi,\eta} f =\sum_{n=1}^\infty \alpha_n \pin{f}{\phi_n}\psi_n
		$$
		on the domain
		$$\dom(S_{\xi,\eta}):=\left \{f\in \H: \sum_{n=1}^\infty |\alpha_n|^2|\pin{f}{\phi_n}|^2<\infty \right \}.$$
		\item $\O_{\xi,\eta}$ is $0$-closed if and only if $\inf_n |\alpha_n|>0$.
		\item $\O_{\xi,\eta}$ is solvable. In particular, $\O_{\xi,\eta}$ is $\lambda$-closed if and only if $\lambda \notin \ol{\{\alpha_n\}}$, the closure of $\{\alpha_n\}$. 
	\end{enumerate}
\end{pro}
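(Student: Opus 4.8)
The plan is to compute the associated operator explicitly and then read off all three assertions from the spectrum of a single diagonal operator.

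First I would settle (i). Since $V^*$ and $(V^*)^{-1}$ are bounded bijections and the finite linear combinations of the $e_n$ are dense, $\dom(\eta)$ is dense; hence, as recorded just before the statement, the operator associated to $\O_{\xi,\eta}$ on $\dom(\xi)\times\dom(\eta)$ is $C_\eta^*C_\xi=D_\eta C_\xi\subseteq S_{\xi,\eta}$, with $\dom(D_\eta)=l_2\cap l_2(\alpha^2)$. Because $\xi=\{\phi_n\}$ is a Riesz basis, $C_\xi f\in l_2$ for every $f\in\H$, so the condition $C_\xi f\in\dom(D_\eta)$ reduces to $\sum_n|\alpha_n|^2|\pin{f}{\phi_n}|^2<\infty$; this already fixes the domain claimed in (i). For the reverse inclusion $S_{\xi,\eta}\subseteq D_\eta C_\xi$ I would use that the canonical dual $\psi=\{(V^{-1})^*e_n\}=\{(V^*)^{-1}e_n\}$ is itself a Riesz basis, so the series $\sum_n\alpha_n\pin{f}{\phi_n}\psi_n$ converges in $\H$ if and only if $\{\alpha_n\pin{f}{\phi_n}\}\in l_2$, i.e. on exactly the same domain. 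Thus $D_\eta C_\xi=S_{\xi,\eta}$ with the stated formula and domain.

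Next I would rewrite this operator as a similarity transform of a diagonal operator. Using $\pin{f}{\phi_n}=\pin{V^*f}{e_n}$ and $\psi_n=(V^*)^{-1}e_n$ one gets $S_{\xi,\eta}f=(V^*)^{-1}M_\alpha V^*f$, where $M_\alpha h=\sum_n\alpha_n\pin{h}{e_n}e_n$ is the normal (diagonal) operator with domain $\{h:\sum_n|\alpha_n|^2|\pin{h}{e_n}|^2<\infty\}$. Since $M_\alpha$ is a multiplication operator one has $\rho(M_\alpha)=\{\lambda\in\C:\inf_n|\alpha_n-\lambda|>0\}=\C\setminus\ol{\{\alpha_n\}}$, and because similarity by the bounded bijection $V^*$ preserves the resolvent set, $\rho(S_{\xi,\eta})=\rho(M_\alpha)=\C\setminus\ol{\{\alpha_n\}}$.

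For (iii) I would first establish solvability, and this is the step I expect to be the main obstacle: when $\{\alpha_n\}$ is dense in $\C$ one has $\rho(S_{\xi,\eta})=\emptyset$, so no scalar shift $\Up=-\lambda\iota$ can turn $\O_{\xi,\eta}$ into a $0$-closed form, and one must perturb by a genuinely diagonal (non-scalar) bounded operator. I would choose a bounded sequence $\{c_n\}$ with $\inf_n|\alpha_n+c_n|>0$ — for instance $c_n=0$ when $|\alpha_n|\geq1$ and $c_n=1-\alpha_n$ otherwise — and set $B=(V^*)^{-1}M_cV^*\in\B(\H)$ and $\Up(f,g)=\pin{Bf}{g}$. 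Then $S_{\xi,\eta}+B=(V^*)^{-1}M_{\alpha+c}V^*$ is boundedly invertible, so $0\in\rho(S_{\xi,\eta}+B)$ and, by Theorem \ref{th_rapp_risol}(ii), $\O_{\xi,\eta}+\Up$ is $0$-closed; hence $\O_{\xi,\eta}$ is solvable. Knowing solvability, Theorem \ref{th_rapp_risol}(ii) gives that $\O_{\xi,\eta}$ is $\lambda$-closed if and only if $\lambda\in\rho(S_{\xi,\eta})=\C\setminus\ol{\{\alpha_n\}}$, which is (iii); and (ii) is the special case $\lambda=0$, since $0\notin\ol{\{\alpha_n\}}$ is equivalent to $\inf_n|\alpha_n|>0$.
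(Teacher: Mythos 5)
Your part (i) is correct and is essentially the paper's argument (Riesz-basis property gives $\dom(C_\xi)=\H$, hence $D_\eta C_\xi=S_{\xi,\eta}$, and $\dom(D_\eta)=l_2\cap l_2(\alpha^2)$ fixes the domain). The gap is in your solvability step, and it is a genuine one: to conclude that $0\in\rho(S_{\xi,\eta}+B)$ implies that $\O_{\xi,\eta}+\Up$ is $0$-closed you invoke Theorem \ref{th_rapp_risol}(ii), but that theorem has as \emph{hypothesis} that $\O_{\xi,\eta}$ is solvable --- which is exactly what you are trying to prove. The argument is circular. Worse, the implication you are relying on (bounded invertibility of the associated operator $\Rightarrow$ $0$-closedness of the form) is simply false for a general q-closed form: $0$-closedness requires that \emph{every} continuous anti-linear functional on $\D_2[\nor_2]$ be of the form $g\mapsto(\O-0\,\iota)(f,g)$, whereas invertibility of the associated operator only produces the functionals $g\mapsto\pin{h}{g}$ with $h\in\H$, and the dual of $\D_2[\nor_2]$ is in general strictly larger than $\H$. (Concretely: take $\O(f,g)=\pin{f}{g}$ on $\H\times\D_2$, where $\D_2=\{g:\sum_n n^2|g_n|^2<\infty\}$ carries the norm $\n{g}_2=(\sum_n n^2|g_n|^2)^{1/2}$; the associated operator is $I$, yet the functional $\L(g)=\sum_n n^{1/4}\ol{g_n}$ is $\nor_2$-continuous and not representable by any $h\in\H$, so $\O$ is not $0$-closed.) Since your (ii) is deduced from (iii), both statements collapse with this step; this circularity is precisely why solvability must be obtained at the level of the form, not of the operator.

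The repair is to prove (ii) first and directly, as the paper does, via Theorem \ref{th_cns_0clos} (or Lemma \ref{lem_cns_0clos}): if $\O_{\xi,\eta}$ is $0$-closed then $\eta$ is a lower semi-frame, which forces $\inf_n|\alpha_n|>0$; conversely, if $\inf_n|\alpha_n|>0$ then $\eta$ is a lower semi-frame with $N(D_\eta)=\{0\}$, and since $C_\xi$ is a bijection onto $l_2$ ($\phi$ being a Riesz basis) the decomposition $R(C_\xi)\dotplus N(D_\eta)=l_2$ holds trivially, so the form is $0$-closed. With (ii) in hand, your perturbation sequence $\{c_n\}$ (which is identical to the paper's $\{\sigma_n\}$) does the job honestly: $\O_{\xi,\eta}+\Up$ is again a weighted form $\O_{\xi,\eta''}$ with $\eta''=\{(\alpha_n+c_n)\psi_n\}$, $\dom(\eta'')=\dom(\eta)$ because $\{c_n\}$ is bounded and $\psi$ is Bessel, and $\inf_n|\alpha_n+c_n|>0$, so (ii) gives $0$-closedness and hence solvability. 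Only \emph{after} that does Theorem \ref{th_rapp_risol}(ii) become available, and then your similarity computation $S_{\xi,\eta}=(V^*)^{-1}M_\alpha V^*$, $\rho(S_{\xi,\eta})=\C\setminus\ol{\{\alpha_n\}}$, is a legitimate and pleasant way to finish (iii); the paper instead avoids the operator altogether by applying (ii) to the shifted weights $\alpha_n-\lambda$.
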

\begin{proof}
	\begin{enumerate}[label={(\roman*)}]
		\item
		Since $\dom(C_\xi)=\H$ we have the equality $D_\eta C_\xi = S_{\xi,\eta}$. Moreover 
		\begin{align*}
		\dom(S_{\xi,\eta})=\dom(D_\eta C_\xi)=\left \{f\in \H: \sum_{n=1}^\infty |\alpha_n|^2|\pin{f}{\phi_n}|^2<\infty \right \}.
		\end{align*}	
		\item By Theorem \ref{th_cns_0clos} if $\O_{\xi,\eta}$ is $0$-closed, then $\eta$ is a lower semi-frame, i.e., $\inf_n |\alpha_n|>0$. Conversely, the condition $\inf_n |\alpha_n|>0$ ensures that $\eta$ is a lower semi-frame and $N(D_\eta)=\{0\}$. Taking into account that $C_\xi$ is bijective by \cite[Proposition 4.1]{Classif}, one has $R(C_\xi)\dotplus N(D_\eta)=l_2$ and $\O_{\xi,\eta}$ is $0$-closed by Theorem \ref{th_cns_0clos}.
		\item  
		Let 
		$$\xi'=\{\xi_1,\sigma_1\xi_1,\dots,\xi_n,\sigma_n\xi_n,\dots\} \text{ and } \eta'=\{\eta_1,\psi_1,\dots,\eta_n,\psi_n,\dots\},$$
		where $\sigma_n = -\alpha_n+1$ if $|\alpha_n|\leq 1$ and $\sigma_n=0$ if $|\alpha_n|> 1$. Therefore $\O_{\xi',\eta'}(f,g)=\sum_{n=1}^\infty (\alpha_n+\sigma_n) \pin{f}{\phi_n}\pin{\psi_n}{g}$ and $|\alpha_n+\sigma_n|\geq 1$ for all $n\in \N$. By the point (ii), $\O_{\xi',\eta'}$ is $0$-closed. Since $\O_{\xi',\eta'}=\O_{\xi,\eta}+\Upsilon$, where $\Upsilon$ is the bounded form $\Upsilon(f,g)=\sum_{n=1}^\infty \sigma_n \pin{f}{\phi_n}\pin{\psi_n}{g}$, $\O_{\xi,\eta}$ is solvable. In particular, if $\lambda \in \C$, taking $\sigma_n=-\lambda$ for all $n\in \N$ we recover that $\O_{\xi,\eta}$ is $\lambda$-closed if and only if  $\lambda \notin \ol{\{\alpha_n\}}$.		\qedhere
	\end{enumerate}
\end{proof}

\no Following \cite{Bag_Riesz}, we denote $S_{\xi,\eta}$ by $H_{\psi,\phi}^\alpha$. 
As a consequence of Theorem \ref{th_rapp_risol} we get another proof of Proposition 2.1 of \cite{Bag_Riesz}. That is, $H_{\psi,\phi}^{\alpha}$ is a densely defined closed operator and $(H_{\psi,\phi}^{\alpha})^*=H_{\phi,\psi}^{\ol{\alpha}}$. Furthermore, the resolvent set of $H_{\psi,\phi}^{\alpha}$ is the complement of $\ol{\{\alpha_n\}}$,  $ \rho(H_{\psi,\phi}^{\alpha})=\ol{\{\alpha_n\}}^c$.

\no Therefore, if $\inf_{n\in \N} |\alpha_n|>0$ we get the reconstruction formulas for $f\in \H$
\begin{align*}
f=\sum_{n=1}^\infty \alpha_n \pin{f}{{H_{\phi,\psi}^{\ol{\alpha}}}^{-1}\phi_n}\psi_n \;\;\text{ and }\;\; f=\sum_{n=1}^\infty \ol{\alpha_n} \pin{f}{{H_{\psi,\phi}^{\alpha}}^{-1}\psi_n}\phi_n.
\end{align*}


\subsection{Weighted Bessel sequences}

Let $\phi=\{\phi_n\},\psi=\{\psi_n\}$ be Bessel sequences and $\alpha=\{\alpha_n\}$ be a complex sequence. Define $\O^\alpha_{\phi,\psi}$ the sesquilinear form
\begin{equation*}
\O^\alpha_{\phi,\psi} (f,g)=\sum_{n=1}^\infty \alpha_n \pin{f}{\phi_n}\pin{\psi_n}{g}.
\end{equation*}

\no 
Putting $\xi=\{\ol{\alpha_n} \phi_n\}$ and $\eta=\{\psi_n\}$, we can consider $\O^\alpha_{\phi,\psi}$ as $\O_{\xi,\eta}$ on $\dom(\xi)\times \dom(\eta)$. We have $\dom(\eta)=\H$, $\dom(D_\eta)=\H$ and $D_\eta=C_\eta ^*$. 
By Theorem \ref{th_cns_0clos}, the operator associated to $\O_{\xi,\eta}$ on $\dom(\xi) \times \dom(\eta)$ is $D_\eta C_\xi$. As we previously said, $D_\eta C_\xi \subseteq S_{\xi,\eta}$ and Example \ref{exm_DC<S} shows that the converse is not true. \\
Assume that $\dim N(D_\eta)=\dim N(D_\psi)< \infty$. Then $D_\eta C_\xi = S_{\xi,\eta}$. Indeed, for $f\in \dom(S_{\xi,\eta})$ the series $\sum_{n=1}^\infty \alpha_n\pin{f}{\phi_n}\pin{\psi_n}{g} =\pin{S_{\xi,\eta}f}{g}$ is convergent. Since $\dim R(C_\eta)^\perp=\dim N(D_\eta) < \infty$, by Lemma \ref{lem_W^perp}, $\{\pin{f}{\xi_n}\}\in l_2$, i.e., $f\in \dom(C_\xi)$. In conclusion $f\in \dom(D_\eta C_\xi)$.

\section{Sequences as images of an ONB through operators}
\label{sec:Ve_n}


Let $\{e_n\}$ be an ONB and $V$ a densely defined operator of $\H$ such that $e_n\in \dom(V)$ for $n\in \N$. One could consider the sequence $\xi=\{\xi_n\}$ given by
\begin{equation}
\label{Ve_n}
\xi_n=V e_n,\qquad \forall n\in \N. 
\end{equation}
Actually the condition (\ref{Ve_n}) occurs for every sequence $\xi$. Indeed, for a fixed ONB $\{e_n\}$ an operator $V$ can defined on  span$(\{e_n\})$ such that $Ve_n = \xi_n$.
However, in general, the choice of the operator $V$ may not be uniquely determined for a fixed ONB $\{e_n\}$. \\
To establish further properties of a sequence defined by (\ref{Ve_n}) we need to consider the restriction $V_0$ of $V$ to span$(\{e_n\})$.

\begin{pro}
	The following statements hold.
	\begin{enumerate}[label=\emph{(\roman*)},ref={(\roman*)}]
		\item The analysis operator $C_\xi$ has domain $\dom(\xi)=\dom(V_0^*)$ and it is defined as $C_\xi f=\{\pin{V_0^*f}{e_n}\}$ for $f\in \dom(V_0^*)$.
		\item The analysis operator $C_\xi$ is densely defined if and only if $V_0$ is closable.
		\item \label{synth1}
		The synthesis operator $D_\xi$ acts as $D_\xi \{c_n\}=\sum_{n=1}^\infty c_n \xi_n $ on the domain 
		$$
		\dom(D_\xi)=\left \{\{c_n\}\in l_2: V_0 \left(\sum_{n=1}^k c_n e_n \right ) \text{ is convergent in } \H \right \}.
		$$	
	\end{enumerate}		
	Assume that $\dom(\xi)$ is dense, i.e., $V_0$ is closable. The following statements hold.
	\begin{enumerate}[label=\emph{(\roman*)}]
		\item[\emph{(iii')}]  The synthesis operator $D_\xi$ acts as $D_\xi \{c_n\}=\ol{V_0} (\sum_{n=1}^\infty c_n e_n) $ on the domain 
		$$
		\dom(D_\xi)=\left \{\{c_n\}\in l_2: \sum_{n=1}^k c_n e_n \text{ is convergent in } \dom(\ol{V_0})[\nor_{\ol{V_0}}] \right \}.
		$$	
		\item[\emph{(iv)}]  The adjoint $C_\xi^*$ of $C_\xi$ is defined as $C_\xi^* \{c_n\}=\ol{V_0} (\sum_{n=1}^\infty c_n e_n )$ on the domain 
		$$
		\dom(C_\xi^*)=\left \{\{c_n\}\in l_2: \sum_{n=1}^\infty c_n e_n \in \dom(\ol{V_0}) \right \}.
		$$
		\item[\emph{(v)}] The operator $S_\xi$ is defined as $S_\xi f = \ol{V_0}V_0^* f$ on 
		$$
		\dom(S_\xi)=\left \{f\in \H: \sum_{n=1}^k \pin{f}{\xi_n} e_n \text{ is convergent in } \dom(\ol{V_0})[\nor_{\ol{V_0}}] \right \}.
		$$
		\item[\emph{(vi)}] The operator $C_\xi^* C_\xi$ is $\ol{V_0}V_0^*=|V_0^*|^2$.
	\end{enumerate}
\end{pro}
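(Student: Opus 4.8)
The plan is to reduce the whole proposition to a single structural identity and then read off all six assertions from the calculus of adjoints and products of (possibly unbounded) operators. Let $W\colon \H\to l_2$ denote the coordinate isomorphism $Wx=\{\pin{x}{e_n}\}$; since $\{e_n\}$ is an ONB, $W$ is unitary, with $W^{-1}\{c_n\}=\sum_{n=1}^\infty c_n e_n$. The key claim is that
\[
C_\xi=W V_0^*,
\]
as operators, domains included. Granting this, parts (ii)--(vi) become essentially bookkeeping, because $W$ is bounded and everywhere defined and because $\ol{V_0}=V_0^{**}=(V_0^*)^*$ whenever $V_0$ is closable.

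First I would prove (i), which carries the real content of the identity $C_\xi=WV_0^*$. Since $\dom(V_0)=\mathrm{span}(\{e_n\})$ is dense, $V_0^*$ is well defined. For $g\in\H$, the functional $\phi\mapsto\pin{V_0\phi}{g}$ on $\dom(V_0)$ evaluated at $\phi=\sum_n a_n e_n$ equals $\sum_n a_n\pin{\xi_n}{g}$; by $l_2$-duality it is bounded in $\n{\phi}$ if and only if $\{\pin{g}{\xi_n}\}\in l_2$, i.e.\ $g\in\dom(\xi)$ (membership in $l_2$ being insensitive to conjugation). Hence $\dom(V_0^*)=\dom(\xi)=\dom(C_\xi)$. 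Testing $V_0^*g$ against $e_n$ gives $\pin{V_0^*g}{e_n}=\ol{\pin{e_n}{V_0^*g}}=\ol{\pin{V_0 e_n}{g}}=\pin{g}{\xi_n}=(C_\xi g)_n$, which is both the stated formula and the identity $C_\xi=WV_0^*$.

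From here the closable case follows formally. For (ii), $C_\xi$ is densely defined iff $\dom(V_0^*)$ is dense, iff $V_0$ is closable (the standard criterion that a densely defined operator has densely defined adjoint precisely when it is closable). Assuming this, $\ol{V_0}=(V_0^*)^*$, and then $C_\xi^*=(WV_0^*)^*=(V_0^*)^*W^*=\ol{V_0}\,W^{-1}$, using $(BA)^*=A^*B^*$ for the bounded everywhere-defined factor $B=W$; this is exactly (iv), since $\{c_n\}\in\dom(C_\xi^*)$ iff $W^{-1}\{c_n\}=\sum_n c_n e_n\in\dom(\ol{V_0})$, and then $C_\xi^*\{c_n\}=\ol{V_0}(\sum_n c_n e_n)$. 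For (iii) I would simply use linearity: $\sum_{n=1}^k c_n\xi_n=V_0(\sum_{n=1}^k c_ne_n)$, so $D_\xi\{c_n\}$ is the $\H$-limit of these, giving the stated domain by definition. To pass to (iii$'$), observe that for $\{c_n\}\in l_2$ the partial sums $\sum_{n=1}^k c_n e_n$ already converge in $\H$, so convergence in the graph norm $\nor_{\ol{V_0}}$ is equivalent to convergence of $V_0(\sum_{n=1}^k c_n e_n)$ in $\H$; by closedness of $\ol{V_0}$ the two descriptions of $\dom(D_\xi)$ coincide and $D_\xi\{c_n\}=\ol{V_0}(\sum_n c_n e_n)$.

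Finally, (v) uses $S_\xi=D_\xi C_\xi$ from Proposition \ref{pro_oper_1}(iii): for $f\in\dom(\xi)$ the reconstruction in the ONB gives $\sum_n\pin{f}{\xi_n}e_n=\sum_n\pin{V_0^*f}{e_n}e_n=V_0^*f$, so $f\in\dom(S_\xi)$ exactly when the partial sums $\sum_{n=1}^k\pin{f}{\xi_n}e_n$ converge in $\nor_{\ol{V_0}}$, i.e.\ $V_0^*f\in\dom(\ol{V_0})$, and then $S_\xi f=\ol{V_0}V_0^*f$. For (vi), composing the two factored forms gives $C_\xi^*C_\xi=\ol{V_0}W^*WV_0^*=\ol{V_0}V_0^*$ since $W^*W=I$, and this is $|V_0^*|^2$ by the definition of the modulus. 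I expect the main obstacle to be purely the domain bookkeeping: justifying the boundedness/duality step in (i), and above all verifying that the several a priori different domains (graph-norm convergence in (iii$'$) and (v), $\H$-convergence in (iii), and the membership conditions in (iv) and (vi)) actually coincide, which rests each time on the automatic $\H$-convergence of $\sum_n c_n e_n$ together with the closedness of $\ol{V_0}$.
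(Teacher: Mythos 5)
Your proposal is correct, and it proves everything the paper asserts; the difference is one of packaging. The paper's own proof is terse and item-by-item: (i) is disposed of by citing the analogous Proposition II.1 of the Bagarello--Inoue--Trapani paper, (ii) is noted to follow from (i), (iii) and (iii') are said to follow from the definition of $D_\xi$, (v) from $S_\xi = D_\xi C_\xi$, and only (iv) and (vi) get explicit computations --- for (iv) precisely the identity $\pin{\{c_n\}}{C_\xi f}_2=\pin{\sum_{n=1}^\infty c_n e_n}{V_0^* f}$, which is your factorization $C_\xi = W V_0^*$ written out against test vectors. You instead isolate that factorization as a single structural identity, prove (i) from scratch by the $l_2$-duality argument (rather than outsourcing it), and then obtain (iv) and (vi) formally from the adjoint calculus $(BA)^*=A^*B^*$ for bounded everywhere-defined $B$, together with $W^*W=I$ and $\ol{V_0}=(V_0^*)^*$. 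What your route buys is a self-contained and systematic proof in which the several domain identifications (in (iii'), (v), (vi)) are all reduced to the closedness of $\ol{V_0}$ and the automatic $\H$-convergence of $\sum_n c_n e_n$; what the paper's route buys is brevity, since the direct inner-product computation in (iv) avoids invoking (and implicitly reproves) the product-adjoint theorem. Your handling of the subtle distinction between $S_\xi$ (graph-norm convergence of partial sums, item (v)) and $C_\xi^*C_\xi=\ol{V_0}V_0^*$ (membership $V_0^*f\in\dom(\ol{V_0})$, item (vi)) is exactly right and matches the paper's intent.
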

\begin{proof}
	\begin{enumerate}[label={(\roman*)}]
		\item The proof is identical to that of \cite[Proposition II.1]{Bag_sesq}.
		\item It comes from point (i).
		\item[] Points (iii) and (iii') follow by the definition of $D_\xi$.
		\item[(iv)] Let $\{c_n\}\in l_2$. Then 
		\begin{align*}
			\pin{\{c_n\}}{C_\xi f}_2=\sum_{n=1}^\infty c_n \pin{\xi_n}{f}=\pin{\sum_{n=1}^\infty c_n e_n}{V_0^* f}.
		\end{align*}
		Therefore, $\{c_n\}\in \dom(C_\xi^*)$ if and only if $\sum_{n=1}^\infty c_n e_n \in \dom(\ol{V_0})$. Moreover $C_\xi^* \{c_n\}=\ol{V_0} (\sum_{n=1}^\infty c_n e_n )$.
		\item[(v)] It is a consequence of the relation $S_\xi=D_\xi C_\xi$.
		\item[(vi)] Let $f\in \H$. Then $f\in \dom(C_\xi^* C_\xi)$ if and only if $f\in \dom(V_0^*)$ and $V_0^* f=\sum_{n=1}^\infty \pin{f}{\xi_n} e_n \in \dom(\ol{V_0})$, i.e., $f\in \dom(\ol{V_0}V_0^*)$. \qedhere
	\end{enumerate}
\end{proof}

\no In \cite{Classif} some characterizations of sequences are given based on the operator $V_0$. Now assume that $\{e_n\}$ is an ONB, $V,Z$ are operators such that $e_n\in \dom(V)\cap\dom(Z)$ and let $\xi=\{\xi_n\}=\{V e_n\}$ and $\eta=\{\eta_n\}=\{Z e_n\}$. In the next theorem we study the sesquilinear form induced by these sequences. Again $V_0$ and $Z_0$ are the restrictions to span$(\{e_n\})$ of $V$ and $Z$, respectively.

\begin{theo}
	Let $\O_{\xi,\eta}$ be defined on $\dom(\xi)\times \dom(\eta)$. The following statements hold.
	\begin{enumerate}[label=\emph{(\roman*)}]
		\item $\O_{\xi,\eta}(f,g)=\pin{V_0^*f}{Z_0^*g}$ for all $f\in\dom(\xi),g\in \dom(\eta)$.
		\item Assume that $\dom(\eta)$ is dense, i.e., $Z_0$ is closable. The operator associated to $\O_{\xi,\eta}$ is $\ol{Z_0}V_0^*$.
		\item $\O_{\xi,\eta}$ is $0$-closed if and only if $V_0^*, Z_0^*$ are semi-bounded and $R(V_0^*)\dotplus R(Z_0^*)^\perp=\H$ (resp., $R(Z_0^*)\dotplus R(V_0^*)^\perp=\H$).
	\end{enumerate}
\end{theo}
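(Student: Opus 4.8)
The plan is to reduce the entire statement to the preceding proposition together with Theorem~\ref{th_cns_0clos}, exploiting the canonical unitary identification $U\colon \H\to l_2$, $Uh=\{\pin{h}{e_n}\}$, which is an isometric isomorphism because $\{e_n\}$ is an ONB and which, by point (i) of the proposition, satisfies $C_\xi f=U(V_0^*f)$ and $C_\eta g=U(Z_0^*g)$. For (i), I would start from the identity $\O_{\xi,\eta}(f,g)=\pin{C_\xi f}{C_\eta g}_2$, valid on $\dom(\xi)\times\dom(\eta)$, substitute $C_\xi f=\{\pin{V_0^*f}{e_n}\}$ and $C_\eta g=\{\pin{Z_0^*g}{e_n}\}$, and apply Parseval's identity for $\{e_n\}$ to get
\[
\pin{C_\xi f}{C_\eta g}_2=\sum_{n}\pin{V_0^*f}{e_n}\pin{e_n}{Z_0^*g}=\pin{V_0^*f}{Z_0^*g}.
\]

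For (ii), since $\dom(\eta)$ is dense, Theorem~\ref{th_cns_0clos} identifies the associated operator as $C_\eta^*C_\xi$, so it suffices to show $C_\eta^*C_\xi=\ol{Z_0}V_0^*$. Using $\sum_n\pin{V_0^*f}{e_n}e_n=V_0^*f$ together with the formula $C_\eta^*\{c_n\}=\ol{Z_0}(\sum_n c_ne_n)$ (point (iv) of the proposition applied to $\eta$), one obtains $C_\eta^*C_\xi f=\ol{Z_0}(V_0^*f)$ wherever both sides are defined. I would then check that the domains coincide: $f\in\dom(C_\eta^*C_\xi)$ means $f\in\dom(V_0^*)$ and $C_\xi f\in\dom(C_\eta^*)$, i.e.\ $V_0^*f\in\dom(\ol{Z_0})$, which is exactly $f\in\dom(\ol{Z_0}V_0^*)$.

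For (iii), I would run the three equivalent conditions of Theorem~\ref{th_cns_0clos} through $U$. By Parseval, $\n{C_\xi f}^2=\sum_n|\pin{V_0^*f}{e_n}|^2=\n{V_0^*f}^2$ for $f\in\dom(\xi)=\dom(V_0^*)$, while the frame sum diverges off this domain; hence the lower semi-frame inequality $A\n{f}^2\leq\sum_n|\pin{f}{\xi_n}|^2$ for all $f\in\H$ is equivalent to $\sqrt{A}\,\n{f}\leq\n{V_0^*f}$ on $\dom(V_0^*)$, that is, to semi-boundedness of $V_0^*$ (and likewise for $\eta$ and $Z_0^*$). For the splitting condition, the unitarity of $U$ gives $R(C_\xi)=U(R(V_0^*))$ and $R(C_\eta)=U(R(Z_0^*))$, and $U$ carries orthogonal complements to orthogonal complements and topological direct sums to topological direct sums; therefore $R(C_\xi)\dotplus R(C_\eta)^\perp=l_2$ is equivalent to $R(V_0^*)\dotplus R(Z_0^*)^\perp=\H$, and the ``resp.'' version corresponds to condition (c) of the theorem.

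The only point requiring care is the meaningfulness of the direct-sum decomposition, since $\dotplus$ is reserved for closed subspaces. This causes no trouble because, in the regime where the first condition holds, $V_0^*,Z_0^*$ are semi-bounded, so $C_\xi,C_\eta$ are injective with closed range (equivalently $R(V_0^*),R(Z_0^*)$ are closed); thus the decomposition is well posed precisely when it is being asserted, and $U$ transports it faithfully. No genuinely hard step arises: each item is a transcription of the corresponding statement for $C_\xi,C_\eta$ through the unitary $U$ and the formulas of the preceding proposition.
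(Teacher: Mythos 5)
Your proof is correct and takes essentially the same approach as the paper's: both obtain (i) by Parseval, and both reduce (iii) to Theorem \ref{th_cns_0clos} by translating the lower semi-frame bounds into semi-boundedness of $V_0^*,Z_0^*$ and transporting the subspace splittings between $l_2$ and $\H$ (you package this as the unitary $U h=\{\pin{h}{e_n}\}$, the paper as elementwise correspondences, with the same closed-range observation). Your route in (ii) through $C_\eta^*C_\xi$ and the formula for $C_\eta^*$ arrives at the same operator identity $\ol{Z_0}V_0^*$ that the paper draws immediately from (i), so there is no substantive difference.
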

\begin{proof}
	\begin{enumerate}[label={(\roman*)}]
		\item For all $f\in \dom(\xi)=\dom(V_0^*), g\in \dom(\eta)=\dom(Z_0^*)$ 
		$$
		\O_{\xi,\eta}(f,g)=\sum_{n=1}^\infty\pin{f}{\xi_n}\pin{\eta_n}{g}=\sum_{n=1}^\infty\pin{V_0^*f}{e_n}\pin{e_n}{Z_0^*g}=\pin{V_0^*f}{Z_0^*g}.
		$$
		\item It is an immediate consequence of the previous point.
		\item Assume that $\xi,\eta$ are lower semi-frames. Then $R(C_\xi)\dotplus R(C_\eta)^\perp = l_2$ if and only if $R(V_0^*)\dotplus R(Z_0^*)^\perp=\H$. Indeed, $R(V_0^*),R(Z_0^*)$ are closed and the assertion can be obtained by the following considerations 
	\begin{itemize}
		\item $\{d_n\}\in R(C_\xi)\cap R(C_\eta)^\perp$ if and only if $\sum_{n=1}^\infty d_n e_n \in R(V_0^*)\cap R({Z_0}^*)^\perp$;
		\item $\{d_n\}\in R(C_\xi)+ R(C_\eta)^\perp$ if and only if $\sum_{n=1}^\infty d_n e_n \in R(V_0^*)+ R({Z_0}^*)^\perp$.
	\end{itemize}
	The sesquilinear form $\O_{\xi,\eta}$ on $\dom(\xi)\times \dom(\eta)$ is $0$-closed, by Theorem \ref{th_cns_0clos}, if and only if $\xi,\eta$ are lower semi-frames and $R(C_\xi)\dotplus R(C_\eta)^\perp = l_2$, if and only if $V_0^*, Z_0^*$ are semi-bounded and $R(V_0^*)\dotplus R({Z_0}^*)^\perp=\H$.
	The proof is completed noting that, by \cite[Theorem 2.3]{Tang}, $R(V_0^*)\dotplus R({Z_0}^*)^\perp=\H$ is equivalent to $R(V_0^*)\dotplus R({Z_0}^*)^\perp=\H$ when $V_0^*, Z_0^*$ are semi-bounded.  \qedhere
	\end{enumerate}
\end{proof}

\begin{cor}
	Let $\xi=\{\xi_n\}$ be a sequence of $\H$ with $\xi_n=V e_n$, where $\{e_n\}$ is an ONB, $V$ is an operator of $\H$ and $e_n\in \dom(V)$ for all $n\in \N$. Denoting by $V_0$ the restriction of $V$ to span$(\{e_n\})$, then
\begin{enumerate}[label=\emph{(\roman*)}]
	\item $\O_\xi(f,g)=\pin{V_0^*f}{V_0^*g}$ for all $f,g\in\dom(\xi)$.  
	\item $\O_\xi$ is densely defined if and only if $V_0$ is closable. In this case the operator associated to $\xi$ is $\ol{V_0}V_0^*=|V_0^*|^2$.
\end{enumerate}
\end{cor}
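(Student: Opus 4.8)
The plan is to obtain both assertions as the specialization $\eta=\xi$ (equivalently $Z=V$, hence $Z_0=V_0$) of the theorem immediately preceding this corollary, supplemented by the proposition that describes the operators attached to a sequence of the form $\xi_n=Ve_n$. As a preliminary remark I would record that span$(\{e_n\})$ is dense because $\{e_n\}$ is an ONB, so $V_0$ is densely defined and $V_0^*$ is a well-defined closed operator; in particular $\dom(\xi)=\dom(V_0^*)$ by part (i) of that proposition. For statement (i) it then suffices to set $Z_0=V_0$ in the theorem's formula $\O_{\xi,\eta}(f,g)=\pin{V_0^*f}{Z_0^*g}$, which gives $\O_\xi(f,g)=\pin{V_0^*f}{V_0^*g}$ directly. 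If a self-contained computation is preferred, one uses $C_\xi f=\{\pin{V_0^*f}{e_n}\}$ to write $\pin{f}{\xi_n}=\pin{V_0^*f}{e_n}$ and applies Parseval's identity in the basis $\{e_n\}$ to the sum $\sum_n \pin{V_0^*f}{e_n}\pin{e_n}{V_0^*g}$.

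For the first half of statement (ii) I would note that $\O_\xi$ is defined on $\dom(\xi)\times\dom(\xi)$, so it is densely defined precisely when $\dom(\xi)=\dom(C_\xi)$ is dense; by part (ii) of the proposition this happens if and only if $V_0$ is closable, which is the claimed equivalence. For the identification of the associated operator under the hypothesis that $V_0$ is closable (so $\dom(\eta)=\dom(\xi)$ is dense), I would specialize part (ii) of the theorem to obtain the associated operator $\ol{Z_0}V_0^*=\ol{V_0}V_0^*$. Alternatively, one can argue directly through Section \ref{sec:1seq}: the operator associated to $\O_\xi$ is $T_\xi=C_\xi^*C_\xi$, and part (vi) of the proposition identifies $C_\xi^*C_\xi$ with $\ol{V_0}V_0^*=|V_0^*|^2$.

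The only point deserving care—and the place I would expect scrutiny—is the operator identity $\ol{V_0}V_0^*=|V_0^*|^2$, since it involves domains of unbounded operators rather than a mere formal manipulation. Here closability of $V_0$ is exactly what is needed: it yields $(V_0^*)^*=\ol{V_0}$, whence $|V_0^*|^2=(V_0^*)^*V_0^*=\ol{V_0}V_0^*$ with the correct domain. As this is precisely the content of part (vi) of the proposition, no independent argument is required, and the corollary follows. The proof is therefore essentially a transcription of the theorem and proposition to the diagonal case $\xi=\eta$, with the closability hypothesis carrying the full weight of the domain bookkeeping.
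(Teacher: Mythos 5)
Your proposal is correct and follows exactly the route the paper intends: the corollary is stated without proof as an immediate specialization of the preceding theorem (with $Z=V$, hence $Z_0=V_0$) together with parts (i), (ii) and (vi) of the proposition on the operators $C_\xi$, $C_\xi^*$ and $C_\xi^*C_\xi$ for $\xi_n=Ve_n$. Your extra remark that closability of $V_0$ gives $(V_0^*)^*=\ol{V_0}$, and hence $|V_0^*|^2=\ol{V_0}V_0^*$ with matching domains, is precisely the content of part (vi), so nothing is missing.
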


\section*{Acknowledgments}

The author thanks prof. Trapani for suggesting the problem and for many useful conversations. This work has been done in the framework of the project “Alcuni aspetti di teoria spettrale di operatori e di algebre; frames in spazi di Hilbert rigged” 2018, of the “National Group for Mathematical Analysis, Probability and
their Applications” (GNAMPA – INdAM).

\vspace*{0.5cm}
\begin{center}
\textsc{Rosario Corso, Dipartimento di Matematica e Informatica} \\
\textsc{Università degli Studi di Palermo, I-90123 Palermo, Italy} \\
{\it E-mail address}: {\bf rosario.corso@studium.unict.it}
\end{center}

\end{document}